\newtheorem{mainthm}{Theorem}
\newtheorem{theorem}{Theorem}[section]
\newtheorem*{theorem*}{Theorem}
\newtheorem{corollary}[theorem]{Corollary}
\newtheorem{lemma}[theorem]{Lemma}
\newtheorem{proposition}[theorem]{Proposition}
\newtheorem*{proposition*}{Proposition}
\newtheorem*{conjecture*}{Conjecture}
\theoremstyle{definition}
\newtheorem{definition}[theorem]{Definition}
\newtheorem{remark}[theorem]{Remark}
\numberwithin{equation}{section}
\def\bN {\mathbb{N}}
\def\bP {\mathbb{P}}
\def\bR {\mathbb{R}}
\def\bS {\mathbb{S}}
\def\bZ {\mathbb{Z}}
\def\cB {\mathcal{B}}
\def\cE {\mathcal{E}}
\def\cI {\mathcal{I}}
\def\cM {\mathcal{M}}
\def\cR {\mathcal{R}}
\def\cY {\mathcal{Y}}
\def\scrL{\mathscr{L}}
\newcommand{\tx}[1]{\mathrm{#1}}
\newcommand{\wt}[1]{\widetilde{#1}}
\newcommand{\floor}[1]{\lfloor{#1}\rfloor}
\newcommand{\sh}[1]{#1^\sharp}
\newcommand{\eee}{\mathrm e}
\newcommand{\ud}{\mathrm{\,d}}
\newcommand{\vd}{\mathrm{d}}
\newcommand{\dd}[1]{{\frac{\vd}{\vd{#1}}}}
\title[Statistical mechanics of the wave maps equation in dimension $1+1$]{Statistical mechanics of the wave maps equation \\ in dimension $1+1$}
\author{Zdzis{\l}aw Brze\'zniak}
\address{Department of Mathematics, The University of York, Heslington, York, YO105DD,
UK}
\email{zdzislaw.brzezniak@york.ac.uk}
\author{Jacek Jendrej}
\address{CNRS \& LAGA, Universit\'e Sorbonne Paris Nord, 99 av Jean-Baptiste Cl\'ement, 93430 Villetaneuse, France}
\email{jendrej@math.univ-paris13.fr}
\begin{document}

\begin{abstract}
We study wave maps with values in $\bS^d$, defined on the future light cone $\{|x| \leq t\} \subset \bR^{1+1}$,
with prescribed data at the boundary $\{|x| = t\}$.
Based on the work of Keel and Tao, we prove that the problem is well-posed for locally absolutely continuous boundary data.
We design a discrete version of the problem and prove that for every absolutely continuous boundary data,
the sequence of solutions of the discretised problem converges to the corresponding continuous wave map
as the mesh size tends to $0$.

Next, we consider the boundary data given by the $\bS^d$-valued Brownian motion.
We prove that the sequence of solutions of the discretised problems has an accumulation point
for the topology of locally uniform convergence.
We argue that the resulting random field can be interpreted as the wave-map evolution
corresponding to the initial data given by the Gibbs distribution.
\end{abstract}

\keywords{wave maps; Gibbs measure; Brownian motion}
\subjclass[2010]{35L71, 58D20, 37K06}

\thanks{J. Jendrej is supported by ANR-18-CE40-0028 project ESSED. He thanks Charles Collot
for discussions about evolution PDEs with random initial data.}

\maketitle
\section{Introduction}
\label{sec:intro}

\subsection{Setting of the problem}
\label{ssec:setting}

We say that a smooth map $\psi: \bR^{1+1} \supset \Pi \to \bS^d \subset \bR^{d+1}$ is a \emph{wave map} on an open subset of space-time $\Pi$ if there exists a function $\mu: \Pi \to \bR$ such that
\begin{equation}
\label{eq:wm}
\partial_t^2 \psi(t, x) - \partial_x^2\psi(t, x) = \mu(t, x)\psi(t, x), \qquad\text{for all }(t, x) \in \Pi.
\end{equation}
This equation is obtained as the Euler-Lagrange equation for the action functional
\begin{equation}
\scrL(\psi, \partial_t \psi) := \iint \Big(\frac 12 |\partial_t \psi|^2 - \frac 12 |\partial_x \psi|^2\Big)\ud x \ud t
\end{equation}
under the constraints $\psi(t, x) \in \bS^d$.
Since the Lagrangian has exactly the same form as in the case of the usual wave equation,
\eqref{eq:wm} is a natural analog of the linear wave equation in the nonlinear, geometric setting.
For a discussion of the initial value problem for \eqref{eq:wm},
in particular a proof of global well-posedness for initial data of finite energy, we refer to \cite[Chapter 7]{ShSt00}.

Our goal is to study the solutions of \eqref{eq:wm} in the context of Statistical Mechanics.
Recall that to a Hamiltonian system in a phase space of finite dimension
\begin{equation}
\dot q = \partial_p H, \qquad \dot p = -\partial_q H, \qquad (q, p) \in \cM,
\end{equation}
one associates the Gibbs distribution at temperature $T = 1/(k_B\beta)$,
which is a probability distribution on $\cM$ whose density is
\begin{equation}
\rho_T(q, p) := \frac{1}{Z_T}\exp(-\beta H(q, p)),
\end{equation}
where $k_B$ is the Boltzmann constant and $Z_T := \int \exp(-\beta H(q, p))\ud q\ud p$ is the normalising constant.
It is the probability distribution of the position and momentum in the \emph{canonical ensemble},
modelising a system of a large number of non-interacting particles,
in contact with a heat reservoir at temperature $T$.

By analogy, we can consider the infinite-dimensional Hamiltonian system \eqref{eq:wm}.
Proceeding in a non-rigorous way, the Hamiltonian is given by
\begin{equation}
H(\psi, \partial_t \psi) = \partial_t\psi\frac{\partial \scrL}{\partial(\partial_t\psi)} - \scrL = \int_{-\infty}^\infty \Big(\frac 12 |\partial_t \psi|^2 + \frac 12 |\partial_x \psi|^2\Big)\ud x.
\end{equation}
Hence, keeping in mind that $\psi$ has values in $\bS^d$, the Gibbs distribution should be proportional to
\begin{equation}
\label{eq:gibbs}
\begin{aligned}
(\psi, \partial_t\psi) &\mapsto \exp\bigg({-}\beta\int_{-\infty}^\infty \Big(\frac 12 |\partial_t \psi|^2 + \frac 12 |\partial_x \psi|^2\Big)\ud x\bigg) \\
&= \exp\bigg({-}\beta\int_{-\infty}^\infty \frac 12 |\partial_t \psi|^2 \ud x\bigg)\exp\bigg({-}\beta\int_{-\infty}^\infty \frac 12 |\partial_x \psi|^2 \ud x\bigg).
\end{aligned}
\end{equation}
Thus, $\psi$ is a sphere-valued Brownian motion, and $\partial_t \psi$ is a white noise along $\psi$,
meaning that at each $x$, $\partial_t \psi(x)$ is a normally distributed vector in the plane tangent to $\bS^d$ at $\psi(x)$,
and $\partial_t\psi(x)$ is independent of $\partial_t \psi(y)$ whenever $x \neq y$.

The questions which arise are:
\begin{itemize}
\item Can one rigorously define the Gibbs distribution?
\item Is it possible to solve \eqref{eq:wm} for initial data given by the Gibbs distribution?
\end{itemize}

\subsection{Main results}
\label{ssec:results}

Our main goal is to contribute to the second question above.
We avoid dealing with the first question by adopting the perspective of McKean~\cite{McKean-CMP},
which is to construct a random field $\bR^{1+1} \to \bS^d$,
whose marginal distributions at any fixed time correspond to the Gibbs distribution,
and which is a solution of the equation in a sense to be made precise.
We rigorously construct such a random field, and then non-rigorously argue that
the marginal distributions are as expected.

McKean's approach can be contrasted with Bourgain's ``individual trajectory'' approach
of constructing a \emph{deterministic} evolution for all initial data in a set of full measure,
see the comparison of the two viewpoints in \cite[Section 1]{McKean-CMP}.

It is convenient to introduce the \emph{null coordinates} $(u, v) = (t + x, t - x)$.
If we set $\phi(u, v) := \psi(t, x)$, then equation \eqref{eq:wm} reads
\begin{equation}
\label{eq:wm-uv}
\partial_u\partial_v \phi(u, v) = \lambda(u, v)\phi(u, v), \qquad\text{for all }(u, v) \in \Sigma,
\end{equation}
with $\Sigma$ being the image of $\Pi$ under the change of coordinates and $\lambda(u, v) := \frac 14 \mu(t, x)$.
Applying $\partial_u\partial_v$ to both sides of the relation $\phi\cdot\phi = 1$, we have
$\partial_u\partial_v \phi\cdot \phi = -\partial_u \phi\cdot \partial_v \phi$, thus \eqref{eq:wm-uv}
can be rewritten as
\begin{equation}
\label{eq:wm-null}
\partial_u\partial_v\phi(u, v) = -(\partial_u\phi(u, v)\cdot\partial_v\phi(u, v))\phi(u, v).
\end{equation}

We are interested in the
\emph{characteristic Cauchy problem}
\begin{equation}
\label{eq:wm-cauchy}
\begin{gathered}
\partial_u\partial_v \phi(u, v) = -(\partial_u\phi(u, v)\cdot\partial_v\phi(u, v))\phi(u, v), \qquad\text{for all }(u, v) \in (0, \infty)^2, \\
\phi(u, 0) = \phi_+(u), \quad \phi(0, v) = \phi_-(v), \qquad\text{for all }u, v \in [0, \infty),
\end{gathered}
\end{equation}
where the boundary data $\phi_\pm$ (which will always be assumed to be continuous) satisfy $\phi_-(0) = \phi_+(0)$.
The characteristic Cauchy problem can be formulated in the same way in any rectangle
$[u_1, u_2] \times [v_1, v_2] \subset \bR^2$.

It is not difficult to guess, see Section~\ref{ssec:gibbs}, that the boundary data corresponding to the initial data having the Gibbs distribution,
should be given by two independent Brownian motions starting from the same, randomly chosen point on $\bS^d$.
Thus, let us temporarily forget about the Gibbs distribution and ask about the possibility of solving
\eqref{eq:wm-cauchy} for $\phi_+$ and $\phi_-$ being two independent Brownian motions with $\phi_+(0) = \phi_-(0)$
chosen uniformly on $\bS^d$.

At the level of regularity $C^{\frac 12-}$, it is unclear how to specify the notion of solution of \eqref{eq:wm-cauchy}.
Our approach is to define a solution of \eqref{eq:wm-cauchy} as any locally uniform limit of a sequence of solutions of certain \emph{regularisations} of \eqref{eq:wm-cauchy}.
These regularised problems are in fact given by a specific \emph{numerical scheme} for solving \eqref{eq:wm-cauchy},
with the mesh size converging to $0$. We call $\Phi_N(\phi_+, \phi_-)$ the solution of the regularised problem
with mesh size $2^{-N}$, and boundary data $\phi_+$ and $\phi_-$, see Definition~\ref{def:phiN-def}.
The main result of Section~\ref{sec:random} can be stated as follows, see Theorem~\ref{thm:main} for a rigorous formulation.
\begin{mainthm}
\label{thm:1}
Let $\phi_+$ and $\phi_-$ be independent Brownian motions with $\phi_+(0) = \phi_-(0)$
chosen uniformly on $\bS^d$. There exists a sequence $N_k \to \infty$ such that $\Phi_{N_k}(\phi_+, \phi_-)$
converges in distribution to a continuous random field $\phi: [0, \infty)^2 \to \bS^d$.
This random field $\phi$ is invariant by time-like translations of the space-time and has the same modulus of continuity as the Brownian motion.
\end{mainthm}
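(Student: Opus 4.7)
The overall strategy is to prove tightness of the family $\{\Phi_N(\phi_+, \phi_-)\}_{N \geq 1}$ in $C([0,\infty)^2, \bS^d)$ with the topology of locally uniform convergence, extract a subsequential weak limit via Prokhorov's theorem, and verify the stated properties on any such limit. A diagonal extraction reduces the tightness statement to each fixed rectangle $[0,U] \times [0,V]$.

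The quantitative heart of the argument is a uniform-in-$N$ modulus of continuity estimate: for every $U, V > 0$ one should prove that
\begin{equation*}
\bigl|\Phi_N(\phi_+, \phi_-)(u,v) - \Phi_N(\phi_+, \phi_-)(u',v')\bigr| \leq C\bigl(\omega(|u-u'|) + \omega(|v-v'|)\bigr)
\end{equation*}
for all $(u,v), (u',v') \in [0,U] \times [0,V]$ at distance at most $h_0$, where $\omega(h) = \sqrt{h \log(1/h)}$ is the Lévy modulus and $C, h_0$ are random but with law independent of $N$. The key structural feature to exploit is that along each discrete null line $\{v = v_j\}$ the increments of $\Phi_N$ are controlled by the increments of $\phi_+$ on the corresponding $u$-interval, and symmetrically for lines $\{u = u_i\}$ and $\phi_-$. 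This is the discrete shadow of the fact that in the continuous setting the null decomposition of the wave map equation propagates the regularity of the characteristic data in each direction separately. Combined with the classical Lévy--Ciesielski estimate for the modulus of continuity of $\bS^d$-valued Brownian motion, this pathwise comparison yields the required bound with constants uniform in $N$.

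Granted the modulus estimate, Arzel\`a--Ascoli confines the restriction of $\Phi_N(\phi_+, \phi_-)$ to $[0,U]\times[0,V]$ to a compact subset of $C([0,U]\times[0,V], \bS^d)$ with probability arbitrarily close to one, which gives tightness; Prokhorov then produces a subsequence $N_k \to \infty$ along which $\Phi_{N_k}(\phi_+, \phi_-)$ converges in distribution to a continuous random field $\phi$. The announced modulus of continuity passes to the limit by lower semicontinuity under locally uniform convergence. For time-like translation invariance I would argue in the limit: first check that $\Phi_N$ enjoys an approximate shift-equivariance whose defect vanishes as $2^{-N} \to 0$; then use that, by the Markov property and rotation invariance of $\bS^d$-valued Brownian motion, the boundary traces of the shifted limit field along the lines $\{v = b,\, u \geq a\}$ and $\{u = a,\, v \geq b\}$ have the same joint distribution as the original data $(\phi_+, \phi_-)$.

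The single genuinely delicate step is the uniform modulus estimate. Since Brownian paths on $\bS^d$ are only $C^{1/2-}$, while the continuous well-posedness theory from Keel and Tao requires locally absolutely continuous data, no purely deterministic estimate suffices: the argument must blend the algebraic structure of the scheme---presumably a discrete conservation or near-isometry along characteristics---with Gaussian or martingale inequalities for the boundary increments. The main challenge is to obtain constants that do not blow up as $N \to \infty$, because a naive Gronwall iteration across the $2^N \times 2^N$ grid would accumulate losses that are fatal at Brownian regularity; a subtler structural mechanism, exploiting the sphere-valued geometry of the scheme, is needed to keep the constants under control.
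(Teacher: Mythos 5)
Your high-level skeleton (uniform-in-$N$ L\'evy modulus estimate, Arzel\`a--Ascoli compactness, Prokhorov, properties passed to the subsequential limit) is exactly the one used in the paper, and you correctly isolate the modulus estimate as the delicate step. But the mechanism you propose for that step does not work, and the ingredient you flag as missing (``a subtler structural mechanism, exploiting the sphere-valued geometry of the scheme'') is precisely what you would need to supply. The pathwise statement that is true for the scheme $Y(m+1,n+1)=\cR_{Y(m+1,n)+Y(m,n+1)}Y(m,n)$ is that \emph{single-step} increment magnitudes are transported exactly: since $\cR_{P+Q}$ is a linear isometry exchanging $P$ and $Q$, one gets $\delta_n Y(m+1,n)=-\cR_{Y(m+1,n)+Y(m,n+1)}\,\delta_n Y(m,n)$, hence $|\delta_n Y(m+1,n)|=|\delta_n Y(m,n)|$, and summing gives the discrete conservation laws. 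This controls $|Y(m',n)-Y(m,n)|$ for $|m'-m|>1$ only through the triangle inequality, i.e.\ by the total variation of $\phi_+$ over the corresponding interval, which at mesh $2^{-N}$ is of order $2^{N/2}|u'-u|$ for Brownian data and blows up as $N\to\infty$. There is no pathwise comparison between multi-scale increments of the interior field and increments of the boundary data, so the ``pathwise comparison plus L\'evy--Ciesielski for the boundary'' route collapses exactly where you need it.

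The missing idea is distributional, not pathwise: the heat kernel on $\bS^d$ satisfies $p(t,P,\cR_{P+Q}S)\,p(t,\cR_{P+Q}S,Q)=p(t,P,S)\,p(t,S,Q)$, because $\cR_{P+Q}$ is an isometry swapping $P$ and $Q$. Consequently the discrete evolution maps the boundary heat Markov chain to a heat Markov chain with the same parameter along \emph{every} space-like staircase path through the grid (Lemma~\ref{lem:reflections}). This makes the joint law of $\big(Y(j2^M,k2^M),Y((j+1)2^M,k2^M)\big)$ exactly a heat-kernel pair at time $2^{M-N}$ for every dyadic scale $M$, so Gaussian tail bounds for the heat kernel combined with a L\'evy-type dyadic chaining give the modulus estimate with constants uniform in $N$ --- this is how the paper avoids the Gronwall-type accumulation across the grid that you rightly identify as fatal. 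The same lemma is what your final step silently assumes: the claim that the traces of the shifted field along $\{v=b,\,u\ge a\}$ and $\{u=a,\,v\ge b\}$ are again distributed as two independent Brownian motions does not follow from the Markov property and rotation invariance of the boundary data alone; it is exactly the invariance statement above, and without it neither the translation invariance nor the tightness is established.
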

We also provide a non-rigorous argument that the marginal distribution for fixed time is the Gibbs distribution,
see Section~\ref{ssec:gibbs}.

The purpose of Section~\ref{sec:discrete} is to justify our notion of solution of \eqref{eq:wm-cauchy}.
By the work of Keel and Tao~\cite{Keel+Tao_1998}, the problem \eqref{eq:wm-cauchy}
is well-posed for absolutely continuous data.
We prove that in this case, our notion of solution coincides with the one obtained from \cite{Keel+Tao_1998}.
\begin{mainthm}
\label{thm:2}
If $\phi_+, \phi_-: [0, \infty) \to \bS^d$ are locally absolutely continuous with $\phi_+(0) = \phi_-(0)$,
then the sequence $\Phi_N(\phi_+, \phi_-)$ converges locally uniformly to the solution of \eqref{eq:wm-cauchy}.
\end{mainthm}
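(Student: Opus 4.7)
The plan is to combine uniform a priori bounds on the discrete approximants $\phi_N := \Phi_N(\phi_+,\phi_-)$ with the uniqueness part of the Keel--Tao well-posedness theorem. A convenient geometric reformulation of \eqref{eq:wm-cauchy} is that the PDE is equivalent to
\[
\phi_i\,\partial_u\partial_v\phi_j - \phi_j\,\partial_u\partial_v\phi_i = 0 \quad (1\le i,j\le d+1), \qquad |\phi|^2 \equiv 1,
\]
expressing the fact that $\partial_u\partial_v\phi$ is parallel to $\phi$. The key identity is the null-form decomposition
\[
\phi_i\,\partial_u\partial_v\phi_j - \phi_j\,\partial_u\partial_v\phi_i = \partial_v\bigl(\phi_i\partial_u\phi_j - \phi_j\partial_u\phi_i\bigr) - \bigl(\partial_v\phi_i\,\partial_u\phi_j - \partial_v\phi_j\,\partial_u\phi_i\bigr),
\]
which rewrites the nonlinearity as a divergence plus a compensated-compactness quadratic.

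The first step is to obtain uniform a priori estimates for $\phi_N$ on any bounded rectangle $[0,U]\times[0,V]$. For the continuous equation the sphere constraint yields the pointwise conservation $|\partial_u\phi(u,v)| = |\phi_+'(u)|$ and $|\partial_v\phi(u,v)| = |\phi_-'(v)|$, which follows from $\partial_v|\partial_u\phi|^2 = 2\partial_u\phi\cdot\partial_u\partial_v\phi$ and $\phi\cdot\partial_u\phi = 0$. A discrete analog, proved by induction over the dyadic grid from the definition of $\Phi_N$, should control $|\partial_u\phi_N|$ and $|\partial_v\phi_N|$ by discretisations of $|\phi_+'|$ and $|\phi_-'|$ respectively. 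Together with $|\phi_N|\equiv 1$, this gives equicontinuity of $\{\phi_N\}$ with modulus of continuity governed by the $L^1$-modulus of continuity of $\phi_\pm'$, which is finite by local absolute continuity. Arzel\`a--Ascoli then provides a subsequence $\phi_{N_k}$ converging locally uniformly to a continuous $\phi_\infty:[0,\infty)^2\to\bS^d$, with discrete derivatives converging weakly-$*$ as Radon measures.

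The main obstacle is to pass to the limit in the nonlinearity, which I would do by testing the null-form identity against smooth cut-offs. The divergence term $\partial_v(\phi_i\partial_u\phi_j - \phi_j\partial_u\phi_i)$ presents no difficulty: a single integration by parts reduces it to a pairing of the uniformly convergent sequence $\phi_N$ with the weakly-$*$ convergent $\partial_u\phi_N$. The Jacobian $\partial_v\phi_N^i\,\partial_u\phi_N^j - \partial_v\phi_N^j\,\partial_u\phi_N^i$ is the classical null form; here the conservation of moduli effectively reduces $\partial_u\phi_N(u,v)$ to a rotation in the tangent plane at $\phi_N(u,v)$ of $\phi_+'(u)$ and $\partial_v\phi_N(u,v)$ to one of $\phi_-'(v)$, which gives the product a tensor-product structure whose weak limit is controlled by the uniform convergence of $\phi_N$. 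Once $\phi_\infty$ is identified as a solution with the prescribed boundary data, the Keel--Tao uniqueness result forces $\phi_\infty$ to coincide with the unique continuous solution of \eqref{eq:wm-cauchy}; since every subsequence has a further subsequence converging to this same limit, the entire sequence $\Phi_N(\phi_+,\phi_-)$ converges locally uniformly. The delicate point is verifying that the discrete scheme reproduces the null structure accurately enough for the compensated-compactness argument to close.
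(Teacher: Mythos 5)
Your strategy runs in the opposite direction from the paper's: you want compactness of the approximants plus identification of the limit as a weak solution plus uniqueness, whereas the paper shows that the restriction of the exact continuous solution to the grid is an \emph{approximate discrete} wave map with $\ell^1$-small external forcing (Lemma~\ref{lem:cell-sol} and the proof of Theorem~\ref{thm:scheme-conv}) and then invokes a discrete long-time stability estimate (Lemma~\ref{lem:l-time-pertb}). The first half of your argument is sound and even cleaner than you state: since $\cR_{P+Q}$ is an isometry exchanging $P$ and $Q$ (see \eqref{eq:R-exchange}), the scheme conserves the increments exactly, $|\delta_m Y(m,n)|=|\delta Y_+(m)|$ and $|\delta_n Y(m,n)|=|\delta Y_-(n)|$ for all $n$, resp.\ $m$; together with absolute continuity of $\phi_\pm$ this gives equicontinuity and equiintegrability of the discrete derivatives, hence a locally uniformly convergent subsequence with derivatives converging weakly in $L^1$. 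The subsequence-to-full-sequence reduction at the end is also standard once uniqueness of the limit is known.

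The genuine gap is the limit passage in the nonlinearity. The two factors of the null form $\partial_v\phi_N^i\partial_u\phi_N^j-\partial_v\phi_N^j\partial_u\phi_N^i$ are bounded only in $L^1$ (with the mixed $L^1_uL^\infty_v$ and $L^\infty_uL^1_v$ structure), and weak $L^1$ convergence of each factor does not determine the weak limit of the product; the div--curl lemma requires $L^2$-type bounds and $H^{-1}$-compactness, neither of which is available at $W^{1,1}$ regularity. Your proposed mechanism --- that the conserved moduli give the product a ``tensor-product structure whose weak limit is controlled by the uniform convergence of $\phi_N$'' --- controls only the \emph{moduli} $|\partial_u\phi_N(u,v)|\lesssim|\phi_+'(u)|$ and $|\partial_v\phi_N(u,v)|\lesssim|\phi_-'(v)|$; the \emph{directions} are rotated by an amount depending on the solution itself, and oscillation of the directions can produce a defect in the weak limit of the scalar product (if $\partial_u\phi_N=a(u)e_N$ and $\partial_v\phi_N=b(v)\tilde e_N$ with $|e_N|=|\tilde e_N|=1$, the weak limit of $a b\, e_N\cdot\tilde e_N$ need not be $a b$ times the product of the weak limits). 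Ruling this out amounts to proving strong $L^1$ convergence of the derivatives, which is essentially the quantitative comparison with the exact solution that the paper carries out and that your route is trying to bypass. A secondary issue: even granting the limit passage, you would identify $\phi_\infty$ only as a distributional solution of the antisymmetrised equation, whereas the uniqueness of Theorem~\ref{thm:keel-tao} is stated for solutions in the sense of Definition~\ref{def:strong}, which requires $\partial_u\phi\cdot\partial_v\phi\in L^1$ and the Duhamel identity \eqref{eq:strong}; upgrading from the former to the latter needs a further argument that your sketch does not supply.
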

Sections~\ref{sec:discrete} and \ref{sec:random} are logically independent, since the Brownian motion
is not absolutely continuous.
In Section~\ref{sec:discrete} we hope to convince the reader that our notion
of solution is reasonable, but the obtained results are not directly used in Section~\ref{sec:random}.

\subsection{Related works}
\label{ssec:biblio}
The study of Gibbs measures for infinite dimensional Hamiltonian systems was initiated
by Lebowitz, Rose and Speer~\cite{LRS-88} in the context of nonlinear Schr\"odinger equations.
Bourgain~\cite{Bourgain-94} consider the question of almost sure (with respect to the Gibbs measure)
global well-posedness for the nonlinear Schr\"odinger equation on the circle,
introducing the idea that the invariance of the Gibbs measures can play a similar role as a conservation law
in a proof of global well-posedness.
McKean and Vaninsky \cite{McKean-Vaninsky-94}, and Zhidkov~\cite{Zhidkov-94} constructed the Gibbs measures
and proved their invariance for certain nonlinear wave equations.

In the recent years, extensive literature was devoted to the use of probabilistic techniques,
most often combined with modern Fourier Analysis,
in the study of the questions of local well-posedness for rough initial data,
as well as global well-posedness for supercritical equations.
Among the notable contributions, we mention the breakthrough results of Burq and Tzvetkov \cite{BT-08-1, BT-08-2}
on supercritical wave equations, the recent theory of random tensors of Deng, Nahmod and Yue~\cite{Deng-22},
global well-posedness results for energy-critical equations by Bringmann~\cite{Bringmann},
Kenig and Mendelson~\cite{KM-19}, Krieger, L{\"u}hrmann and Staffilani~\cite{KLS-20}.
Other important advances include \cite{GKO-18, KMV-20, LM-14, CS-15}, and we would like to refer
the reader to these works for a fuller bibliography.

Recently, Bringmann, L\"uhrmann and Staffilani~\cite{BLS-21}
gave a rigorous definition of the Gibbs distribution for \eqref{eq:wm} and
proved local existence of solutions with probability one (in fact, in the more general setting of any
compact target manifold instead of $\bS^d$).
For the convenience of the reader, we state here Theorem~1.3 from \cite{BLS-21}
(we refer to the original paper for the precise meaning of all the objects playing a role in the statement).
\begin{theorem*}[Bringmann, L{\"u}hrmann and Staffilani, 2021]
Fix $s < \frac 12$. Let $B: \bR \to \cM$ be the Brownian path, let $V: \bR \to T\cM$ be the white noise along $B$,
and let $(B^\epsilon)_{\epsilon > 0}$ and $(V^\epsilon)_{\epsilon > 0}$ be their smooth approximations.
Then, for all $\tau > 0$ and $R\geq 1$, there exists an event $\cE(\tau, R)$ such that the following two properties
hold:
\begin{enumerate}[(i)]
\item  (``High''-probability) We have that
\begin{equation}
\bP(\cE(\tau, R)) \geq 1 - CR\exp({-}c\tau^{-c}),
\end{equation}
where $C = C(\cM)$ and $c > 0$ are constants.
\item (local well-posedness) On the event $\cE(\tau, R)$, the smooth global solutions $\psi^\epsilon$ of \eqref{eq:wm}
with initial data $(B^\epsilon, V^\epsilon)$ converge in $(C_t^0C_x^s \cap C_t^1C_x^{s-1})([{-}\tau, \tau]\times [-R, R] \to \cM)$.
\end{enumerate}
\end{theorem*}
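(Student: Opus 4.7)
The plan is to follow a probabilistic Da Prato--Debussche type decomposition adapted to the geometric wave-maps nonlinearity: write $\psi^\epsilon = \psi^\epsilon\lin + w^\epsilon$, where $\psi^\epsilon\lin$ is the linear wave evolution of the smoothed data $(B^\epsilon, V^\epsilon)$, and reduce the statement to a deterministic fixed-point problem for the smoother remainder $w^\epsilon$, valid on a random event $\cE(\tau,R)$ of controlled probability.

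The first step is to analyse $\psi^\epsilon\lin$ itself. The d'Alembert formula expresses it as a sum and difference of the boundary profiles transported along null lines; combined with the Gaussian nature of Brownian motion and white noise, Kolmogorov continuity and dyadic chaining give that $\psi^\epsilon\lin \to \psi\lin$ in $C_t^0 C_x^s \cap C_t^1 C_x^{s-1}$ for every $s < \frac 12$, with quantitative Gaussian tail bounds. Next I would construct, by second-order Wiener chaos computations, renormalised versions of the quadratic stochastic objects
\begin{equation}
\partial_\alpha \psi^\epsilon\lin \cdot \partial_\beta \psi^\epsilon\lin, \qquad \alpha, \beta \in \{t, x\},
\end{equation}
which a priori fail to make classical sense at this regularity. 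Sharp $L^p$ moment bounds via Gaussian hypercontractivity, combined with a union bound over dyadic space-time scales inside $[{-}\tau,\tau]\times[{-}R,R]$, then yield the tail estimate $\bP(\cE(\tau,R)) \geq 1 - CR\exp({-}c\tau^{-c})$ by taking $\cE(\tau,R)$ to be the event on which all the renormalised stochastic objects are bounded by suitable powers of $\tau^{-1}$.

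On this event I would then solve a contraction for $w^\epsilon$. Its equation has forcing built from the stochastic objects just constructed, together with mixed bilinear terms coupling $w^\epsilon$ to $\psi^\epsilon\lin$ and genuinely cubic self-interactions of $w^\epsilon$. I would work in a Banach space of sufficiently regular functions, for instance $C_t^0 H_x^\sigma$ for some $\sigma$ slightly above $\frac 12$, or in $X^{s,b}$-type spaces adapted to the $1+1$ d'Alembertian, and close a Picard iteration whose existence time shrinks with $\tau$. Since the nonlinearity carries the Lagrange multiplier factor $\mu\psi$, the resulting fixed point must remain tangent to $T\cM$; this follows by checking the constraint for each $\epsilon > 0$ (where smoothness reduces it to an ODE computation) and passing to the limit in $C_t^0 C_x^s$, where evaluation of the constraint is continuous.

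The hard part is the interplay between the critical scaling and the geometric constraint. In $1+1$ dimensions the wave-maps scaling is precisely $\dot H^{1/2}\times \dot H^{-1/2}$, so Brownian data sit exactly at the scaling-critical threshold; no subcritical safety margin is available, and the contraction must exploit the full gain afforded by the probabilistic cancellations in the renormalised objects. Moreover, the quadratic-in-derivative nonlinearity multiplied by the rough factor $\psi$ produces a genuine triple product of rough objects, which demands a paracontrolled or random-tensor decomposition to separate resonant from non-resonant frequency interactions and to distribute the available regularity appropriately between the three factors. Matching this structural frequency analysis with the target-manifold geometry is the essential technical core of the proof.
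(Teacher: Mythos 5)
The statement you have attempted to prove is not a result of this paper: it is Theorem~1.3 of Bringmann, L\"uhrmann and Staffilani \cite{BLS-21}, quoted verbatim in Section~\ref{ssec:biblio} ``for the convenience of the reader'', with an explicit referral to the original article for the meaning of all the objects appearing in it. The paper contains no proof of this theorem, so there is no internal argument to compare yours with; the authors' own results (Theorems~\ref{thm:1} and~\ref{thm:2}) are established by an entirely different, discrete route (a space--time lattice scheme, the reflection identity of Lemma~\ref{lem:reflections}, and Prokhorov compactness), which deliberately avoids the Fourier-analytic machinery you invoke.

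Taken on its own terms, your text is a strategy outline rather than a proof, and the gap is the one you yourself name in the final paragraph. Brownian data sit exactly at the scaling-critical regularity $\dot H^{1/2}\times \dot H^{-1/2}$ for $(1+1)$-dimensional wave maps, so a first-order Da Prato--Debussche expansion $\psi^\epsilon=\psi^\epsilon\lin+w^\epsilon$ with a contraction for $w^\epsilon$ in $C^0_tH^\sigma_x$, $\sigma>\frac12$, does not close by generic multilinear estimates: the mixed term $\partial\psi\lin\cdot\partial w$ alone costs more derivatives than the remainder has to spare, and you do not specify the mechanism (the null pairing of left- and right-moving derivatives $\partial_u\phi\cdot\partial_v\phi$, which is what actually saves the day in this problem) that produces the required gain. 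Likewise, ``renormalised versions'' of $\partial_\alpha\psi\lin\cdot\partial_\beta\psi\lin$ cannot be introduced by subtracting divergent counterterms without further justification, since any such modification must be compatible with the pointwise constraint $|\psi|=1$; asserting their existence via ``second-order Wiener chaos computations'' does not establish it. Until these two points are carried out, the tail bound for $\cE(\tau,R)$ and the convergence claim in (ii) remain unproved.
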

The result above has at least two major advantages with respect to our Theorem~\ref{thm:1}.
First, it provides \emph{uniqueness}, whereas in Theorem~\ref{thm:1} it is a priori possible
to obtain distinct limit fields $\phi$ for various sequences $N_k$.
Second, only the initial data is regularised, but not the equation,
hence the notion of solution used in \cite{BLS-21} is less controversial than ours.
On the other hand, the advantage of Theorem~\ref{thm:1} is to provide globally defined solutions.

In the same paper it was proved, see \cite[Theorem 1.2]{BLS-21}, that $C^{\frac 12+}$ is the threshold regularity
for deterministic local well-posedness of \eqref{eq:wm}, thus any positive results at the level of regularity
of the Brownian motion have to exploit randomness in an essential way.
It is known since the work \cite{BT-08-1} cited above, that randomisation of the initial
data can allow to obtain results on local well-posedness at a lower regularity
than what is achievable in the deterministic setting.

Finally, we mention that in the field of stochastic PDEs, the source of randomness
is most often given by a random forcing term, as opposed to random initial conditions
considered here.
Papers on the stochastic wave maps equation driven by spatially regular Wiener process
include \cite{Brz+Ondr_2007, Brz+Ondr_2013}.

\subsection{Organisation of the paper}
\label{ssec:summary}
In Section~\ref{sec:cauchy}, we build the local well-posedness theory for \eqref{eq:wm-cauchy}
in the case of locally absolutely continuous boundary data.
Even if our arguments are more-less straightforward adaptations of \cite[Section 9]{Keel+Tao_1998}
to the setting of the characteristic Cauchy problem,
we decided to present complete proofs, since they serve as a model
for the analogous results in the case of the discrete wave maps equation, which is done in Section~\ref{sec:discrete}.
The idea to study the characteristic Cauchy problem comes from the mathematical theory of General Relativity, see \cite{ChBr}.
In a similar context as in the present paper, it appears in the work of McKean~\cite{McKean-95}.

At the beginning of Section~\ref{sec:discrete}, we introduce the (deterministic) discrete wave maps equation.
By analogy with the continuous case, we develop a well-posedness theory,
including the crucial ``long time perturbation'', see Lemma~\ref{lem:l-time-pertb},
which quantifies how approximate solutions differ from exact ones.
Theorem~\ref{thm:2} is then proved as follows. We let $\phi$ be a (continuous) wave map defined on some rectangle
of the $(u, v)$-plane. We fix a mesh size (the same for both variables) and evaluate $\phi$ at the mesh points.
We check that the resulting matrix is an approximate solution of the discrete wave maps equation,
and conclude by invoking the long time perturbation for the discrete equation.

Section~\ref{sec:random} is devoted to proving Theorem~\ref{thm:2}.
An evaluation of the boundary data at the mesh points yields a Markov chain
with the transition densities given by the heat kernel.
We then observe that the symmetry property of the heat kernel
leads to the invariance of the distribution of the boundary data by the discrete wave maps evolution, see Lemma~\ref{lem:reflections}.

Already in the early works on the topic, some kind of discretisation of the space variable
was used in order to construct approximations of a given Hamiltonian PDE by a sequence
of Hamiltonian ODEs. For instance, Zhidkov~\cite{Zhidkov-91} employed a discretisation in the physical space,
whereas in \cite{Bourgain-94}, a cut-off in the Fourier space was imposed.
In these works, the invariance of the Gibbs measures for the approximating ODEs is crucially used.
Our Lemma~\ref{lem:reflections} plays an analogous role.
We believe that, in the context of the study of Gibbs distributions,
it is our original contribution to use a space-time discretisation in order to exploit the null structure of the wave maps equation.

By an argument based on L\'evy's proof of his Modulus of Continuity Theorem, see \cite[Section 1.6]{McKean-SI},
the invariance property mentioned above allows us to obtain bounds on the modulus of continuity
of the discrete wave maps when the mesh size converges to $0$.
The final step of our proof relies on the Prokhorov Theorem,
an idea going back, in a similar context, to McKean~\cite{McKean-CMP}, and Burq, Thomann and Tzvetkov~\cite{BTT-18}.


\subsection{Notation}
\label{ssec:notation}
The set of integers is denoted by $\bZ$,
the set of non-negative integers by $\bN$ and the set of positive integers by $\bN^*$.

For
intervals $[u_1, u_2], [v_1, v_2] \subset [0, \infty)$, we denote by $C([u_1, u_2]) = C([u_1, u_2]; \bR^{d+1})$
and by $C([u_1, u_2]\times [v_1, v_2]) = C([u_1, u_2]\times [v_1, v_2]; \bR^{d+1})$ the spaces of
continuous functions with the sup norm.
We denote by $X([u_1, u_2]) \subset C([u_1, u_2])$
and by $X([u_1, u_2]\times [v_1, v_2]) \subset C([u_1, u_2]\times [v_1, v_2])$ the sets of $\bS^d$-valued functions.
These are closed sets.

We denote by $C([0, \infty)) = C([0, \infty); \bR^{d+1})$
and by $C([0, \infty)^2) = C([0, \infty)^2; \bR^{d+1})$ the spaces of
continuous functions with the topology of uniform convergence on compact sets.
These are separable Fr\'echet spaces.
We denote by $X([0, \infty)) \subset C([0, \infty))$
and by $X([0, \infty)^2) \subset C([0, \infty)^2)$ the sets of $\bS^d$-valued functions.
These are closed sets.

If $N \in \bN$ and $(Y(0), Y(1), \ldots, Y(N))$ is a sequence with values in a vector space,
we denote by $EY$ the sequence $(Y(1), Y(2), \ldots, Y(N))$ and by $\delta Y$ the sequence $(Y(1) - Y(0), Y(2) - Y(1), \ldots, Y(N) - Y(N-1))$.
We use the same notation for infinite sequences.
If $(Y(k, n))_{k, n}$ is a double sequence, we set
$E_k Y(k, n) := Y(k+1, n)$, $E_n Y(k, n) := Y(k, n+1)$,
$\delta_k Y(k, n) := Y(k+1, n) - Y(k, n)$
and $\delta_n Y(k, n) := Y(k, n+1) - Y(k, n)$, thus for instance
$\delta_k\delta_n Y(k, n) = Y(k+1, n+1) - Y(k+1, n)-Y(k, n+1) + Y(k, n)$.

For $N_1, N_2 \in \{{-}\infty\} \cup \bZ \cup \{\infty\}$,
we set $\llbracket N_1, N_2 \rrbracket := \{n \in \bZ: N_1 \leq n \leq N_2\}$.
The norms $\ell^1$ and $\ell^\infty$ of sequences (finite or infinite, single or multiple) are defined as usual.

For $x \in \bR$, the unique integer $n$ such that $n \leq x < n+1$ is denoted by $\floor{x}$,
and the fractional part $x - n \in [0, 1)$ is denoted by $\{x\}$.
A number $x \in \bR$ is called \emph{dyadic} if there exist $k \in \bZ$ and $N \in \bN$ such that $x = \frac{k}{2^N}$.

\section{Well-posedness for absolutely continuous boundary data}
\label{sec:cauchy}

\subsection{Notion of a solution}
Let $u_0, v_0 > 0$. For two continuous functions
$\phi_+: [0, u_0] \to \bR^{d+1}$ and $\phi_-: [0, v_0] \to \bR^{d+1}$
such that $\phi_+(0) = \phi_-(0)$, and an $L^1$ function $f: [0, u_0]\times [0, v_0] \to \bR^{d+1}$, the unique solution of the boundary value problem
\begin{equation}
\label{eq:lin-nonhom-wave}
\begin{gathered}
\partial_u\partial_v \phi(u, v) = f(u, v), \quad
\phi(u, 0) = \phi_+(u), \quad \phi(0, v) = \phi_-(v)
\end{gathered}
\end{equation}
is given for all $(u, v) \in [0, u_0]\times [0, v_0]$ by
\begin{equation}
\label{eq:lin-nonhom}
\begin{aligned}
\phi(u, v) := \phi_+(u) + \phi_-(v) - \phi_+(0) + \int_0^u\int_0^v f(w, z)\ud z\ud w.
\end{aligned}
\end{equation}

Motivated by this formula, we have the following notion of solution of \eqref{eq:wm-cauchy}.
\begin{definition}
\label{def:strong}
Let $u_0, v_0 > 0$. We say that a function $\phi:[0, u_0]\times [0, v_0] \to \bS^{d}$
is a solution of the problem \eqref{eq:wm-cauchy} if $\partial_u \phi$ and $\partial_v\phi$
exist almost everywhere, $\partial_u \phi\cdot \partial_v\phi \in L^1([0, u_0]\times [0, v_0])$
and for all $(u, v) \in [0, u_0]\times [0, v_0]$
\begin{equation}
\label{eq:strong}
\phi(u, v) := \phi_+(u) + \phi_-(v) - \phi_+(0) - \int_0^u\int_0^v [(\partial_u\phi\cdot\partial_v\phi)\phi](w, z)\ud z\ud w.
\end{equation}
\end{definition}
\begin{remark}
Note that the definition above implies that
a wave map $\phi$ for continuous boundary data $\phi_\pm$ is continuous.
It also follows from Lemmas~\ref{lem:exist-partial} and \ref{lem:deriv-g+} that
if $\phi$ is a wave map according to the definition above, then $\partial_u\phi, \partial_v\phi$ and $\partial_u\partial_v\phi$ exist almost everywhere,
and \eqref{eq:wm-null} is satisfied almost everywhere.
\end{remark}

It is clear that if $\phi: [0, u_0]\times [0, v_0] \to \bS^d$ is a wave map
with boundary data $\phi_+$ and $\phi_-$, and $(\wt u_0, \wt v_0) \in (0, u_0] \times (0, v_0]$, then $\phi\vert_{[0, \wt u_0]\times [0, \wt v_0]}$ is a wave map with boundary
data $\phi_+\vert_{[0, \wt u_0]}$ and $\phi_-\vert_{[0, \wt v_0]}$.
Conversely, one can glue together wave maps as follows.
\begin{proposition}
\label{prop:gluing}
Let $u_0, v_0 > 0$, $\phi_+ \in X([0, u_0])$, $\phi_- \in X([0, v_0])$
and $\phi: [0, u_0]\times [0, v_0] \to \bS^d$.
Assume that $\wt u_0 \in (0, u_0)$, $\phi\vert_{[0, \wt u_0]\times [0, v_0]}$ is a wave map
with boundary data $\phi_+\vert_{[0, \wt u_0]}$ and $\phi_-$,
and that $\wt \phi: [0, u_0 - \wt u_0]\times [0, v_0]\to \bS^d$ defined by
\begin{equation}
\wt \phi(u, v) := \phi(\wt u_0 + u, v)
\end{equation}
is a wave map with boundary data $u \mapsto \phi_+(\wt u_0 + u)$ and $v \mapsto \phi(\wt u_0, v)$.
Then $\phi$ is a wave map with boundary data $\phi_\pm$.

An analogous result holds if we exchange the roles of $u$ and $v$.
\end{proposition}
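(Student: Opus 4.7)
The plan is to verify the integral equation \eqref{eq:strong} of Definition~\ref{def:strong} for $\phi$ on the whole rectangle $[0, u_0]\times [0, v_0]$ by splitting at $u = \wt u_0$ and combining the two integral identities supplied by the hypotheses. First I would dispose of the pointwise regularity requirements: since $\phi\rstr_{[0, \wt u_0]\times [0, v_0]}$ is a wave map, $\partial_u\phi$ and $\partial_v\phi$ exist almost everywhere on $[0, \wt u_0]\times [0, v_0]$ with $\partial_u\phi\cdot \partial_v\phi \in L^1$ there; the chain rule $\partial_u\wt\phi(u, v) = \partial_u\phi(\wt u_0 + u, v)$ (and similarly for $\partial_v$) transfers the corresponding property of $\wt\phi$ to $[\wt u_0, u_0]\times [0, v_0]$. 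Since the line $\{u = \wt u_0\}$ has planar measure zero, these combine into a.e.\ existence and global integrability on the full rectangle.

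Next, for $(u, v)$ with $u > \wt u_0$, set $u' := u - \wt u_0$ and apply \eqref{eq:strong} to $\wt\phi$ at $(u', v)$, using $\wt\phi(0, v) = \phi(\wt u_0, v)$ and $\wt\phi(u', 0) = \phi_+(\wt u_0 + u') = \phi_+(u)$; this gives
\begin{equation*}
\phi(u, v) = \phi_+(u) + \phi(\wt u_0, v) - \phi_+(\wt u_0) - \int_0^{u'}\!\!\int_0^v [(\partial_u\wt\phi\cdot\partial_v\wt\phi)\wt\phi](w, z)\ud z\ud w.
\end{equation*}
The middle value $\phi(\wt u_0, v)$ is in turn given by \eqref{eq:strong} for the restricted left-hand wave map. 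Substituting, the boundary pieces telescope to $\phi_+(u) + \phi_-(v) - \phi_+(0)$, and the change of variables $w \mapsto w + \wt u_0$ in the $\wt\phi$-integral turns it into an integral of $(\partial_u\phi\cdot\partial_v\phi)\phi$ over $[\wt u_0, u]\times [0, v]$; concatenating with the left-hand integral over $[0, \wt u_0]\times [0, v]$ yields exactly $\int_0^u\int_0^v$, which is \eqref{eq:strong} for $\phi$ at $(u, v)$. For $u \leq \wt u_0$ the identity is the hypothesis. The analogous statement with the roles of $u$ and $v$ exchanged is proved verbatim.

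The argument is essentially bookkeeping and I do not expect a genuine obstacle; the only point deserving care is the consistency of boundary data at the gluing line $u = \wt u_0$, which is automatic because $\phi(\wt u_0, v)$ is prescribed as the $v$-boundary datum of $\wt\phi$ by hypothesis, and $\phi(\wt u_0, 0) = \phi_+(\wt u_0)$ follows directly from the boundary condition of the left-hand wave map at $v = 0$, so the two corners agree.
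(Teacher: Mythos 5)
Your proof is correct and follows essentially the same route as the paper: combine the $L^1$ bounds on the two subrectangles, write the integral identity \eqref{eq:strong} for $\wt\phi$ (equivalently, for $\phi$ on $[\wt u_0, u]\times[0,v]$ after the change of variables), and substitute the identity for $\phi(\wt u_0, v)$ coming from the left-hand piece so that the boundary terms telescope. The extra remarks on a.e.\ differentiability across the measure-zero gluing line and on corner consistency are fine but not needed beyond what the paper records.
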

\begin{proof}
By assumption, $\partial_u \phi\cdot \partial_v \phi \in L^1([0, \wt u_0] \times [0, v_0])$
and $\partial_u \phi\cdot \partial_v \phi \in L^1([\wt u_0, u_0] \times [0, v_0])$,
thus $\partial_u \phi\cdot \partial_v \phi \in L^1([0, u_0] \times [0, v_0])$.
Let $u \geq \wt u_0$.
By assumption,
\begin{equation}
\begin{aligned}
\phi(u, v) &= \phi_+(u) + \phi(\wt u_0, v) - \phi_+(\wt u_0) - \int_{\wt u_0}^u\int_0^v[(\partial_u\phi\cdot\partial_v\phi)\phi](w, z)\ud z\ud w, \\
\phi(\wt u_0, v) &= \phi_+(\wt u_0) + \phi_-(v) - \phi_+(0) - \int_0^{\wt u_0}\int_0^v[(\partial_u\phi\cdot\partial_v\phi)\phi](w, z)\ud z\ud w.
\end{aligned}
\end{equation}
Replacing $\phi(\wt u_0, v)$ in the first line by the right hand side of the second line,
we obtain \eqref{eq:strong}.
\end{proof}

\subsection{Local and global well-posedness}
We now explain how to adapt \cite[Section 9]{Keel+Tao_1998} to solve the characteristic Cauchy problem.
Let $u_0, v_0 > 0$. For any $\phi_+: [0, u_0] \to \bR^{d+1}$ and $\phi_-: [0, v_0] \to \bR^{d+1}$ such that $\phi_+' \in L^1([0, u_0])$, $\phi_-' \in L^1([0, v_0])$ and $\phi_+(0) = \phi_-(0)$,
and $f: [0, u_0]\times [0, v_0] \to \bR^{d+1}$, $f \in L^1([0, u_0]\times [0, v_0])$, we set
\begin{equation}
\Psi(\phi_+, \phi_-, f) := {-}(\partial_u \phi\cdot\partial_v \phi)\phi,
\end{equation}
where $\phi = \phi(\phi_+, \phi_-, f)$ is defined by \eqref{eq:lin-nonhom}.
The partial derivatives of $\phi$ can be interpreted in the sens of distributions,
but also, by Lemma~\ref{lem:exist-partial}, in the classical sense almost everywhere.

Note that for almost all $(u, v) \in [0, u_0] \times [0, v_0]$
\begin{equation}
\label{eq:Psi-at-0}
[\Psi(\phi_+, \phi_-, 0)](u, v) = -(\phi_+'(u)\cdot \phi_-'(v))(\phi_+(u) + \phi_-(v) - \phi_+(0)).
\end{equation}
\begin{lemma}
\label{lem:trilinear}
There exists $C > 0$ such that the following holds.
If $u_0, v_0 > 0$, $\phi_+, \sh\phi_+: [0, u_0]\to\bR^{d+1}$,
$\phi_-, \sh \phi_-: [0, v_0]\to\bR^{d+1}$ and $f, \sh f: [0, u_0]\times[0, v_0]\to\bR^{d+1}$
are such that
\begin{equation}
\label{eq:trilinear-ass-1}
\phi_+(0) = \phi_-(0),\quad |\phi_+(0)| \leq 2,\quad \sh\phi_+(0) = \sh\phi_-(0),\quad
|\sh \phi_+(0)| \leq 2
\end{equation}
and
\begin{equation}
\label{eq:trilinear-ass-2}
\begin{aligned}
\max\big(\|\phi_+'\|_{L^1}, \|\phi_-'\|_{L^1}, \|f\|_{L^1}, \|(\sh\phi_+)'\|_{L^1}, \|(\sh\phi_-)'\|_{L^1}, \|\sh f\|_{L^1}\big) \leq 1,
\end{aligned}
\end{equation}
then
\begin{equation}
\label{eq:trilinear-1}
\begin{aligned}
&\|\Psi(\sh\phi_+, \sh\phi_-, \sh f) - \Psi(\phi_+, \phi_-, f)\|_{L^1} \leq \\
&\quad C\big((\|\phi_+'\|_{L^1} + \|f\|_{L^1})(\|(\sh \phi_-)' - \phi_-'\|_{L^1} + \|\sh f - f\|_{L^1}) +\\
&\quad+ (\|(\sh \phi_-)'\|_{L^1} + \|\sh f\|_{L^1})(\|(\sh \phi_+)' - \phi_+'\|_{L^1} + \|\sh f - f\|_{L^1}) \\
&\quad+ (\|(\sh \phi_+)'\|_{L^1} + \|\sh f\|_{L^1})(\|(\sh \phi_-)'\|_{L^1} + \|\sh f\|_{L^1}) \times \\
&\qquad \times\big(|\sh\phi_+(0) - \phi_+(0)| + \|(\sh \phi_+)' - \phi_+'\|_{L^1} + \|(\sh \phi_-)' - \phi_-'\|_{L^1} + \|\sh f - f\|_{L^1}\big)\big).
\end{aligned}
\end{equation}
\end{lemma}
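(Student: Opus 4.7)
The expression $\Psi(\phi_+, \phi_-, f) = -(\partial_u\phi \cdot \partial_v\phi)\,\phi$ is trilinear in the triple $(\partial_u\phi, \partial_v\phi, \phi)$, so I will bound the difference by a three-term telescoping decomposition in which only one factor is replaced at a time. The whole estimate rests on two observations that both follow from differentiating \eqref{eq:lin-nonhom}:
\begin{equation}
\partial_u \phi(u,v) = \phi_+'(u) + \int_0^v f(u,z)\ud z, \qquad \partial_v \phi(u,v) = \phi_-'(v) + \int_0^u f(w,v)\ud w,
\end{equation}
from which one deduces the mixed-norm bounds
\begin{equation}
\|\partial_u \phi\|_{L^1(\ud u) L^\infty(\ud v)} \leq \|\phi_+'\|_{L^1} + \|f\|_{L^1}, \qquad
\|\partial_v \phi\|_{L^\infty(\ud u) L^1(\ud v)} \leq \|\phi_-'\|_{L^1} + \|f\|_{L^1},
\end{equation}
together with the $L^\infty$ bound $\|\phi\|_\infty \leq |\phi_+(0)| + \|\phi_+'\|_{L^1} + \|\phi_-'\|_{L^1} + \|f\|_{L^1}$, which under \eqref{eq:trilinear-ass-1}--\eqref{eq:trilinear-ass-2} is $\leq 5$. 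Exactly the same bounds hold for $\sh\phi$ and, with differences on the right, for $\partial_u \sh\phi - \partial_u\phi$, $\partial_v \sh\phi - \partial_v\phi$, and
\begin{equation}
\|\sh\phi - \phi\|_\infty \leq 3|\sh\phi_+(0) - \phi_+(0)| + \|(\sh\phi_+)' - \phi_+'\|_{L^1} + \|(\sh\phi_-)' - \phi_-'\|_{L^1} + \|\sh f - f\|_{L^1}.
\end{equation}

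Next, I would apply the Fubini-type product inequality
\begin{equation}
\|g\,h\|_{L^1(\ud u \ud v)} \leq \|g\|_{L^1(\ud u) L^\infty(\ud v)} \cdot \|h\|_{L^\infty(\ud u) L^1(\ud v)},
\end{equation}
which is immediate from $\int\!\!\int |g h|\ud u\ud v \leq \int \sup_v |g(u,v)| \cdot \int |h(u,v)|\ud v\,\ud u$ and a further Hölder step in $u$. This is the mechanism that turns the available mixed-norm control of the two directional derivatives into genuine $L^1$ control of their product.

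The core of the proof is then the telescoping identity
\begin{equation}
\begin{aligned}
(\partial_u \sh\phi \cdot \partial_v\sh\phi)\sh\phi - (\partial_u\phi\cdot\partial_v\phi)\phi
&= (\partial_u\sh\phi \cdot \partial_v\sh\phi)(\sh\phi - \phi) \\
&\quad+ \big((\partial_u\sh\phi - \partial_u\phi)\cdot\partial_v\sh\phi\big)\,\phi \\
&\quad+ \big(\partial_u\phi\cdot(\partial_v\sh\phi - \partial_v\phi)\big)\,\phi.
\end{aligned}
\end{equation}
Applying the product inequality to each of the three terms and using the $L^\infty$ bound $\|\phi\|_\infty \leq C$ on the last factor of the second and third terms, and the bound on $\|\sh\phi - \phi\|_\infty$ above for the first term, yields exactly the three summands of \eqref{eq:trilinear-1}. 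I do not anticipate any real obstacle: the only point requiring a little care is the choice of which factor is carried in which direction's mixed norm (that is, $\partial_u$ with $L^1_u L^\infty_v$ and $\partial_v$ with $L^\infty_u L^1_v$), so that the Fubini-type inequality gives an estimate by the $L^1$ norms of the boundary derivatives and of $f$ rather than by any quantity depending on $u_0, v_0$.
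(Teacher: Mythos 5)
Your proposal is correct and follows essentially the same route as the paper: the same formulas for $\partial_u\phi$ and $\partial_v\phi$, the same mixed-norm bounds $L^1_u L^\infty_v$ / $L^\infty_u L^1_v$ controlled by $\|\phi_\pm'\|_{L^1}+\|f\|_{L^1}$, the same $L^\infty$ bounds on $\phi$ and $\sh\phi-\phi$, and the same three-term telescoping of the trilinear expression (which the paper phrases as continuity of a trilinear map $L^\infty_{u,v}\times L^\infty_v L^1_u\times L^1_v L^\infty_u\to L^1_{u,v}$). The only cosmetic difference is which variable carries the outer supremum in each mixed norm; both pairings are valid Hölder duals and yield the stated bound.
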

\begin{proof}
For $\phi_\pm$ and $f$ as above, we define $g_+ = g_+(\phi_+, f)$ and $g_- = g_-(\phi_+, f)$ by
\begin{align}
\label{eq:g+def}
g_+(u, v) &:= \partial_u \phi(u, v) = \phi_+'(u) + \int_0^v f(u, z)\ud z, \\
\label{eq:g-def}
g_-(u, v) &:= \partial_v \phi(u, v) = \phi_-'(v) + \int_0^u f(w, v)\ud w.
\end{align}
Observe that
\begin{equation}
\sup_{0 \leq v \leq v_0}\int_0^{u_0}|g_+(u, v)|\ud u \leq \int_0^{u_0}|\phi_+'(u)|\ud u +
\int_0^{u_0}\int_0^{v_0}|f(u, v)|\ud v\ud u,
\end{equation}
thus $g_+ \in \scrL(\dot W^{1, 1} \oplus L^1; L_v^\infty L_u^1)$.
We also have
\begin{equation}
\int_0^{v_0}\sup_{0 \leq u \leq u_0}|g_-(u, v)|\ud v \leq \int_0^{v_0}|\phi_-'(v)|\ud v +
\int_0^{v_0}\int_0^{u_0}|f(u, v)|\ud v\ud u,
\end{equation}
thus $g_- \in \scrL(\dot W^{1, 1} \oplus L^1; L_v^1 L_u^\infty)$.
Formula \eqref{eq:lin-nonhom} yields
\begin{equation}
\begin{aligned}
\sup_{0\leq u\leq u_0}\sup_{0\leq v\leq v_0}|\phi(u, v)| &\leq
|\phi_+(0)| + \int_0^{u_0}|\phi_+'(u)|\ud u \\
&+ \int_0^{v_0}|\phi_-'(v)|\ud v + \int_0^{u_0}\int_0^{v_0}|f(u, v)|\ud v\ud u \leq 5,
\end{aligned}
\end{equation}
where the last inequality follows from \eqref{eq:trilinear-ass-1} and \eqref{eq:trilinear-ass-2}.
Similarly,
\begin{equation}
\|\sh \phi - \phi\|_{L^\infty} \leq |\sh\phi_+(0) - \phi_+(0)| + \|(\sh \phi_+)' - \phi_+'\|_{L^1} + \|(\sh \phi_-)' - \phi_-'\|_{L^1} + \|\sh f - f\|_{L^1}.
\end{equation}

For all $\phi \in L_{u, v}^\infty$, $g_+ \in L_v^\infty L_u^1$ and $g_- \in L_v^1 L_u^\infty$,
set $\wt \Psi(\phi, g_+, g_-) := -(g_+\cdot g_-)\phi$.
Since $\wt \Psi: L_{u,v}^\infty \times L_v^\infty L_u^1 \times L_v^1L_u^\infty \to L_{u,v}^1$ is a continuous trilinear functional, we have
\begin{equation}
\begin{aligned}
\|\wt\Psi(\sh \phi, \sh g_+, \sh g_-) - \wt\Psi(\phi, g_+, g_-)\|_{L_{u,v}^1} &\lesssim \|\phi\|_{L_{u,v}^\infty}\|g_+\|_{L_v^\infty L_u^1}\|\sh g_- - g_-\|_{L_v^1 L_u^\infty} \\
&+ \|\phi\|_{L_{u,v}^\infty}\|\sh g_+ - g_+\|_{L_v^\infty L_u^1}\|\sh g_-\|_{L_v^1 L_u^\infty} \\
&+ \|\sh \phi - \phi\|_{L_{u,v}^\infty}\|\sh g_+\|_{L_v^\infty L_u^1}\|\sh g_-\|_{L_v^1 L_u^\infty},
\end{aligned}
\end{equation}
and \eqref{eq:trilinear-1} follows from the fact that
\begin{equation}
\Psi(\phi_+, \phi_-, f) = \wt \Psi\big(\phi(\phi_+, \phi_-, f), g_+(\phi_+, f), g_-(\phi_-, f)\big).
\end{equation}

%
\end{proof}

\begin{theorem}
\label{thm:keel-tao}
If $u_0, v_0 > 0$ and $\phi_+ \in X([0, u_0])$, $\phi_- \in X([0, v_0])$
are such that $\phi_+' \in L^1([0, u_0])$, $\phi_-' \in L^1([0, v_0])$ and $\phi_+(0) = \phi_-(0)$,
then there exists a unique wave map $\phi = \Phi(\phi_+, \phi_-)$ with boundary data $\phi_\pm$.

If $\phi_\pm^{(n)}$ is a sequence of boundary data such that $\phi_+^{(n)}(0) = \phi_-^{(n)}(0)$ and
\begin{equation}
\lim_{n\to\infty}\max(|\phi_+^{(n)}(0) - \phi_+(0)|, \|(\phi_+^{(n)})' - \phi_+'\|_{L^1},
\|(\phi_-^{(n)})' - \phi_-'\|_{L^1}) = 0,
\end{equation}
and $\phi^{(n)}$ are the corresponding wave maps, then
\begin{equation}
\lim_{n\to \infty}\|\partial_u\partial_v (\phi^{(n)} - \phi)\|_{L^1} = 0.
\end{equation}

Moreover, the following ``pointwise conservation laws'' hold:
\begin{align}
\int_0^{u_0}|\partial_u \phi(u, v_0)|\ud u &= \int_0^{u_0}|\phi_+'(u)|\ud u,
\label{eq:pointwise-1} \\
\int_0^{v_0}|\partial_v \phi(u_0, v)|\ud v &= \int_0^{v_0}|\phi_-'(v)|\ud v.
\label{eq:pointwise-2}
\end{align}
\end{theorem}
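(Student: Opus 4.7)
The plan is to adapt the Keel--Tao strategy to the characteristic Cauchy problem, using Lemma~\ref{lem:trilinear} as the main analytic input. First I would set up a Banach fixed-point argument: on a closed ball $B_r \subset L^1([0, u_0]\times [0, v_0])$, specialising Lemma~\ref{lem:trilinear} to $\sh{\phi}_\pm = \phi_\pm$ and $f = 0$ controls both the size of $\Psi(\phi_+, \phi_-, \sh f)$ and the Lipschitz constant of $\sh f \mapsto \Psi(\phi_+, \phi_-, \sh f)$ by quantities proportional to $\|\phi_+'\|_{L^1} + \|\phi_-'\|_{L^1} + r$. Absolute continuity of $\phi_\pm$ lets me shrink $u_0, v_0$ so that this quantity is arbitrarily small, giving a contraction on $B_r$. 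The resulting fixed point $f^\star$ defines $\phi := \phi(\phi_+, \phi_-, f^\star)$ via \eqref{eq:lin-nonhom}, which satisfies \eqref{eq:strong}; local uniqueness follows from the same bound.

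Next I would verify that $\phi$ takes values in $\bS^d$ and establish the pointwise conservation laws on the small rectangle. Setting $w := |\phi|^2 - 1$, a direct computation using \eqref{eq:wm-null} gives
\begin{equation}
\partial_u\partial_v w = 2(\partial_u\phi\cdot\partial_v\phi) + 2\phi\cdot\partial_u\partial_v\phi = -2(\partial_u\phi\cdot\partial_v\phi)\,w,
\end{equation}
with $w \equiv 0$ along the two characteristic sides. The associated linear inhomogeneous problem of the form \eqref{eq:lin-nonhom-wave} admits only the zero solution (Picard iteration in $L^\infty$), so $|\phi| \equiv 1$. Consequently $\phi\cdot \partial_u\phi = 0$ a.e., and since Lemmas~\ref{lem:exist-partial}--\ref{lem:deriv-g+} ensure $v \mapsto \partial_u\phi(u, v)$ is absolutely continuous for a.e.\ $u$, I compute
\begin{equation}
\partial_v |\partial_u\phi|^2 = 2\partial_u\phi\cdot\partial_u\partial_v\phi = -2(\partial_u\phi\cdot\partial_v\phi)(\phi\cdot\partial_u\phi) = 0.
\end{equation}
Therefore $|\partial_u\phi(u, v)| = |\phi_+'(u)|$ for a.e.\ $u$ and all $v$, and integrating in $u$ yields \eqref{eq:pointwise-1}; the identity \eqref{eq:pointwise-2} follows by symmetry.

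To globalise, I would iterate Proposition~\ref{prop:gluing}. Suppose $\phi$ is defined on $[0, u_\star]\times [0, v_0]$. By \eqref{eq:pointwise-2} applied to this piece, $\|\partial_v \phi(u_\star, \cdot)\|_{L^1([0, v_0])} = \|\phi_-'\|_{L^1([0, v_0])}$, independently of $u_\star$. The local step then produces a solution on $[0, \eta]\times [0, v_0]$ with boundary data $u \mapsto \phi_+(u_\star + u)$ and $v \mapsto \phi(u_\star, v)$, where $\eta$ is controlled by $\|\phi_-'\|_{L^1([0, v_0])}$ together with the smallness of $\|\phi_+'\|_{L^1([u_\star, u_\star + \eta])}$; absolute continuity of $\phi_+$ supplies a uniform lower bound on $\eta$, so finitely many iterations reach $u_0$. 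For continuous dependence, I apply Lemma~\ref{lem:trilinear} to the pair of fixed points corresponding to $(\phi_+, \phi_-)$ and $(\phi_+^{(n)}, \phi_-^{(n)})$ on each sub-rectangle: the contraction structure turns the trilinear bound into a Lipschitz estimate on the solution map, and iterating through the finitely many sub-rectangles gives $\|\partial_u\partial_v(\phi^{(n)} - \phi)\|_{L^1} \to 0$.

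The main obstacle is a subtle ordering point in the global step: the non-shrinking extension time $\eta$ relies crucially on the pointwise conservation law \eqref{eq:pointwise-2}, so the latter must be established on the initial small rectangle \emph{before} the gluing argument. Fortunately the conservation computation is purely algebraic and depends only on properties of the local solution, so the sequencing local existence, sphere-valuedness, conservation, gluing resolves the apparent circularity. A secondary technicality, the a.e.\ absolute continuity of $v \mapsto \partial_u\phi(u, v)$ needed to justify the conservation calculation, is precisely supplied by Lemmas~\ref{lem:exist-partial} and \ref{lem:deriv-g+}.
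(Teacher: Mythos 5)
Your local fixed-point step, the sphere-valuedness argument, and the derivation of the pointwise conservation laws all match the paper's proof of Lemma~\ref{lem:small-data} in substance. However, your globalisation step has a genuine gap. You propose to extend the solution strip by strip in the $u$-direction, solving on $[u_\star, u_\star+\eta]\times[0,v_0]$ with the \emph{full} $v$-interval, and you claim that $\eta$ can be chosen depending on $\|\phi_-'\|_{L^1([0,v_0])}$ and the smallness of $\|\phi_+'\|_{L^1([u_\star,u_\star+\eta])}$. But the contraction argument based on Lemma~\ref{lem:trilinear} requires \emph{both} boundary derivatives to be small in $L^1$: writing $\epsilon = \|\phi_+'\|_{L^1}$ and $B = \|\partial_v\phi(u_\star,\cdot)\|_{L^1([0,v_0])}$, the Lipschitz constant of $f \mapsto \Psi$ on a ball of radius $r$ is bounded below by a multiple of $B + r$ (this is the second term in \eqref{eq:trilinear-1}, coming from $\|\phi\|_{L^\infty}\|\sh g_+ - g_+\|_{L_v^\infty L_u^1}\|\sh g_-\|_{L_v^1 L_u^\infty}$), so no choice of $\eta$ produces a contraction when $B$ is of order one. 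The problem is $L^1$-critical in this sense: shrinking the $u$-strip does not shrink $\|\partial_v\phi(u_\star,\cdot)\|_{L^1([0,v_0])}$, which by \eqref{eq:pointwise-2} equals $\|\phi_-'\|_{L^1([0,v_0])}$ exactly. The paper's fix is to partition \emph{both} intervals into pieces on which $\int|\phi_+'| < \eta_0$ and $\int|\phi_-'| < \eta_0$, and to induct over the resulting grid of rectangles along anti-diagonals $j+k=i$; the two conservation laws \eqref{eq:pointwise-1}--\eqref{eq:pointwise-2}, applied on each small rectangle, propagate the smallness of both pieces of boundary data through the induction. Your one-directional gluing cannot be repaired without either this two-dimensional partition or a separate ``long-time perturbation'' induction on $B$ (as is done in the discrete setting in Lemma~\ref{lem:l-time-pertb}).

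A secondary point: ``local uniqueness follows from the same bound'' is too quick. The contraction gives uniqueness only among wave maps whose nonlinearity $\wt f = -(\partial_u\wt\phi\cdot\partial_v\wt\phi)\wt\phi$ lies in the small ball $B_r$, whereas Definition~\ref{def:strong} only guarantees $\wt f \in L^1$ with no size control. The paper closes this with a continuity argument: assuming $\|\wt f\|_{L^1} > 2R\eta_0^2$, one picks the first $v_1$ at which $\|\wt f\|_{L^1(v\le v_1)}$ reaches $2R\eta_0^2$ and derives a contradiction from the self-mapping property of $\Psi$ on the truncated rectangle. Some argument of this kind is needed in your write-up as well.
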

We first prove that the result holds for small boundary data.
\begin{lemma}
\label{lem:small-data}
There exists $\eta_0 > 0$ such that the
conclusions of Theorem~\ref{thm:keel-tao} hold if $\|\phi_+'\|_{L^1} \leq \eta_0$ and $\|\phi_-'\|_{L^1} \leq \eta_0$.
\end{lemma}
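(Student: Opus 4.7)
The plan is to set up a fixed-point problem for the source term $f = \partial_u\partial_v\phi$, using the integral formulation \eqref{eq:strong}. Concretely, for fixed $\phi_\pm$, I would look for $f$ in the closed ball $B_\eta \subset L^1([0,u_0]\times[0,v_0])$ of some small radius $\eta \in (0,1)$ satisfying $f = \Psi(\phi_+,\phi_-,f)$, and then set $\phi := \phi(\phi_+,\phi_-,f)$ via \eqref{eq:lin-nonhom}. To justify the contraction, apply Lemma~\ref{lem:trilinear} twice: once with $(\sh\phi_+,\sh\phi_-,\sh f) = (\phi_+,\phi_-,f)$ and $(\phi_+,\phi_-,f) = (\phi_+,\phi_-,0)$, which together with \eqref{eq:Psi-at-0} gives $\|\Psi(\phi_+,\phi_-,0)\|_{L^1} \lesssim \eta_0^2$; and once with two candidates $f,\sh f \in B_\eta$ keeping the data fixed, which gives $\|\Psi(\phi_+,\phi_-,\sh f) - \Psi(\phi_+,\phi_-,f)\|_{L^1} \lesssim (\eta_0+\eta)\|\sh f-f\|_{L^1}$. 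Choosing $\eta$ of size $\sim \eta_0^2$ and then $\eta_0$ small enough, $\Psi(\phi_+,\phi_-,\cdot)$ is a contraction of $B_\eta$ into itself, and Banach's theorem produces a unique fixed point. Uniqueness within a larger class follows by noting that any solution $\phi$ with $\partial_u\phi\cdot\partial_v\phi\in L^1$ automatically has $\|f\|_{L^1}$ controlled by $\|\phi_+'\|_{L^1}$ and $\|\phi_-'\|_{L^1}$ via \eqref{eq:pointwise-1}--\eqref{eq:pointwise-2}, hence lies in $B_\eta$ for $\eta_0$ small.

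Next I must verify that the function $\phi$ produced by the fixed point is genuinely $\bS^d$-valued. Since $\partial_u\partial_v\phi = f = -(\partial_u\phi\cdot\partial_v\phi)\phi$ a.e., a direct computation gives
\begin{equation}
\partial_u\partial_v(|\phi|^2-1) = 2\partial_u\phi\cdot\partial_v\phi + 2\phi\cdot\partial_u\partial_v\phi = -2(\partial_u\phi\cdot\partial_v\phi)(|\phi|^2-1),
\end{equation}
with vanishing characteristic data (because $|\phi_\pm|\equiv 1$ and $\phi_+(0)=\phi_-(0)$). This is a linear characteristic Cauchy problem of the form $\partial_u\partial_v g = \lambda g$ with $\lambda \in L^1$, and the same Banach contraction argument (or a Grönwall-type bound on the iterated integrals) forces $g \equiv 0$, i.e.\ $|\phi| \equiv 1$.

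For continuous dependence, let $(\phi,f)$ and $(\sh\phi,\sh f)$ be the fixed points corresponding to $\phi_\pm$ and $\sh\phi_\pm$. Since $\sh f - f = \Psi(\sh\phi_+,\sh\phi_-,\sh f) - \Psi(\phi_+,\phi_-,f)$, the bound \eqref{eq:trilinear-1} controls $\|\sh f - f\|_{L^1}$ by a linear combination of the data differences plus a term of order $\eta_0\|\sh f - f\|_{L^1}$. Absorbing the latter on the left-hand side (possible for $\eta_0$ small), one obtains the required convergence $\|\partial_u\partial_v(\sh\phi-\phi)\|_{L^1} = \|\sh f - f\|_{L^1} \to 0$.

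Finally, the pointwise conservation laws follow from the orthogonality $\phi\cdot\partial_u\phi = 0$ (a.e.\ consequence of $|\phi|\equiv 1$). Indeed,
\begin{equation}
\partial_v|\partial_u\phi|^2 = 2\partial_u\phi\cdot\partial_u\partial_v\phi = -2(\partial_u\phi\cdot\partial_v\phi)(\partial_u\phi\cdot\phi) = 0
\end{equation}
almost everywhere, so $|\partial_u\phi(u,v)| = |\partial_u\phi(u,0)| = |\phi_+'(u)|$ for a.e.\ $u$ and every $v \in [0,v_0]$; integrating in $u$ yields \eqref{eq:pointwise-1}, and \eqref{eq:pointwise-2} is symmetric. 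The step I expect to require the most care is the $|\phi|\equiv 1$ property: the fixed point lives a priori only in $\bR^{d+1}$, and one must argue via the linear uniqueness above before one can even assert $\phi\cdot\partial_u\phi=0$ and use it to derive the conservation laws.
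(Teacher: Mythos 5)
Your overall architecture matches the paper's proof almost step for step: a contraction for $f$ in an $L^1$-ball of radius $\sim\eta_0^2$ via Lemma~\ref{lem:trilinear}, the linear characteristic problem $\partial_u\partial_v(|\phi|^2-1)=-2(\partial_u\phi\cdot\partial_v\phi)(|\phi|^2-1)$ with vanishing data plus absorption to get $|\phi|\equiv 1$, and the orthogonality $\phi\cdot\partial_u\phi=0$ combined with the representation $\partial_u\phi(u,v)=\phi_+'(u)+\int_0^v f(u,z)\ud z$ to get the pointwise conservation laws. Two remarks. First, your uniqueness argument is the one place where you genuinely diverge: you put an arbitrary competitor $\wt{\phi}$ into the contraction ball by invoking the conservation laws a priori, whereas the paper runs a continuity/bootstrap argument on $v_1\mapsto\|\wt{f}\|_{L^1(v\le v_1)}$ and never appeals to the geometry. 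Your route does work, but only because a competitor is $\bS^d$-valued \emph{by Definition~\ref{def:strong}}, so the conservation-law derivation applies to it with no circularity; you should reorder the write-up so that the conservation laws are established for arbitrary solutions \emph{before} uniqueness, and you need the pointwise identities $|\partial_u\wt{\phi}(u,v)|=|\phi_+'(u)|$ and $|\partial_v\wt{\phi}(u,v)|=|\phi_-'(v)|$ a.e.\ (which your argument does yield), not merely the integrated statements \eqref{eq:pointwise-1}--\eqref{eq:pointwise-2}, in order to conclude $\|\wt{f}\|_{L^1}\le\|\phi_+'\|_{L^1}\|\phi_-'\|_{L^1}\le\eta_0^2$. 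Second, the ``direct computation'' of $\partial_u\partial_v(|\phi|^2-1)$ is only formal at this regularity; the paper justifies the corresponding integral identity (Lemma~\ref{lem:sphere-valued}) by approximating $(\phi_+,\phi_-,f)$ by smooth data and using continuity of both sides on $\dot W^{1,1}\times\dot W^{1,1}\times L^1$, and some such density argument should replace the pointwise product rule. With these points addressed, your proof is correct and essentially equivalent to the paper's.
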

\begin{proof}
\textbf{Step 1: Existence.}
By Lemma~\ref{lem:trilinear}, if we take $R > 0$ sufficiently large
and $\eta_0 > 0$ sufficiently small, then
the map $f \mapsto \Psi(\phi_+, \phi_-, f)$
sends a ball $B(0, 2R\eta_0^2)$ in the space $L^1([0, u_0]\times [0, v_0])$
to the ball $B(0, R\eta_0^2)$ in the same space, and is a strict contraction on $B(0, 2R\eta_0^2)$.
Let $f$ by the unique element of $B(0, 2R\eta_0^2)$ such that $f = \Psi(\phi_+, \phi_-, f)$.
By the definition of a wave map, $\phi = \phi(\phi_+, \phi_-, f)$ given by \eqref{eq:lin-nonhom} is a wave map with boundary data $\phi_\pm$.
The continuity with respect to the boundary data follows from \eqref{eq:trilinear-1}.

\noindent
\textbf{Step 2: Uniqueness.}
Let $\wt \phi$ be a wave map with boundary data $\phi_\pm$
and set $\wt f \in -(\partial_u \wt\phi\cdot \partial_v \wt \phi)\wt \phi
\in L^1([0, u_0]\times [0, v_0])$, so that $\wt f = \Psi(\phi_+, \phi_-, \wt f)$.
It suffices to prove that $\|\wt f\|_{L^1} \leq 2R\eta_0^2$, since
Step 1 will then yield $\wt f = f$.

To this end, suppose by contradiction that $\|\wt f\|_{L^1} > 2R\eta_0^2$ and let $v_1 \in (0, v_0)$
be such that $\|\wt f\|_{L^1(v \leq v_1)} = 2R\eta_0^2$.
Since $\wt f\vert_{v \leq v_1} = \Psi(\phi_+, \phi_-\vert_{v \leq v_1}, \wt f\vert_{v \leq v_1})$, the argument in Step 1 yields $\|\wt f\vert_{v \leq v_1}\|_{L^1} \leq R\eta_0^2$,
contradicting the definition of $v_1$.

\noindent
\textbf{Step 3: Image of the map.}
We now prove that if $f$ is the fixed point, then $\phi = \phi(\phi_+, \phi_-, f)$
takes values in $\bS^d$.
Since $\|f\|_{L^1} \lesssim \eta_0^2$, the proof of Lemma~\ref{lem:trilinear} yields
\begin{equation}
\|\partial_u\phi\cdot\partial_v\phi\|_{L^1} = \|g_+\cdot g_-\|_{L^1} \lesssim \eta_0^2.
\end{equation}
Applying Lemma~\ref{lem:sphere-valued} below with $f := -(\partial_u \phi\cdot\partial_v\phi)\phi$,
we have
\begin{equation}
1 - |\phi(u, v)|^2 = -2\int_0^u\int_0^v (\partial_u \phi(w, z)\cdot\partial_v\phi(w, z))(1 - |\phi(w, z)|^2)\ud z\ud w,
\end{equation}
in particular
\begin{equation}
\|1 - |\phi|^2\|_{L^\infty} \lesssim \eta_0^2 \|1 - |\phi|^2\|_{L^\infty},
\end{equation}
implying $|\phi(u, v)| = 1$ for all $(u, v)$ if $\eta_0$ is sufficiently small.
%

\noindent
\textbf{Step 4: Pointwise conservation laws.}
As in the preceding steps, let $f := -(\partial_u\phi\cdot\partial_v\phi)\phi \in L^1([0, u_0]\times[0, v_0])$. Let $B$ be given by Lemma~\ref{lem:exist-partial} and let $v_1 \in (0, v_0) \setminus B$,
so that for all $u \in [0, u_0]$,
$g(u) := \partial_v\phi(u, v_1)$ exists and is given by \eqref{eq:g-def}.
By the Fubini's Theorem, upon enlarging $B$ by a set of measure $0$, we can also assume
that $\partial_u\phi(u, v_1)$ exists for almost all $u \in [0, u_0]$.

We know by Step 2 that $|\phi(u, v_1)| = 1$ for all $u \in [0, u_0]$,
so the Product Rule yields
\begin{equation}
0 = \frac 12 \partial_v|\phi(u, v_1)|^2 = \phi(u, v_1) \cdot g(u), \qquad\text{for all }u \in [0, u_0].
\end{equation}
Hence we also have
$f(u, v_1) \cdot g(u) = -\big(\partial_u\phi(u, v_1)\cdot\partial_v\phi(u, v_1)\big)\big(\phi(u, v_1)\cdot g(u)\big) = 0$
for almost all $u \in [0, u_0]$.
By Lemma~\ref{lem:preserved-length} with $f(u, v_1)$ instead of $f(u)$, we obtain
\begin{equation}
\Big| \phi_-'(v_1) + \int_0^{u_0}f(w, v_1)\ud w \Big| = |g(u_0)| = |\phi_-'(v_1)|.
\end{equation}
This is true for almost all $v_1$, hence
\begin{equation}
\int_0^{v_0}\Big| \phi_-'(v) + \int_0^{u_0}f(w, v)\ud w \Big|\ud v = \int_0^{v_0}|\phi_-'(v)|\ud v,
\end{equation}
which is \eqref{eq:pointwise-2}, and \eqref{eq:pointwise-1} is proved analogously.
\end{proof}

\begin{remark}
One can check that if the boundary data $\phi_+$, $\phi_-$ are smooth,
then the map $\phi$ constructed
in Step 1 above is smooth as well. In this case, it is straightforward to check
that $\phi$ has values in $\bS^d$ and that the pointwise conservation laws hold.
Approximating general boundary data by a sequence of smooth maps and using
continuous dependence on the boundary data, one would obtain an alternative proof
of Step 3 and Step 4.
\end{remark}

\begin{lemma}
\label{lem:sphere-valued}
Let $u_0, v_0 > 0$, $\phi_+ \in X([0, u_0])$, $\phi_- \in X([0, v_0])$, $f \in L^1([0, u_0]\times [0, v_0])$ and let $\phi: [0, u_0] \times [0, v_0] \to \bR^{d+1}$ be given
by the formula \eqref{eq:lin-nonhom}. Then for all $(u, v) \in [0, u_0]\times [0, v_0]$
\begin{equation}
\label{eq:sphere-valued}
1 - |\phi(u, v)|^2 = -2\int_0^u\int_0^v\big(\partial_u \phi(w, z)\cdot \partial_v \phi(w, z) + \phi(w, z)\cdot f(w, z)\big)\ud z\ud w.
\end{equation}
\end{lemma}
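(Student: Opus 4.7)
The plan is to integrate the product-rule identity $\partial_u\partial_v(|\phi|^2) = 2\partial_u\phi\cdot\partial_v\phi + 2\phi\cdot\partial_u\partial_v\phi = 2(\partial_u\phi\cdot\partial_v\phi + \phi\cdot f)$ over the rectangle $[0,u]\times[0,v]$, exploiting that $|\phi|^2 \equiv 1$ on the two coordinate segments since $\phi_\pm$ take values in $\bS^d$.

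Executing this amounts to applying the Fundamental Theorem of Calculus twice. First, I would invoke Lemma~\ref{lem:exist-partial} together with Fubini to see that for a.e.\ $u \in [0, u_0]$ the slice $z\mapsto\phi(u,z)$ is absolutely continuous on $[0, v_0]$ with classical derivative $\partial_v\phi(u,z)$ given by \eqref{eq:g-def}; hence $z \mapsto |\phi(u, z)|^2$ is absolutely continuous with derivative $2\phi\cdot\partial_v\phi$, and the boundary condition $|\phi(u, 0)|^2 = |\phi_+(u)|^2 = 1$ combined with FTC yields
\begin{equation}
|\phi(u, v)|^2 - 1 = 2\int_0^v \phi(u, z)\cdot \partial_v\phi(u,z)\,\ud z.
\end{equation}
Next I would apply the same reasoning in $u$ to the integrand $w\mapsto\phi(w,z)\cdot\partial_v\phi(w,z)$. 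For a.e.\ $z$, this is absolutely continuous with $w$-derivative $\partial_u\phi\cdot\partial_v\phi + \phi\cdot f$, using $\partial_u\partial_v\phi = f$ coming from \eqref{eq:lin-nonhom}. The boundary value at $w=0$ vanishes because
\begin{equation}
\phi(0, z)\cdot \partial_v\phi(0, z) = \phi_-(z)\cdot \phi_-'(z) = \tfrac12\dd{z}|\phi_-(z)|^2 = 0
\end{equation}
thanks to $|\phi_-| \equiv 1$. A second application of FTC followed by Fubini to exchange the iterated integrations over $w$ and $z$ then produces \eqref{eq:sphere-valued} after multiplying through by $-1$.

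The main technical obstacle is justifying the absolute continuity of the individual slices and the legitimacy of the product rule at the low regularity allowed by the hypotheses ($\phi_\pm$ only continuous, $f$ merely in $L^1$). This is precisely what Lemma~\ref{lem:exist-partial} provides: the a.e.\ existence of $\partial_u\phi$ and $\partial_v\phi$ via the explicit representations \eqref{eq:g+def}--\eqref{eq:g-def}, together with the ACL-type statement for $\phi$ in each variable. The Fubini-type observation that the exceptional null sets for the $u$- and $v$-slices can be taken compatibly with one another — and with the almost-everywhere defined $f(w,z)$ — closes the argument.
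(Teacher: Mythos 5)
Your argument is correct, but it takes a genuinely different route from the paper. You prove the identity directly at the stated regularity: Fubini plus absolute continuity of the one-dimensional slices $v\mapsto\phi(u,v)$ and $w\mapsto\phi(w,z)\cdot\partial_v\phi(w,z)$, the product rule for absolutely continuous functions, and two applications of the fundamental theorem of calculus, with the boundary terms killed by $|\phi_\pm|\equiv 1$. The paper instead observes that both sides of \eqref{eq:sphere-valued} depend continuously on $(\phi_+,\phi_-,f)\in\dot W^{1,1}\times\dot W^{1,1}\times L^1$ (this is extracted from the proof of Lemma~\ref{lem:trilinear}), reduces to smooth data where the computation is the classical product rule, and concludes by the uniqueness statement for the linear problem \eqref{eq:lin-nonhom-wave}. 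The paper's density argument avoids all bookkeeping with exceptional null sets but leans on the trilinear continuity estimates; your direct argument is more self-contained and elementary, at the price of having to track the null sets (which, as you note, Lemma~\ref{lem:exist-partial} and Fubini handle) and of one integrability check you leave implicit: the final interchange of the $w$- and $z$-integrations requires $\partial_u\phi\cdot\partial_v\phi\in L^1([0,u_0]\times[0,v_0])$, which follows from $g_+\in L_v^\infty L_u^1$ and $g_-\in L_v^1L_u^\infty$ as established in the proof of Lemma~\ref{lem:trilinear}. Note also that your derivation, like the paper's, tacitly uses $\phi_\pm'\in L^1$ (you write $\phi_-'(z)$), which is not literally in the hypotheses of the lemma as stated but is needed even to make sense of $\partial_u\phi$, $\partial_v\phi$ on the right-hand side; this is how the lemma is used in Step~3 of Lemma~\ref{lem:small-data}, so it is not a defect of your proof.
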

\begin{proof}
By the proof of Lemma~\ref{lem:trilinear}, both sides of \eqref{eq:sphere-valued}
are continuous functions
of $(\phi_+, \phi_-, f) \in \dot W^{1, 1}\times \dot W^{1, 1}\times L^1$,
hence we can assume that $\phi_+, \phi_-, f$ are smooth,
so that $\phi$ is smooth as well. We then obtain
\begin{equation}
\begin{aligned}
\partial_u\partial_v (1 - |\phi(u, v)|^2) &= -2\partial_u \phi(u, v)\cdot\partial_v\phi(u, v) - 2\phi(u, v)\cdot\partial_u\partial_v\phi(u, v) \\
&= -2\big(\partial_u \phi(u, v)\cdot\partial_v\phi(u, v) +\phi(u, v)\cdot f(u, v)\big).
\end{aligned}
\end{equation}
We also have $1 - |\phi(u, 0)|^2 = 0$ for all $u \in [0, u_0]$ and $1 - |\phi(0, v)|^2 = 0$
for all $v \in [0, v_0]$. Hence, by uniqueness, \eqref{eq:lin-nonhom} applied with $1 - |\phi|^2$
instead of $\phi$, yields \eqref{eq:sphere-valued}.
\end{proof}
\begin{proof}[Proof of Theorem~\ref{thm:keel-tao}]
Let $\eta_0$ be given by Lemma~\ref{lem:small-data}.
Let $0 = u_1 < u_2 < \ldots < u_m = u_0$ and $0 = v_1 < v_2 < \ldots < v_n = v_0$
be such that
\begin{align}
\int_{u_j}^{u_{j+1}}|\phi_+'(u)|\ud u &< \eta_0, \qquad\text{for all }j \in \{1, \ldots, m-1\}, \\
\int_{v_k}^{v_{k+1}}|\phi_-'(v)|\ud v &< \eta_0, \qquad\text{for all }k \in \{1, \ldots, n-1\}.
\end{align}
By induction with respect to $i := j + k$, we define $\phi$ as the solution
given by Lemma~\ref{lem:small-data} for boundary data given for
$(u, v) \in [u_j, u_{j+1}] \times \{v_k\} \cup \{u_j\} \times [v_k, v_{k+1}]$.
By \eqref{eq:pointwise-1} and \eqref{eq:pointwise-2}, we have
\begin{equation}
\max\bigg( \int_{u_j}^{u_{j+1}}|\partial_u\phi(v_{k+1}, u)|\ud u, \int_{v_k}^{v_{k+1}}|\partial_v\phi(u_{j+1}, v)|\ud v \bigg) < \eta_0,
\end{equation}
so the smallness of the boundary data is preserved at each step of the induction.
\end{proof}

\section{Discrete wave maps}
\label{sec:discrete}
\subsection{Definition of the numerical scheme}
\label{ssec:numerical}
We first introduce some notation.
For $P, Q \in \bR^{d+1}$, we denote $|P|$ the Euclidean norm of $P$
and $P\cdot Q$ the Euclidean inner product.
Given $Q \in \bR^{d+1} \setminus \{0\}$, we denote
\begin{equation}
\cR_Q: \bR^{d+1} \to \bR^{d+1}, \quad \cR_Q P := 2|Q|^{-2}(Q\cdot P)Q - P,
\end{equation}
the reflection with respect to the line spanned by $Q$.
If $Q = 0$, we set
\begin{equation}
\cR_0: \bR^{d+1} \to \bR^{d+1}, \quad \cR_0 P := -P.
\end{equation}
Note that
\begin{equation}
\label{eq:R-exchange}
\cR_{P + Q}P = Q, \quad \cR_{P + Q}Q = P, \qquad\text{for all }P, Q \in \bS^{d}.
\end{equation}
Observe also that the map
\begin{equation}
(\bS^d)^3 \owns (P, Q, S) \mapsto \cR_{P + Q}S \in \bS^d
\end{equation}
is Borel measurable, by considering separately the cases $P + Q \neq 0$ and $P + Q = 0$.
\begin{remark}
The definition of $\cR_0$ is somewhat arbitrary, and other choices would be possible,
including leaving $\cR_0$ undefined. We made a choice which seemed convenient to us.
\end{remark}

We can now introduce a discrete wave maps equation,
which will directly lead to a numerical scheme for solving the characteristic Cauchy problem.
\begin{definition}
Let $M, N \in \bN \cup \{\infty\}$. We say that $Y : \llbracket 0, M\rrbracket \times \llbracket 0, N\rrbracket \to \bS^d$
is a \emph{discrete wave map} with boundary data $Y_+: \llbracket 0, M\rrbracket \to \bS^d$ and $Y_-: \llbracket 0, N\rrbracket \to \bS^d$ if
\begin{gather}
\label{eq:Y-bdry}
Y(m, 0) = Y_+(m), \quad Y(0, n) = Y_-(n), \qquad\text{for all }m, n, \\
\label{eq:Y-cond}
Y(m+1, n+1) = \cR_{Y(m+1, n) + Y(m, n+1)}Y(m, n), \quad \text{for all }m, n.
\end{gather}
\end{definition}
\begin{lemma}
\label{lem:discr-exist}
If $Y_+: \llbracket 0, M\rrbracket \to \bS^d$ and $Y_-: \llbracket 0, N\rrbracket \to \bS^d$ satisfy $Y_-(0) = Y_+(0)$, then there exists a unique discrete wave map with boundary data $Y_\pm$.
For any $(m, n) \in \llbracket 0, M\rrbracket \times \llbracket 0, N\rrbracket$, the map
\begin{equation}
(Y_+\vert_{\llbracket 0, m\rrbracket}, Y_-\vert_{\llbracket 0, n\rrbracket}) \mapsto Y(m, n)
\end{equation}
is Borel-measurable.
\end{lemma}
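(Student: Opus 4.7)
The plan is a straightforward induction on the anti-diagonal index $k = m + n$, which simultaneously yields existence, uniqueness, and the measurability statement. For $k \leq 1$, the values $Y(m, n)$ are prescribed directly by \eqref{eq:Y-bdry}, with consistency at $(0, 0)$ guaranteed by the assumption $Y_+(0) = Y_-(0)$. For the inductive step from level $k \geq 1$ to level $k + 1$, any point $(m, n)$ with $m, n \geq 1$ and $m + n = k + 1$ can be written as $(m'+1, n'+1)$ with $m' + n' = k - 1$; the three points $Y(m', n')$, $Y(m'+1, n')$, $Y(m', n'+1)$ are then already defined by the induction hypothesis, and the recurrence \eqref{eq:Y-cond} prescribes $Y(m, n)$ uniquely. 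Existence and uniqueness are thus obtained in tandem.

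To check that the inductive step preserves membership in $\bS^d$, it suffices to observe that for every $Q \in \bR^{d+1}$ the map $\cR_Q$ is an isometry of $\bR^{d+1}$: for $Q \neq 0$ it is the orthogonal reflection through the line spanned by $Q$, and for $Q = 0$ the convention $\cR_0 P = -P$ is obviously norm-preserving. Hence
\begin{equation}
Y(m'+1, n'+1) = \cR_{Y(m'+1, n') + Y(m', n'+1)} Y(m', n')
\end{equation}
lies in $\bS^d$ as soon as $Y(m', n')$ does, closing the induction.

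For Borel measurability, I would reuse the observation stated immediately after the definition of $\cR_Q$, namely that
\begin{equation}
(\bS^d)^3 \owns (P, Q, S) \mapsto \cR_{P + Q} S \in \bS^d
\end{equation}
is Borel measurable (this is the only place where the convention $\cR_0 P = -P$ plays a role). The map sending $(Y_+\vert_{\llbracket 0, m\rrbracket}, Y_-\vert_{\llbracket 0, n\rrbracket})$ to $Y(m, n)$ is obtained by finitely many compositions of this ternary map with coordinate projections, hence is Borel measurable as a composition of Borel measurable maps. The argument presents no genuine obstacle; the only subtle point, measurability at configurations where $Y(m'+1, n') + Y(m', n'+1) = 0$, is handled precisely by the convention chosen for $\cR_0$ and the corresponding measurability remark.
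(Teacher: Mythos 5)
Your proof is correct and follows essentially the same route as the paper: induction on the anti-diagonal index $m+n$ for existence and uniqueness, and composition of Borel maps (using the measurability of $(P,Q,S)\mapsto\cR_{P+Q}S$) for the last claim. The only difference is that you spell out the verification that the values remain in $\bS^d$ and the role of the convention for $\cR_0$, which the paper leaves implicit.
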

\begin{proof}
By induction with respect to $k \in \bN$, conditions \eqref{eq:Y-bdry} and \eqref{eq:Y-cond}
uniquely determine $Y(m, n)$ for all $(m, n)$ such that $m + n \leq k$.

Measurability follows from the fact that a composition of Borel measurable maps is Borel measurable.
\end{proof}
For a given discrete field $Y: \llbracket 0, M\rrbracket \times\llbracket 0, N\rrbracket \to \bS^d$ (respectively
$Y: \bN^2 \to \bS^d$), we define its extension $\wt Y: [0, M]\times [0, N] \to \bR^{d+1}$ (respectively $\wt Y: [0, \infty)^2\to \bR^{d+1}$)
by the formula
\begin{equation}
\label{eq:wtY-def}
\begin{aligned}
\wt Y(u, v) &:= Y(\floor u, \floor v) + \{u\}\delta_k Y(\floor u, \floor v) \\
&+ \{v\}\delta_n Y(\floor u, \floor v)
+ \{u\}\{v\}\delta_k\delta_n Y(\floor u, \floor v) \\
&= (1 - \{u\})(1-\{v\})Y(\floor u, \floor v) \\
&+ \{u\}(1-\{v\})Y(\floor u+1, \floor v) \\
&+ (1-\{u\})\{v\}Y(\floor u, \floor v+1) \\
&+\{u\}\{v\}Y(\floor u+1, \floor v+1),
\end{aligned}
\end{equation}
so that $\wt Y$ agrees with $Y$ at integer points, and on each elementary square, it is affine separately in each variable. In particular, $\wt Y \in C([0, M]\times [0, N])$.
If $u = M$, then $Y(\floor u + 1, v)$ and $Y(\floor u +1, \floor v+1)$ are not defined, but these terms
are multiplied by $\{u\} = 0$, so the formula still makes sense (a similar remark applies to the case $v = N$).
Note that the image of $\wt Y$ is a subset of $\bR^{d+1}$, but in general not a subset of $\bS^d$.
\begin{lemma}
\label{lem:ext-cont}
If $Y: \llbracket 0, M\rrbracket \times\llbracket 0, N\rrbracket \to \bS^d$ and $\wt Y: [0, M]\times [0, N] \to \bR^{d+1}$
is its extension defined above, then
\begin{equation}
\label{eq:ext-lip}
\|\partial_u \wt Y\|_{L^\infty} \leq \|\delta_m Y\|_{\ell^\infty}, \qquad \|\partial_v \wt Y\|_{L^\infty} \leq \|\delta_n Y\|_{\ell^\infty}.
\end{equation}
Moreover, for all $r \geq 2\sqrt 2$
\begin{equation}
\label{eq:ext-cont}
\sup_{|(\sh u - u, \sh v - v)| \leq r}|\wt Y(\sh u, \sh v) - \wt Y(u, v)| \leq \sup_{|\sh m - m| + |\sh n - n| \leq 2r}|Y(\sh m, \sh n) - Y(m, n)|.
\end{equation}
\end{lemma}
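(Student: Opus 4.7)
Both estimates should follow from the explicit bilinear structure of $\wt Y$ on each elementary square, and in particular from the second (convex-combination) form of \eqref{eq:wtY-def}. I plan to treat the Lipschitz bounds first, then use a convex-combination rearrangement to get the modulus-of-continuity estimate.

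For \eqref{eq:ext-lip} the idea is a direct computation. On each elementary square $[m,m+1]\times [n,n+1]$ (with $m \leq M-1$, $n \leq N-1$), the map $\wt Y$ is bilinear in $(u,v)$, so
\begin{equation}
\partial_u \wt Y(u, v) = (1 - \{v\})\,\delta_m Y(m, n) + \{v\}\,\delta_m Y(m, n+1),
\end{equation}
which, being a convex combination of two consecutive first differences, has magnitude at most $\|\delta_m Y\|_{\ell^\infty}$. The analogous computation gives the bound on $\partial_v \wt Y$. At integer lines one can either argue by one-sided derivatives or note that $\wt Y$ is Lipschitz and that the formula above already establishes the Lipschitz constant on a dense open set.

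For \eqref{eq:ext-cont} the plan is to exploit that the second formula in \eqref{eq:wtY-def} expresses $\wt Y(u, v)$ as a genuine convex combination of the four values $Y(\floor u + i, \floor v + j)$, $i, j \in \{0, 1\}$, with weights $\alpha_{ij}(u,v) := \big((1-\{u\})^{1-i}\{u\}^i\big)\big((1-\{v\})^{1-j}\{v\}^j\big)$ summing to $1$. Writing both $\wt Y(u,v)$ and $\wt Y(\sh u, \sh v)$ in this way and using $\sum_{i,j}\alpha_{ij}\sum_{i',j'}\beta_{i'j'}=1$, one obtains
\begin{equation}
\wt Y(\sh u, \sh v) - \wt Y(u, v) = \sum_{i,j,i',j' \in \{0,1\}} \alpha_{i'j'}(u, v)\,\beta_{ij}(\sh u, \sh v)\,\bigl[Y(\sh m + i, \sh n + j) - Y(m + i', n + j')\bigr],
\end{equation}
where $(m,n)=(\floor u,\floor v)$ and $(\sh m,\sh n)=(\floor{\sh u},\floor{\sh v})$. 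Since the weights are non-negative and sum to $1$, the left-hand side is bounded by the maximum over $(i,j,i',j')$ of the Euclidean norm on the right, which is in turn controlled by the right-hand side of \eqref{eq:ext-cont} provided every index pair appearing in the sum satisfies $|(\sh m + i) - (m + i')| + |(\sh n + j) - (n + j')| \leq 2r$.

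Thus the heart of the argument is the combinatorial estimate: from $|\floor{\sh u} - \floor u| \leq |\sh u - u| + 1$ (and similarly in $v$) together with $|i - i'|, |j - j'| \leq 1$, one has the crude bound $|(\sh m + i) - (m + i')| + |(\sh n + j) - (n + j')| \leq |\sh u - u| + |\sh v - v| + 4$, which by $\ell^1$--$\ell^2$ comparison is at most $\sqrt 2\, r + 4$. I expect the main obstacle to be squeezing this down to $2r$ under the stated hypothesis $r \geq 2\sqrt 2$: the worst case for the ``$+4$'' slack is attained only when $(u,v)$ and $(\sh u,\sh v)$ straddle integer lines, in which case the corresponding weights $\alpha_{i'j'}\beta_{ij}$ responsible for the largest index jumps carry small mass. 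The cleanest way to close the gap will be a short case split on how many of $u, \sh u, v, \sh v$ lie in the same integer interval as their counterpart; in each case either $|\sh u - u| + |\sh v - v|$ alone is already $\leq 2r - 2$, or the worst-case corners contribute a cancellation in the first index, which reduces the effective $\ell^1$ distance by $1$ in each ``straddling'' coordinate. Carried out carefully, this case analysis should yield $|(\sh m + i) - (m + i')| + |(\sh n + j) - (n + j')| \leq 2r$ exactly when $r \geq 2\sqrt 2$.
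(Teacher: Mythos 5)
Your treatment of \eqref{eq:ext-lip} is correct and coincides with the paper's (the paper phrases it via the difference $\wt Y(m_0+1,v_0)-\wt Y(m_0,v_0)$ and affineness in $u$; you differentiate directly — same content). For \eqref{eq:ext-cont} you also take the paper's route: write both values as convex combinations of the four corner values and bound the difference by the worst corner pair with positive weight. The gap is in the final step, which you correctly single out as the crux but do not close — and which in fact cannot be closed in the form you propose. It is simply not true that every positive-weight corner pair satisfies $|(\sh m+i)-(m+i')|+|(\sh n+j)-(n+j')|\le 2r$. Take $r=2\sqrt 2$, $(u,v)=(\tfrac12,\tfrac12)$, $(\sh u,\sh v)=(\tfrac52,\tfrac52)$: then $|(\sh u-u,\sh v-v)|=2\sqrt 2=r$, all sixteen weights equal $\tfrac1{16}>0$, and the pair $\big((0,0),(3,3)\big)$ appears with positive weight at $\ell^1$ distance $6>2r=4\sqrt 2\approx 5.66$. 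So no case analysis on which corners carry mass can deliver the inequality you need. Your quantitative sketch also falls short on its own terms: even granting a reduction of the slack from $+4$ to $+2$, the inequality $\sqrt 2\,r+2\le 2r$ requires $r\ge 2+\sqrt 2>2\sqrt 2$.

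You should know that the paper's own proof has exactly the same defect: it asserts the chain $|\sh m-m|+|\sh n-n|\le \sqrt 2\,r+4\le 2r$, and the second inequality holds only for $r\ge 4+2\sqrt 2$. So you have put your finger on a genuine weak point of the lemma as stated, even though your proposed repair does not work. Two honest fixes are available: (i) prove \eqref{eq:ext-cont} only for $r\ge 4+2\sqrt 2$, where the crude bound suffices, and absorb the intermediate range $2\sqrt 2\le r\le 4+2\sqrt 2$ into the Lipschitz estimate \eqref{eq:ext-lip}, exactly as is already done for $r\le 2\sqrt 2$ in Step~1 of the proof of Theorem~\ref{thm:main} (this only changes the numerical constants $32$ and $64$ there); or (ii) keep $r\ge 2\sqrt 2$ but weaken the right-hand side of \eqref{eq:ext-cont} to a supremum over $|\sh m-m|+|\sh n-n|\le \sqrt 2\,r+4$. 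Either way, the missing ingredient is not a finer analysis of the corners but an adjustment of the constants; as written, both your argument and the paper's stop one false inequality short of the stated claim.
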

\begin{proof}
Let $m_0 \in \llbracket 0, M-1\rrbracket$, $v_0 \in [0, N]$, and let $s \in [0, 1]$ and $n_0 \in \llbracket 0, N-1\rrbracket$
be such that $v_0 = (1-t)n_0 + t(n_0+1)$. Then $\wt Y(m_0, v_0) = (1-s)Y(m_0, n_0) + sY(m_0, n_0+1)$ and $\wt Y(m_0+1, v_0) = (1-s)Y(m_0+1, n_0) + sY(m_0+1, n_0+1)$, thus
\begin{equation}
|\wt Y(m_0+1, v_0) - \wt Y(m_0, v_0)| \leq \|\delta_m Y\|_{\ell^\infty}.
\end{equation}
Since the function $[0, M] \owns u \mapsto \wt Y(u, v_0)$ is affine on every interval $[m_0, m_0+1]$, we obtain the first bound
in \eqref{eq:ext-lip}. The second bound in \eqref{eq:ext-lip} is analogous.

Let $r \geq 2\sqrt 2$, $(u, v), (\sh u, \sh v) \in [0, M]\times [0, N]$ and $|(\sh u - u, \sh v - v)| \leq r$.
Let $(m_j, n_j)_{j=1}^4$ be the vertices of an elementary square containing $(u, v)$,
and $(\sh m_j, \sh n_j)_{j=1}^4$ the vertices of an elementary square containing $(\sh u, \sh v)$.
Since $\wt Y(u, v)$ belongs to the convex hull of
$(Y(m_j, n_j))_{j=1}^4$, and $\wt Y(\sh u, \sh v)$ belongs to the convex hull of $(Y(\sh m_j, \sh n_j))_{j=1}^4$,
there exist $(m, n) = (m_j, n_j)$ and $(\sh m, \sh n) = (\sh m_j, \sh n_j)$ such that
\begin{equation}
|Y(\sh u, \sh v) - Y(u, v)| \leq |Y(\sh m, \sh n) - Y(m, n)|.
\end{equation}
Since
\begin{equation}
|\sh m - m| + |\sh n - n| \leq |\sh u- u| + 2 + |\sh v - v|+2 \leq r\sqrt 2 + 4 \leq 2r,
\end{equation}
the bound \eqref{eq:ext-cont} follows.
\end{proof}

For any boundary data $\phi_\pm \in X([0, \infty))$, we can now define
a sequence of numerical approximations of the corresponding wave map (whose existence is,
for the moment, neither assumed nor claimed).
For $N \in \bN$, the $N$-th approximation is given by the numerical scheme with the mesh size $2^{-N}$.

\begin{definition}
\label{def:phiN-def}
Given $\phi_\pm \in X([0, \infty))$, let $Y: \bN^2 \to \bS^d$ be the unique discrete wave map with the boundary data $Y_\pm(n) := \phi_\pm(2^{-N}n)$, and let $\wt Y$ be its extension \eqref{eq:wtY-def}.
We define $\phi_N = \Phi_N(\phi_-, \phi_+) \in C([0, \infty)^2)$ by the formula
\begin{equation}
\label{eq:phiN-def}
\phi_N(u, v) := \wt Y(2^N u, 2^N v).
\end{equation}
\end{definition}
\begin{remark}
Note that for every $N$ the map $\Phi_N : X([0, \infty)) \times X([0, \infty)) \to
C([0, \infty)^2)$ is Borel.
\end{remark}

\subsection{Convergence for absolutely continuous boundary data}
\label{ssec:scheme-conv}
\begin{theorem}
\label{thm:scheme-conv}
If $\phi_+', \phi_-' \in L^1_\tx{loc}([0, \infty))$, $\phi_+(0) = \phi_-(0)$
and $\phi$ is the solution of \eqref{eq:wm-cauchy} given by Theorem~\ref{thm:keel-tao}, then
\begin{equation}
\label{eq:scheme-conv}
\lim_{N \to \infty} \Phi_N(\phi_+, \phi_-) = \phi\qquad\text{in }C([0, \infty)^2).
\end{equation}
\end{theorem}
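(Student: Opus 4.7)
The plan is to invoke stability of the discrete scheme — the long time perturbation Lemma~\ref{lem:l-time-pertb} — applied to an approximate discrete solution built by sampling the continuous wave map $\phi$ at the mesh points. As in the proof of Theorem~\ref{thm:keel-tao}, I would first use Proposition~\ref{prop:gluing} together with the pointwise conservation laws \eqref{eq:pointwise-1}--\eqref{eq:pointwise-2} to reduce the problem to a single rectangle $[0, u_0] \times [0, v_0]$ on which $\|\phi_+'\|_{L^1}$ and $\|\phi_-'\|_{L^1}$ lie below the small-data threshold required for the discrete contraction argument. A gluing over a fixed finite partition then upgrades local convergence to convergence on any compact rectangle, and hence to convergence in the Fr\'echet space $C([0, \infty)^2)$.

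On such a small rectangle, for each $N$ I would set $Z_N(m, n) := \phi(m 2^{-N}, n 2^{-N})$ and study the pointwise defect
\begin{equation}
e_N(m, n) := Z_N(m+1, n+1) - \cR_{Z_N(m+1, n) + Z_N(m, n+1)} Z_N(m, n).
\end{equation}
Unfolding the reflection on the sphere gives $\cR_{P+Q} S = P + Q - S + R(P, Q, S)$, with a remainder $R$ quadratic in $P - S$ and $Q - S$ which encodes the spherical constraint. The linear part $Z_N(m+1, n) + Z_N(m, n+1) - Z_N(m, n)$ differs from $Z_N(m+1, n+1)$ exactly by the integral of $\partial_u \partial_v \phi = -(\partial_u \phi \cdot \partial_v \phi) \phi$ over the mesh cell, by \eqref{eq:wm-null}. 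Matching this integral against $R$ cell by cell, the total $\ell^1$-norm of $e_N$ is controlled by a modulus-of-continuity expression for $\phi$, $\partial_u \phi$, $\partial_v \phi$ times $\|\partial_u \phi \cdot \partial_v \phi\|_{L^1}$; since both factors are finite by Lemma~\ref{lem:trilinear} and the modulus tends to $0$ with the mesh, I expect $\|e_N\|_{\ell^1} \to 0$.

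The main obstacle is to make this defect estimate rigorous for merely absolutely continuous data, where $\phi$ has no classical second derivatives. My plan is to argue by density: for $C^2$ boundary data, a direct Taylor expansion of $\phi$ yields $|e_N(m, n)| = O(2^{-3N})$ per cell, whence $\|e_N\|_{\ell^1} = O(2^{-N})$; for general $\phi_\pm$ with $\phi_\pm' \in L^1_{\tx{loc}}$, I would approximate them by smooth $\phi_\pm^{(k)}$ and combine the continuous-data continuity statement of Theorem~\ref{thm:keel-tao} with the parallel discrete continuity statement provided by Lemma~\ref{lem:l-time-pertb} (applied to two exact discrete wave maps with nearby boundary data), closing a standard three-$\varepsilon$ argument. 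Once $\|e_N\|_{\ell^1} \to 0$ is established, Lemma~\ref{lem:l-time-pertb} forces the genuine discrete wave map $Y_N$ with boundary data $\phi_\pm(n 2^{-N})$ to be uniformly close to $Z_N$ on the mesh; Lemma~\ref{lem:ext-cont} and the uniform continuity of $\phi$ on compact sets then give $\|\Phi_N(\phi_+, \phi_-) - \phi\|_{L^\infty([0, u_0] \times [0, v_0])} \to 0$, which is the desired convergence.
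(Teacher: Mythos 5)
Your overall architecture coincides with the paper's: sample $\phi$ at the mesh points, regard the resulting array as a discrete wave map with external forcing $F_\tx e$ equal to the one-cell defect, show $\|F_\tx e\|_{\ell^1}\to 0$, and conclude with Lemma~\ref{lem:l-time-pertb} and Lemma~\ref{lem:ext-cont}. Two remarks. First, your preliminary reduction to small rectangles via Proposition~\ref{prop:gluing} is redundant: Lemma~\ref{lem:l-time-pertb} is precisely the tool that removes the smallness restriction (only $\|\delta Y_\pm\|_{\ell^\infty}\leq \eta_0/8$ is needed, which holds for large $N$ by absolute continuity of the integral), and the paper applies it directly on $[0,2^L]^2$; if you instead glue, you must track the $\ell^1$ mismatch of the interface data and cope with non-dyadic partition points, i.e.\ essentially re-run the induction already contained in Lemma~\ref{lem:l-time-pertb}. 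Second, and more substantively, your treatment of the defect genuinely differs from the paper's. The paper proves a quantitative cell estimate (Lemma~\ref{lem:cell-sol}) valid directly for absolutely continuous data: using the trilinear estimate of Lemma~\ref{lem:trilinear} and the formula of Lemma~\ref{lem:R-formula}, the defect on a cell is bounded by $\|\phi_+'\|_{L^1}\|\phi_-'-\mathrm{avg}\|_{L^1}+\|\phi_-'\|_{L^1}\|\phi_+'-\mathrm{avg}\|_{L^1}+(\|\phi_+'\|_{L^1}+\|\phi_-'\|_{L^1})\|\phi_+'\|_{L^1}\|\phi_-'\|_{L^1}$ (all norms over the cell), and summing over cells, the pointwise conservation laws together with Lemma~\ref{lem:L1-lemma} (the averaging operators $T_N\to 0$ strongly on $L^1$) give $\|F_\tx e\|_{\ell^1}\to 0$ with no smoothing whatsoever. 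You instead do the Taylor computation for smooth data --- which is correct: the $h^2\partial_u\partial_v\phi$ term cancels against the quadratic correction $-2[(S-P)\cdot(S-Q)]\cI(P+Q)$ because $\cI(P+Q)=\phi/2+O(h)$ and $\partial_u\partial_v\phi=-(\partial_u\phi\cdot\partial_v\phi)\phi$ --- and then close by density, pairing the continuity statement of Theorem~\ref{thm:keel-tao} with Lemma~\ref{lem:l-time-pertb} applied with $F_\tx e=0$ as a uniform-in-$N$ discrete stability estimate. This three-$\varepsilon$ argument is sound, since the constant in Lemma~\ref{lem:l-time-pertb} depends only on $A$ and $B$ and the sampled $\ell^1$ differences are dominated by the $\dot W^{1,1}$ distance of the boundary data; the only extra (routine) step you owe is the construction of smooth $\bS^d$-valued approximations with matching values at the origin. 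In short, the paper's route is self-contained and quantitative at the level of $L^1$ data, while yours trades Lemma~\ref{lem:L1-lemma} for a classical Taylor expansion plus a density argument; both reach the same conclusion.
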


In order to prove this theorem, we develop a well-posedness theory for the
characteristic Cauchy problem for discrete wave maps equation with a forcing term.
Our approach is to adapt to the discrete setting the arguments from \cite{Zhou_1999}, and especially \cite[Section 9]{Keel+Tao_1998}.

\begin{definition}
\label{def:sol-Fe-discr}
Let $M, N \in \bN \cup \{\infty\}$.
We say that $Y : \llbracket 0, M \rrbracket \times \llbracket 0, N \rrbracket \to \bS^d$
is a \emph{discrete wave map} with \emph{boundary data} $Y_+: \llbracket 0, M \rrbracket \to \bS^d$,
$Y_-: \llbracket 0, N \rrbracket \to \bS^d$ and \emph{external forcing} $F_\tx e:\llbracket 0, M-1 \rrbracket
\times \llbracket 0, N-1 \rrbracket \to \bR^{d+1}$ if
\begin{gather}
\label{eq:Y-bdry-F}
Y(n, 0) = Y_+(n), \quad Y(0, n) = Y_-(n), \qquad\text{for all }n, \\
\label{eq:Y-cond-F}
Y(m+1, n+1) = \cR_{Y(m+1, n) + Y(m, n+1)}Y(m, n) + F_\tx e(m, n), \qquad\text{for all }m, n.
\end{gather}
\end{definition}
The existence and the uniqueness of solutions for given boundary data and forcing are clear,
but we are interested here in sharp estimates on the sensitivity of the solution
to changing the boundary data or the external forcing.
First, we rewrite the discrete wave maps equation so as to exhibit
the linear and the nonlinear parts.
\begin{lemma}
\label{lem:R-formula}
If $P, Q, S \in \bS^d$ and $P + Q \neq 0$, then
\begin{equation}
\cR_{P+Q}S = P + Q - S - 2[(S - P)\cdot (S - Q)]\cI(P + Q),
\end{equation}
where $\cI(P+Q) := |P+Q|^{-2}(P+Q)$ is the inversion in the unit sphere.
\end{lemma}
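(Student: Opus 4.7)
The plan is to prove the identity by direct algebraic manipulation, using the definition of $\cR_{P+Q}$ together with the constraints $|P|=|Q|=|S|=1$. The identity to be shown is, after rearrangement and multiplication by $|P+Q|^2$, equivalent to the scalar identity
\begin{equation}
2(P+Q)\cdot S = |P+Q|^2 - 2(S-P)\cdot(S-Q),
\end{equation}
which I expect to hold for purely algebraic reasons once one expands everything and uses $|P|^2 = |Q|^2 = |S|^2 = 1$.

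Concretely, I would proceed as follows. First, unpack the definition to write
\begin{equation}
\cR_{P+Q}S = 2|P+Q|^{-2}\big((P+Q)\cdot S\big)(P+Q) - S.
\end{equation}
The claimed formula can be rewritten as
\begin{equation}
\cR_{P+Q}S + S = \big(|P+Q|^2 - 2(S-P)\cdot(S-Q)\big)\cI(P+Q),
\end{equation}
so it suffices to verify the scalar equality displayed above. I would then expand $|P+Q|^2 = 2 + 2P\cdot Q$ and $(S-P)\cdot(S-Q) = 1 - S\cdot Q - P\cdot S + P\cdot Q$, and check that the right-hand side simplifies to $2S\cdot Q + 2P\cdot S = 2(P+Q)\cdot S$, as required.

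There is really no obstacle here: the statement is an elementary identity in Euclidean geometry, whose only subtle point is the hypothesis $P+Q \neq 0$ needed to make $\cI(P+Q)$ and $\cR_{P+Q}$ well-defined in the original sense. The purpose of the lemma is evidently to prepare for perturbation estimates in the next subsection by separating $\cR_{P+Q}S$ into the affine part $P + Q - S$ (which will play the role of the d'Alembertian for the linear wave equation) and a quadratic remainder proportional to $(S-P)\cdot(S-Q)$ (which will correspond to the nonlinearity $-(\partial_u\phi\cdot\partial_v\phi)\phi$ in the continuous equation).
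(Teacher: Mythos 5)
Your proposal is correct and follows essentially the same route as the paper: rewrite $\cR_{P+Q}S$ using the definition, reduce the claim to the scalar identity $2(P+Q)\cdot S - |P+Q|^2 = -2(S-P)\cdot(S-Q)$, and verify it by expanding with $|P|=|Q|=|S|=1$. The expansion you indicate checks out, so nothing is missing.
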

\begin{proof}
By the definition of $\cR$, we have
\begin{equation}
\begin{aligned}
\cR_{P+Q}S &= 2\frac{(P+Q)\cdot S}{|P + Q|^2}(P+Q) - S \\
&= \big[2(P+Q)\cdot S - |P+Q|^2\big] \cI(P+Q)  + P + Q - S.
\end{aligned}
\end{equation}
Hence it suffices to observe that
\begin{equation}
2(P+Q)\cdot S - |P + Q|^2 = -2(S - P)\cdot (S - Q),
\end{equation}
since  $P, Q, S \in \bS^d$.
\end{proof}
\begin{lemma}
\label{lem:inversion-lip}
Let $M , N \in \bN$. If $Y, \sh Y \in \ell^\infty(\llbracket 0, M\rrbracket \times \llbracket 0, N\rrbracket)$ are such that
\begin{equation}
\label{eq:modY-12}
\min(|Y(m, n)|, |\sh Y(m, n)|) \geq \frac 12,\qquad\text{for all }(m, n) \in \llbracket 0, M\rrbracket \times \llbracket 0, N\rrbracket
\end{equation}
and
\begin{equation}
\label{eq:Ylinfty-small}
\max\big(\|\delta_m Y\|_{\ell^\infty}, \|\delta_n Y\|_{\ell^\infty}, \|\delta_m \sh Y\|_{\ell^\infty}, \|\delta_n \sh Y\|_{\ell^\infty}\big) \leq \frac{\sqrt 2}{4},
\end{equation}
then
\begin{gather}
\label{eq:inversion-Linfty}
\|\cI((E_m Y + E_n Y)/2)\|_{\ell^\infty} \leq 2\sqrt 2, \\
\label{eq:inversion-lip}
\|\cI((E_m\sh Y + E_n\sh Y)/2) - \cI((E_m Y + E_n Y)/2)\|_{\ell^\infty} \leq
8 \|\sh Y - Y\|_{\ell^\infty}.
\end{gather}
\end{lemma}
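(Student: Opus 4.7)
The plan is to derive both estimates from the exact identity
\[
|\cI(Q_1) - \cI(Q_2)| = \frac{|Q_1 - Q_2|}{|Q_1|\,|Q_2|}, \qquad Q_1, Q_2 \in \bR^{d+1}\setminus\{0\},
\]
together with a pointwise lower bound of the form $|(E_m Y + E_n Y)(m, n)/2| \geq \sqrt 2/4$ (and the same with $\sh Y$ in place of $Y$). Granting these, \eqref{eq:inversion-Linfty} follows with constant $1/(\sqrt 2/4) = 2\sqrt 2$, and the identity yields \eqref{eq:inversion-lip} with constant $1/(\sqrt 2/4)^2 = 8$.

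I will first establish the lower bound via the parallelogram law. Fixing $(m, n)$ and setting $P := Y(m+1, n)$, $R := Y(m, n+1)$, I have
\[
|P + R|^2 = 2|P|^2 + 2|R|^2 - |P - R|^2.
\]
Hypothesis \eqref{eq:modY-12} gives $|P|^2, |R|^2 \geq 1/4$, and since $P - R = \delta_m Y(m, n) - \delta_n Y(m, n)$, hypothesis \eqref{eq:Ylinfty-small} yields $|P - R| \leq \tfrac{\sqrt 2}{4} + \tfrac{\sqrt 2}{4} = \tfrac{\sqrt 2}{2}$, hence $|P - R|^2 \leq 1/2$. Combining, $|P + R|^2 \geq 1/2$, so $|(P + R)/2| \geq \sqrt 2/4$. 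The identical calculation with $\sh Y$ replacing $Y$ gives the analogous bound.

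Next, I will check the identity by direct computation. Writing $r_i := |Q_i|$ and expanding,
\[
|Q_1 r_2^2 - Q_2 r_1^2|^2 = r_1^2 r_2^4 - 2 r_1^2 r_2^2 (Q_1 \cdot Q_2) + r_1^4 r_2^2 = r_1^2 r_2^2 |Q_1 - Q_2|^2;
\]
dividing by $r_1^4 r_2^4$ and taking square roots gives the claimed formula. Applying it pointwise with $Q_1 = (E_m \sh Y + E_n \sh Y)(m, n)/2$ and $Q_2 = (E_m Y + E_n Y)(m, n)/2$, using the lower bounds $|Q_1|, |Q_2| \geq \sqrt 2/4$, and noting the trivial estimate $|Q_1 - Q_2| \leq \|\sh Y - Y\|_{\ell^\infty}$, I obtain \eqref{eq:inversion-lip}.

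The only mildly subtle point will be the identity itself. A naive Lipschitz estimate integrating $D\cI$ along the segment from $Q_2$ to $Q_1$ does not work, because that segment can pass arbitrarily close to the origin even when both endpoints are bounded well away from it (e.g.\ if $Q_1 \approx -Q_2$), so the lower bounds on $|Q_i|$ alone do not control $\sup_t 1/|Q_t|^2$. The global identity circumvents the segment entirely and provides exactly the sharp constant $8$ needed for \eqref{eq:inversion-lip}.
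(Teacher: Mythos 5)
Your proof is correct and follows essentially the same route as the paper: a parallelogram-law lower bound $|(E_mY+E_nY)/2|\geq \sqrt 2/4$, followed by the exact identity $|\cI(Q_1)-\cI(Q_2)|=|Q_1-Q_2|/(|Q_1||Q_2|)$. The only cosmetic difference is that you verify this identity by direct algebraic expansion, whereas the paper obtains it from the similarity of the triangles with vertices $\{0,\cI(P),\cI(\sh P)\}$ and $\{0,\sh P,P\}$ — the same fact.
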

\begin{proof}
Fix $(m, n) \in \llbracket 0, M-1\rrbracket \times \llbracket 0, M-1\rrbracket$, and set $P := \frac 12(Y(m+1, n) + Y(m, n+1))$
and $\sh P := \frac 12(\sh Y(m+1, n) + Y(m, n+1))$.
By \eqref{eq:modY-12}, \eqref{eq:Ylinfty-small} and the Parallelogram Law, we have
$|P| \geq \frac{\sqrt 2}{4}$ and $|\sh P| \geq \frac{\sqrt 2}{4}$,
which implies
\begin{equation}
|\cI(P)| = |P|^{-1} \leq 2\sqrt 2.
\end{equation}

Observe that the triangle with vertices $0$, $\cI(P)$
and $\cI(\sh P)$ is similar to the triangle
with vertices $0$, $\sh P$
and $P$, which implies
\begin{equation}
\begin{aligned}
\frac{|\cI(\sh P) - \cI(P)|}{|\sh P - P|} = \frac{|\cI(\sh P)|}{|P|}
= \frac{1}{|P||\sh P|} \leq 8.
\end{aligned}
\end{equation}
By the triangle inequality, $|\sh P - P| \leq 8\|\sh Y - Y\|_{\ell^\infty}$, hence $|\cI(\sh P) - \cI(P)| \leq 8\|\sh Y - Y\|_{\ell^\infty}$.
\end{proof}

Let $M, N \in \bN^*$, $Y_+ : \llbracket 0, M \rrbracket \to \bR^{d+1}$,
$Y_-: \llbracket 0, N \rrbracket \to \bR^{d+1}$ and $F: \llbracket 0, M-1\rrbracket \times \llbracket 0, N-1\rrbracket \to \bR^{d+1}$.
Then the unique solution $Y: \llbracket 0, M\rrbracket \times \llbracket 0, N\rrbracket \to \bR^{d+1}$ of the problem
\begin{gather}
\label{eq:Ylin-bdry-1}
Y(m, 0) = Y_+(m), \qquad\text{for all }m, \\
\label{eq:Ylin-bdry-2}
Y(0, n) = Y_-(n), \qquad\text{for all }n, \\
\label{eq:Ylin-cond}
\delta_m\delta_n Y(m, n) = F(m, n), \qquad\text{for all }m, n
\end{gather}
is given by the formula
\begin{equation}
\label{eq:discr-sol}
Y(m, n) = Y_+(m) + Y_-(n) - Y_+(0) + \sum_{j=0}^{m-1}\sum_{k=0}^{n-1}F(j, k), \qquad\text{for all }m, n.
\end{equation}
Note that
\begin{align}
\delta_m Y(m, n) &= \delta Y_+(m) + \sum_{k=0}^{n-1}F(m, k), \\
\delta_n Y(m, n) &= \delta Y_-(n) + \sum_{j=0}^{m-1}F(j, n),
\end{align}
which are discrete analogs of \eqref{eq:g+def} and \eqref{eq:g-def}.

Given $M, N \in \bN^*$, $Y_+ \in \ell^1(\llbracket 0, M\rrbracket)$ and
$Y_- \in \ell^1(\llbracket 0, N\rrbracket)$ such that $Y_+(0) = Y_-(0)$,
and $F \in \ell^1(\llbracket 0, M-1\rrbracket \times \llbracket 0, N-1\rrbracket)$,
we set
\begin{equation}
\Psi(Y_+, Y_-, F):={-}\delta_m Y\cdot\delta_n Y \cI((E_m Y + E_n Y)/2),
\end{equation}
where $Y = Y(Y_+, Y_-, F)$ is given by \eqref{eq:discr-sol}
and $E$ is the shift operator defined in Section~\ref{ssec:notation}.

\begin{lemma}
\label{lem:trilinear-discr}
There exists $C> 0$ such that the following holds.
Let $M, N \in \bN^*$ and let $\Psi$ be the mapping defined above.
Let $Y_+, \sh Y_+: \llbracket 0, M\rrbracket \to\bR^{d+1}$,
$Y_-, \sh Y_-: \llbracket 0, N\rrbracket\to\bR^{d+1}$ and $F, \sh F: \llbracket 0, M-1\rrbracket\times\llbracket 0, N-1\rrbracket\to\bR^{d+1}$
be such that
\begin{equation}
Y_+(0) = Y_-(0),\quad \frac 78 \leq |Y_+(0)| \leq \frac 98,\quad \sh Y_+(0) = \sh Y_-(0),
\quad \frac 78 \leq |\sh Y_+(0)| \leq \frac 98
\end{equation}
and
\begin{equation}
\label{eq:trilinear-discr-ass}
\begin{aligned}
\max\big(\|\delta Y_+\|_{\ell^1}, \|\delta Y_-\|_{\ell^1}, \|F\|_{\ell^1}, \|\delta\sh Y_+\|_{\ell^1}, \|\delta\sh Y_-\|_{\ell^1}, \|\sh F\|_{\ell^1}\big) \leq \frac 18.
\end{aligned}
\end{equation}
Then
\begin{equation}
\label{eq:trilinear-0-discr}
\|\Psi(Y_+, Y_-, F)\|_{\ell^1} \leq C(\|\delta Y_+\|_{\ell^1} + \|F\|_{\ell^1})
(\|\delta Y_-\|_{\ell^1} + \|F\|_{\ell^1})
\end{equation}
and
\begin{equation}
\label{eq:trilinear-1-discr}
\begin{aligned}
&\|\Psi(\sh Y_+, \sh Y_-, \sh F) - \Psi(Y_+, Y_-, F)\|_{\ell^1} \lesssim \\
&\quad C\big((\|\delta Y_+\|_{\ell^1} + \|F\|_{\ell^1})(\|\delta \sh Y_- - \delta Y_-\|_{\ell^1} + \|\sh F - F\|_{\ell^1}) +\\
&\quad+ (\|\delta \sh Y_-\|_{\ell^1} + \|\sh F\|_{\ell^1})(\|\delta \sh Y_+ - \delta Y_+\|_{\ell^1} + \|\sh F - F\|_{\ell^1}) \\
&\quad+ (\|\delta \sh Y_+\|_{\ell^1} + \|\sh F\|_{\ell^1})(\|\delta Y_-\|_{\ell^1} + \|\sh F\|_{\ell^1}) \times \\
&\qquad \times\big(|\sh Y_+(0) - Y_+(0)| + \|\delta \sh Y_+ - \delta Y_+\|_{\ell^1} + \|\delta\sh Y_- - \delta Y_-\|_{\ell^1} + \|\sh F - F\|_{\ell^1}\big)\big).
\end{aligned}
\end{equation}
\end{lemma}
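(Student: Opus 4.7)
The plan is to mirror the proof of Lemma~\ref{lem:trilinear} step by step, with the continuous $L^\infty$, $L^\infty_v L^1_u$, $L^1_v L^\infty_u$ spaces replaced by their discrete analogs $\ell^\infty_{m,n}$, $\ell^\infty_n \ell^1_m$ and $\ell^1_n \ell^\infty_m$, and with the pointwise nonlinearity $(g_+\cdot g_-)\phi$ replaced by $\delta_m Y\cdot\delta_n Y \cdot \cI((E_m Y + E_n Y)/2)$. The one genuinely new ingredient, compared to the continuous case, is the inversion factor $\cI$, which is controlled by Lemma~\ref{lem:inversion-lip}.

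First, I would define the discrete currents
\begin{equation}
G_+(m,n) := \delta_m Y(m,n) = \delta Y_+(m) + \sum_{k=0}^{n-1} F(m,k),\qquad
G_-(m,n) := \delta_n Y(m,n) = \delta Y_-(n) + \sum_{j=0}^{m-1} F(j,n),
\end{equation}
and note the clean summability bounds $\|G_+\|_{\ell^\infty_n \ell^1_m} \leq \|\delta Y_+\|_{\ell^1} + \|F\|_{\ell^1}$ and $\|G_-\|_{\ell^1_n \ell^\infty_m} \leq \|\delta Y_-\|_{\ell^1} + \|F\|_{\ell^1}$, which are the discrete analogs of the $\dot W^{1,1}\oplus L^1$ bounds on $g_\pm$. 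Using \eqref{eq:discr-sol} together with \eqref{eq:trilinear-discr-ass}, I get the pointwise bounds $\tfrac 12 \leq |Y(m,n)| \leq \tfrac 32$ and $\max(\|\delta_m Y\|_{\ell^\infty}, \|\delta_n Y\|_{\ell^\infty}) \leq \tfrac 14 \leq \tfrac{\sqrt 2}{4}$, so that Lemma~\ref{lem:inversion-lip} applies to both $Y$ and $\sh Y$, yielding
\begin{equation}
\|\cI((E_m Y + E_n Y)/2)\|_{\ell^\infty} \leq 2\sqrt 2,\qquad
\|\cI((E_m\sh Y + E_n\sh Y)/2) - \cI((E_m Y + E_n Y)/2)\|_{\ell^\infty} \leq 8\|\sh Y - Y\|_{\ell^\infty}.
\end{equation}
The bound \eqref{eq:trilinear-0-discr} follows immediately by combining the $\ell^\infty_n\ell^1_m \times \ell^1_n\ell^\infty_m \hookrightarrow \ell^1_{m,n}$ Hölder-type inequality with the bound on $\cI$.

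For \eqref{eq:trilinear-1-discr}, I would use the telescoping decomposition
\begin{equation}
\Psi(\sh Y_+, \sh Y_-, \sh F) - \Psi(Y_+, Y_-, F) = {-}(\sh G_+ - G_+)\cdot \sh G_-\, \cI(\sh Z) - G_+\cdot (\sh G_- - G_-)\, \cI(\sh Z) - G_+\cdot G_-\,(\cI(\sh Z) - \cI(Z)),
\end{equation}
where $Z := (E_m Y + E_n Y)/2$ and $\sh Z := (E_m \sh Y + E_n \sh Y)/2$. The first two terms are estimated directly by the Hölder-type inequality and the uniform bound on $\cI(\sh Z)$, producing the first two lines of the right-hand side of \eqref{eq:trilinear-1-discr}. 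For the third term, Lemma~\ref{lem:inversion-lip} and \eqref{eq:discr-sol} give
\begin{equation}
\|\sh Y - Y\|_{\ell^\infty} \leq |\sh Y_+(0) - Y_+(0)| + \|\delta\sh Y_+ - \delta Y_+\|_{\ell^1} + \|\delta\sh Y_- - \delta Y_-\|_{\ell^1} + \|\sh F - F\|_{\ell^1},
\end{equation}
which, combined with the Hölder-type bound on $G_+\cdot G_-$ (applied with $\sh G_\pm$, for instance), matches the last two lines of \eqref{eq:trilinear-1-discr}.

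The main potential obstacle is bookkeeping: ensuring that the hypotheses of Lemma~\ref{lem:inversion-lip} hold for both $Y$ and $\sh Y$ throughout the argument, and aligning the estimates so that the asymmetry between $\ell^\infty_n \ell^1_m$ and $\ell^1_n \ell^\infty_m$ is respected in every term (in particular, in the final term, one is free to put the $\sh{\phantom{a}}$ either on $G_+$ or $G_-$, and the choice must be consistent with the norms appearing on the right-hand side of \eqref{eq:trilinear-1-discr}). Once the smallness \eqref{eq:trilinear-discr-ass} is used to verify all these hypotheses, the estimate is a mechanical consequence of trilinearity and the Lipschitz bound on $\cI$.
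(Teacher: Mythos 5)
Your proposal is correct and follows essentially the same route as the paper's proof: the same discrete currents $G_\pm$ with the same $\ell^\infty_n\ell^1_m$ and $\ell^1_n\ell^\infty_m$ bounds, the same verification of the hypotheses of Lemma~\ref{lem:inversion-lip} from \eqref{eq:discr-sol} and \eqref{eq:trilinear-discr-ass}, and the same trilinear H\"older/telescoping estimate. The only (immaterial) difference is which factors carry the $\sharp$ in the telescoping, a bookkeeping point you already flag and which is handled identically in the paper.
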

\begin{proof}
For $Y_\pm$ and $F$ as above, we define $G_+ = G_+(Y_+, F)$, $G_- = G_-(Y_-, F)$
and $Z = Z(Y_+, Y_-, F)$ as follows. For all $(m, n) \in \llbracket 0, M-1\rrbracket
\times \llbracket 0, N-1\rrbracket$, we set
\begin{equation}
\begin{aligned}
G_+(m, n) &:= \delta_m Y(m, n) = \delta Y_+(m) + \sum_{k=0}^{n-1}F(m, k), \\
G_-(m, n) &:= \delta_n Y(m, n) = \delta Y_-(n) + \sum_{j=0}^{m-1}F(j, n), \\
Z(m, n) &:= \cI((E_m Y(m, n) + E_n Y(m, n))/2).
\end{aligned}
\end{equation}
The mappings $(Y_+, F) \mapsto G_+(Y_+, F)$ and $(Y_-, F) \mapsto G_-(Y_-, F)$
are linear and
\begin{align}
\label{eq:G+bd}
\max_{0 \leq n \leq N}\sum_{m=0}^{M-1}|G_+(m, n)| &\leq \sum_{m=0}^{M-1}|\delta Y_+(m)|
+ \sum_{m=0}^{M-1}\sum_{n=0}^{N-1}|F(m, n)|, \\
\label{eq:G-bd}
\sum_{n=0}^{N-1}\max_{0 \leq m \leq M}|G_-(m, n)| &\leq \sum_{m=0}^{N-1}|\delta Y_-(n)|
+ \sum_{m=0}^{M-1}\sum_{n=0}^{N-1}|F(m, n)|.
\end{align}
Observe that the formulas for $\delta_m Y$ and $\delta_n Y$ recalled above also
yield, using \eqref{eq:trilinear-discr-ass},
\begin{equation}
\max(\|\delta_m Y\|_{\ell^\infty}, \|\delta_n Y\|_{\ell^\infty}) \leq \frac 18 + \frac 18 = \frac 14.
\end{equation}
By the formula \eqref{eq:discr-sol}, for all $(m, n) \in \llbracket 0, M\rrbracket
\times \llbracket 0, N\rrbracket$, we have
\begin{equation}
|Y(m, n)| \geq |Y_+(0)| - \|\delta Y_+\|_{\ell^1} - \|\delta Y_-\|_{\ell^1} - \|F\|_{\ell^1}
\geq \frac 78 - 3\times \frac 18 = \frac 12,
\end{equation}
and similarly $|\sh Y(m, n)| \geq \frac 12$.
Hence, from Lemma~\ref{lem:inversion-lip} we deduce that $\|Z\|_{\ell^\infty} \leq 2\sqrt 2$ and
\begin{equation}
\begin{aligned}
&\|\sh Z - Z\|_{\ell^\infty} \leq 8\|\sh Y - Y\|_{\ell^\infty} \leq \\
&\quad \leq 8\big(|\sh Y_+(0) - Y_+(0)| + \|\delta \sh Y_+ - \delta Y_+\|_{\ell^1} + \|\delta\sh Y_- - \delta Y_-\|_{\ell^1} + \|\sh F - F\|_{\ell^1}\big),
\end{aligned}
\end{equation}
where the last inequality follows from \eqref{eq:discr-sol}.

For all $Z \in \ell_{m, n}^\infty$, $G_+ \in \ell_n^\infty \ell_m^1$ and $G_- \in \ell_n^1 \ell_m^\infty$, the domain in each case being $\llbracket 0, M-1\rrbracket
\times \llbracket 0, N-1\rrbracket$,
set $\wt \Psi(Z, G_+, G_-) := -(G_+\cdot G_-)Z$.
Since $\wt \Psi: \ell_{m,n}^\infty \times \ell_n^\infty\ell_m^1 \times \ell_n^1\ell_m^\infty
\to \ell_{m,n}^1$ is a continuous trilinear functional, we have
\begin{equation}
\|\wt\Psi(Z, G_+, G_-)\|_{\ell_{m,n}^1} \lesssim \|Z\|_{\ell_{m,n}^\infty}\|G_+\|_{\ell_n^\infty\ell_m^1}\|G_-\|_{\ell_n^1\ell_m^\infty}
\end{equation}
and
\begin{equation}
\begin{aligned}
\|\wt\Psi(\sh Z, \sh G_+, \sh G_-) - \wt\Psi(Z, G_+, G_-)\|_{\ell_{m,n}^1} &\lesssim \|Z\|_{\ell_{m,n}^\infty}\|G_+\|_{\ell_n^\infty \ell_m^1}\|\sh G_- - G_-\|_{\ell_n^1 \ell_m^\infty} \\
&+ \|Z\|_{\ell_{m,n}^\infty}\|\sh G_+ - G_+\|_{\ell_n^\infty \ell_m^1}\|\sh G_-\|_{\ell_n^1 \ell_m^\infty} \\
&+ \|\sh Z - Z\|_{\ell_{m,n}^\infty}\|\sh G_+\|_{\ell_n^\infty \ell_m^1}\|\sh G_-\|_{\ell_n^1 \ell_m^\infty},
\end{aligned}
\end{equation}
hence \eqref{eq:trilinear-1} follows from the fact that
\begin{equation}
\Psi(Y_+, Y_-, F) = \wt \Psi\big(Z(Y_+, Y_-, F), G_+(Y_+, F), G_-(Y_-, F)\big).
\end{equation}
\end{proof}

\begin{lemma}[Short time perturbation]
\label{lem:s-time-pertb}
There exist $\eta_0, C > 0$ such that the following holds.
Let $M, N \in \bN$, let $Y:\llbracket 0, M\rrbracket\times \llbracket 0, N\rrbracket \to \bS^d$ be a discrete wave map with boundary data $Y_+, Y_-$, and $\sh Y:\llbracket 0, M\rrbracket\times \llbracket 0, N\rrbracket \to \bS^d$ a discrete wave map with boundary data
$\sh Y_+, \sh Y_-$ and external forcing $F_\tx e$, such that
\begin{gather}
\max(\|\delta Y_+\|_{\ell^1}, \|\delta Y_-\|_{\ell^1}, \|\delta \sh Y_+\|_{\ell^1}, \|\delta \sh Y_-\|_{\ell^1}, \|F_\tx e\|_{\ell^1}) \leq \eta_0.
\end{gather}
Then
\begin{equation}
\label{eq:s-time-pertb}
\|\delta_m\delta_n(\sh Y- Y)\|_{\ell^1} \leq 2(|\sh Y_+(0) - Y_+(0)| + \|\sh Y_+ - Y_+\| + \|\sh Y_- - Y_-\| + \|F_\tx e\|).
\end{equation}
\end{lemma}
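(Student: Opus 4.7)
The approach is to rewrite the discrete wave maps equation as a fixed-point equation for the operator $\Psi$ of Lemma~\ref{lem:trilinear-discr} and then apply the Lipschitz bound \eqref{eq:trilinear-1-discr} to the difference. Using Lemma~\ref{lem:R-formula} together with the identity $\cI(\tfrac12(P+Q)) = 2\,\cI(P+Q)$, substituting $P = Y(m{+}1,n)$, $Q = Y(m,n{+}1)$, $S = Y(m,n)$ into \eqref{eq:Y-cond} and rearranging, the defining relation translates into
\begin{equation}
\delta_m\delta_n Y = -(\delta_m Y\cdot\delta_n Y)\,\cI\bigl((E_m Y + E_n Y)/2\bigr),
\end{equation}
and the same identity holds for $\sh Y$ with an extra $+F_\tx e$ on the right, thanks to \eqref{eq:Y-cond-F}. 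Combined with the discrete d'Alembert formula \eqref{eq:discr-sol}, this yields the pointwise fixed-point equations
\begin{equation}
F_0 = \Psi(Y_+, Y_-, F_0) \quad\text{and}\quad F = \Psi(\sh Y_+, \sh Y_-, F) + F_\tx e,
\end{equation}
where $F_0 := \delta_m\delta_n Y$ and $F := \delta_m\delta_n\sh Y$.

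Before invoking \eqref{eq:trilinear-1-discr} one must verify its smallness hypothesis \eqref{eq:trilinear-discr-ass}: the boundary contributions $\|\delta Y_\pm\|_{\ell^1}, \|\delta\sh Y_\pm\|_{\ell^1}$ and $\|F_\tx e\|_{\ell^1}$ are all $\leq \eta_0 \leq 1/8$ by hypothesis, and $|Y_\pm(0)| = |\sh Y_\pm(0)| = 1$ by the sphere constraint, so only the bounds $\|F_0\|_{\ell^1}, \|F\|_{\ell^1} \leq 1/8$ remain to be checked. Applied to $F_0 = \Psi(Y_+, Y_-, F_0)$ restricted to subrectangles $\llbracket 0, M'\rrbracket \times \llbracket 0, N'\rrbracket$, the bound \eqref{eq:trilinear-0-discr} yields the self-improving inequality
\begin{equation}
\|F_0\|_{\ell^1} \leq C(\eta_0 + \|F_0\|_{\ell^1})^2,
\end{equation}
from which a minimality argument on the size of the rectangle, in the spirit of Step~2 of Lemma~\ref{lem:small-data}, produces $\|F_0\|_{\ell^1} \lesssim \eta_0^2$ once $\eta_0$ is small; the parallel argument with an extra $\|F_\tx e\|_{\ell^1}$ on the right gives $\|F\|_{\ell^1} \lesssim \eta_0$. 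This bootstrap is the main technical step, since the quadratic inequality a priori admits two branches of solutions and one must rule out the large one.

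With \eqref{eq:trilinear-discr-ass} secured, set $H := F - F_0 = \delta_m\delta_n(\sh Y - Y)$ and subtract the two fixed-point identities:
\begin{equation}
H = \Psi(\sh Y_+, \sh Y_-, F) - \Psi(Y_+, Y_-, F_0) + F_\tx e.
\end{equation}
Applying the Lipschitz estimate \eqref{eq:trilinear-1-discr} and using that every prefactor appearing on its right-hand side is $\lesssim \eta_0$, we obtain
\begin{equation}
\|H\|_{\ell^1} \leq C\eta_0\bigl(|\sh Y_+(0) - Y_+(0)| + \|\delta\sh Y_+ - \delta Y_+\|_{\ell^1} + \|\delta\sh Y_- - \delta Y_-\|_{\ell^1} + \|H\|_{\ell^1}\bigr) + \|F_\tx e\|_{\ell^1}.
\end{equation}
Shrinking $\eta_0$ so that $C\eta_0 \leq 1/2$ absorbs the $C\eta_0\|H\|_{\ell^1}$ term into the left-hand side, and a further mild shrinking of $\eta_0$ produces \eqref{eq:s-time-pertb} with the stated constant $2$.
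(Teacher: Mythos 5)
Your proposal is correct and follows essentially the same route as the paper: both reformulate the discrete equation as a fixed-point identity for $\Psi$ via Lemma~\ref{lem:R-formula} (and the relation $\cI((P+Q)/2)=2\,\cI(P+Q)$), and both conclude by applying the Lipschitz estimate \eqref{eq:trilinear-1-discr} to the difference of the two identities and absorbing the $C\eta_0\|H\|_{\ell^1}$ term. The only real difference is presentational: the paper invokes the contraction mapping principle on the ball $B(0,2\eta_0)$ and identifies $\delta_m\delta_n Y$ with the unique fixed point there, whereas you make explicit the a priori bound $\|\delta_m\delta_n Y\|_{\ell^1}\lesssim\eta_0^2$ needed to justify that identification, which is a legitimate (and arguably more careful) way of handling a step the paper leaves implicit.
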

\begin{proof}
By Lemma~\ref{lem:trilinear-discr}, if $\eta_0$ is small enough, then
the mapping $F \mapsto \Psi(Y_+, Y_-, F)$ is a strict contraction on the ball of center
$0$ and radius $2\eta_0$ in the space $\ell^1(\llbracket 0, M-1\rrbracket
\times \llbracket 0, N-1\rrbracket)$,
thus it has a unique fixed point $F$.
Then $\delta_m \delta_n Y = F$.

Similarly, the mapping $\sh F \mapsto \Psi(\sh Y_+, \sh Y_-, \sh F) + F_\tx e$
has a unique fixed point $\sh F$ in the same ball,
and we see that $\delta_m\delta_n \sh Y = \sh F$.

By Lemma~\ref{lem:trilinear-discr},
\begin{equation}
\begin{aligned}
\|\sh F - F\|_{\ell^1} &\leq \|\Psi(\sh Y_+, \sh Y_-, \sh F) - \Psi(Y_+, Y_-, F)\|_{\ell^1} + \|F_\tx e\|_{\ell^1} \\
&\leq \|\sh Y_+ - Y_+\|_{\ell^1} + \|\sh Y_- - Y_-\|_{\ell^1} + \frac 12 \|\sh F - F\|_{\ell^1} + \|F_\tx e\|_{\ell^1}
\end{aligned}
\end{equation}
if $\eta_0$ is taken small enough.
\end{proof}
\begin{lemma}[Long time perturbation]
\label{lem:l-time-pertb}
Let $\eta_0 > 0$ be given by Lemma~\ref{lem:s-time-pertb}.
For any $A, B > 0$ there exist $C$ and $\epsilon_0$ such that the following holds.
Let $M, N \in \bN$, $0 \leq \epsilon \leq \epsilon_0$.
Let $Y:\llbracket 0, M\rrbracket\times \llbracket 0, N\rrbracket \to \bS^d$ be a discrete wave map with boundary data $Y_+, Y_-$, and $\sh Y:\llbracket 0, M\rrbracket\times \llbracket 0, N\rrbracket \to \bS^d$ a discrete wave map with boundary data
$\sh Y_+, \sh Y_-$ and external forcing $F_\tx e$, such that
\begin{gather}
\label{eq:l-time-pertb-1}
\max(\|\delta Y_+\|_{\ell^\infty}, \|\delta Y_-\|_{\ell^\infty}, \|\delta\sh Y_+\|_{\ell^\infty}, \|\delta\sh Y_-\|_{\ell^\infty}) \leq \frac{\eta_0}{8}, \\
\label{eq:l-time-pertb-2}
\max(\|\delta Y_+\|_{\ell^1}, \|\delta \sh Y_+\|_{\ell^1}) \leq A, \\
\label{eq:l-time-pertb-3}
\max(\|\delta Y_-\|_{\ell^1}, \|\delta \sh Y_-\|_{\ell^1}) \leq B, \\
\label{eq:l-time-pertb-4}
\|\delta (\sh Y_+ - Y_+)\|_{\ell^1} + \|\delta(\sh Y_- - Y_-)\|_{\ell^1} + \|F_\tx e\|_{\ell^1} \leq \epsilon.
\end{gather}
Then
\begin{equation}
\label{eq:l-time-pertb}
\begin{aligned}
&\max\big(\|\delta_m\delta_n(\sh Y- Y)\|_{\ell_{m, n}^1},
\|\delta_n (\sh Y - Y)\|_{\ell_m^\infty\ell_n^1}, \\
&\qquad\qquad\|\delta_m (\sh Y - Y)\|_{\ell_n^\infty\ell_m^1},
\|\sh Y - Y\|_{\ell_{m, n}^\infty}
\big) \leq C\epsilon.
\end{aligned}
\end{equation}
\end{lemma}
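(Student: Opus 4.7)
The strategy is to iterate the short-time perturbation Lemma~\ref{lem:s-time-pertb} over a partition of $\llbracket 0, M\rrbracket \times \llbracket 0, N\rrbracket$ into finitely many sub-rectangles on each of which the boundary data of $Y$ and $\sh Y$ have small $\ell^1$-variation. Since the number of sub-rectangles will be controlled a priori in terms of $A$, $B$ and $\eta_0$, finite iteration will yield \eqref{eq:l-time-pertb}. This parallels the proof of Theorem~\ref{thm:keel-tao} in the continuous setting.

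The first step is to establish a discrete analog of the pointwise conservation laws \eqref{eq:pointwise-1}--\eqref{eq:pointwise-2}: for any exact discrete wave map $Y$, one has $|\delta_m Y(m, n)| = |\delta Y_+(m)|$ and $|\delta_n Y(m, n)| = |\delta Y_-(n)|$ for every $(m, n)$. This is the key new ingredient. The proof uses only \eqref{eq:R-exchange}: the reflection $\cR_{Y(m+1, n) + Y(m, n+1)}$ exchanges $Y(m+1, n)$ with $Y(m, n+1)$, so by the isometry property $|\delta_m Y(m, n+1)| = |\cR Y(m, n) - \cR Y(m, n+1)| = |\delta_m Y(m, n)|$, and induction concludes. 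For $\sh Y$ the same computation combined with the triangle inequality yields $|\delta_m \sh Y(m, n)| \leq |\delta \sh Y_+(m)| + \sum_{k=0}^{n-1} |F_\tx e(m, k)|$ and symmetrically for $|\delta_n \sh Y|$.

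Using \eqref{eq:l-time-pertb-1}--\eqref{eq:l-time-pertb-3}, I then partition $\llbracket 0, M\rrbracket$ greedily into $J \lesssim A/\eta_0$ intervals so that on each $\llbracket m_j, m_{j+1}\rrbracket$ one has $\sum(|\delta Y_+| + |\delta \sh Y_+|) \leq \eta_0/4$, and symmetrically $\llbracket 0, N\rrbracket$ into $K \lesssim B/\eta_0$ intervals. Taking $\epsilon_0$ small, the discrete conservation laws above show that on each sub-rectangle $R_{jk} := \llbracket m_j, m_{j+1}\rrbracket \times \llbracket n_k, n_{k+1}\rrbracket$, the edge data of $Y$ and $\sh Y$ satisfy the smallness hypothesis of Lemma~\ref{lem:s-time-pertb}. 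Setting $D_{jk} := \|\delta_m\delta_n(\sh Y - Y)\|_{\ell^1(R_{jk})}$, that lemma bounds $D_{jk}$ by a constant times the sum of $\|F_\tx e\|_{\ell^1(R_{jk})}$, the $\ell^1$-variation of $\sh Y - Y$ on the two edges of $R_{jk}$ meeting at $(m_j, n_k)$, and the corner mismatch $|\sh Y(m_j, n_k) - Y(m_j, n_k)|$. Each of these quantities can in turn be written, by elementary telescoping, as $O(\epsilon)$ plus a linear combination of $D_{j'k'}$ with $(j', k') \prec (j, k)$. An induction on $j + k$ then produces $D_{jk} \leq C_{j+k}\epsilon$, and summing over all $(j, k)$ gives the desired $\ell^1$-bound on $\delta_m\delta_n(\sh Y - Y)$.

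The remaining three estimates in \eqref{eq:l-time-pertb} follow by purely combinatorial telescoping: writing $W := \sh Y - Y$, the identity $\delta_n W(m, n) = \delta(\sh Y_- - Y_-)(n) + \sum_{j < m} \delta_m\delta_n W(j, n)$ yields the $\ell_m^\infty \ell_n^1$-bound from the $\ell^1$-bound on $\delta_m\delta_n W$, the $\ell_n^\infty \ell_m^1$-bound is symmetric, and $\|W\|_{\ell^\infty}$ follows from one further telescoping using \eqref{eq:l-time-pertb-4}. The main obstacle is the careful book-keeping in the inductive bound on $D_{jk}$, where one must check that the linear recursion in $(j, k)$ has coefficients bounded uniformly in $A, B, \eta_0$; because $J$ and $K$ are themselves finite and depend only on these parameters, the propagation of the constant is unproblematic.
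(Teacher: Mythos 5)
Your proof is correct, and its core strategy coincides with the paper's: control the boundary data on sub-rectangles via a discrete conservation law, then iterate the short-time perturbation Lemma~\ref{lem:s-time-pertb} a number of times bounded in terms of $A$, $B$ and $\eta_0$. There are two organizational differences worth noting. First, you prove the exact conservation law $|\delta_m Y(m,n)|=|\delta Y_+(m)|$ \emph{and} its approximate analogue for the forced solution, $|\delta_m \sh Y(m,n)|\le|\delta\sh Y_+(m)|+\sum_{k<n}|F_{\mathrm e}(m,k)|$, and use the latter to verify the smallness hypotheses for $\sh Y$ on each cell directly. The paper only uses the exact law for $Y$ and recovers the smallness of $\|\delta\sh Y(\cdot,n_0)\|_{\ell^1}$ from the perturbation bound already established on the preceding sub-rectangle (taking $\epsilon_0\le(2\wt C)^{-1}\eta_0$); your version is slightly cleaner in that it decouples the verification of the short-time hypotheses from the inductive estimate. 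Second, you decompose $\llbracket 0,M\rrbracket\times\llbracket 0,N\rrbracket$ into a $J\times K$ grid at once and run a single linear recursion on $D_{jk}$ ordered by $j+k$, whereas the paper runs two nested one-dimensional inductions (first on $\lfloor 4B\eta_0^{-1}\rfloor$ assuming $A\le\frac12\eta_0$, then on $\lfloor 8A\eta_0^{-1}\rfloor$), splitting off one strip at a time. The two bookkeeping schemes are equivalent; in both, the constant propagates through at most $O(JK)$ applications of Lemma~\ref{lem:s-time-pertb} and so depends only on $A$, $B$, $\eta_0$, as required. Two small points to make airtight: (i) in the greedy partition, an individual interval may have tiny variation, so the bound $J\lesssim A/\eta_0$ should be justified by pairing consecutive intervals (the sum over any two consecutive intervals exceeds a fixed fraction of $\eta_0$, using \eqref{eq:l-time-pertb-1}); (ii) the corner terms $|\sh Y(m_j,n_k)-Y(m_j,n_k)|$ require $\sh Y_+(0)=Y_+(0)$ (or a hypothesis controlling $|\sh Y_+(0)-Y_+(0)|$), a point the paper's statement leaves implicit as well.
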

\begin{proof}
It suffices to estimate $\|\delta_m\delta_n(\sh Y - Y)\|_{\ell_{m, n}^1}$,
the bounds on the remaining quantities being obtained by summation
with respect to one or both indices, using the formula \eqref{eq:discr-sol}.

\noindent
\textbf{Step 1.} We first prove the lemma under the additional assumption that $A \leq \frac 12\eta_0$. We proceed by induction
with respect to the integer part of $4B\eta_0^{-1}$.
If $B \leq \eta_0$, then the conclusion follows from Lemma~\ref{lem:s-time-pertb},
so it remains to verify the induction step.

Assume $B > \eta_0$.
Let $\wt B := B - \frac 14\eta_0 > \frac 34 \eta_0$.
By the induction hypothesis,
there exists $\wt C$ such that if \eqref{eq:l-time-pertb-3}
holds with $B$ replaced by $\wt B$,
then \eqref{eq:l-time-pertb} holds with $\wt C$ instead of $C$.
We can thus assume that $\|\delta Y_-\|_{\ell^1} > \wt B$ or $\|\delta \sh Y_-\|_{\ell^1} > \wt B$.

Let for example $\|\delta Y_-\|_{\ell^1} > \wt B > \frac 34 \eta_0$
and let $n_0$ be the smallest index such that
\begin{equation}
\label{eq:n_0-cond}
\sum_{n=n_0}^{N-1}|Y_-(n+1) - Y_-(n)| \leq \frac 34 \eta_0.
\end{equation}
We then have $n_0 > 0$ and, since \eqref{eq:l-time-pertb-1} yields $|Y_-(n_0) - Y_-(n_0-1)| \leq \frac 14\eta_0$, we see that
\begin{equation}
\label{eq:n_0-bd}
\sum_{n=n_0}^{N-1}|Y_-(n+1) - Y_-(n)| \in \big(\frac 12 \eta_0, \frac 34\eta_0\big].
\end{equation}
Otherwise, \eqref{eq:n_0-cond} would hold with $n_0$ replaced by $n_0 - 1$,
contradicting the definition of $n_0$.

If we take $\epsilon_0 \leq \frac 14 \eta_0$, then \eqref{eq:n_0-bd} and \eqref{eq:l-time-pertb-4} imply
\begin{equation}
\label{eq:n_0-bd-sh}
\sum_{n=n_0}^{N-1}|\sh Y_-(n+1) - \sh Y_-(n)| \in \big(\frac 14 \eta_0, \eta_0\big],
\end{equation}
thus
\begin{equation}
\max\Big(\sum_{n=0}^{n_0-1}|Y_-(n+1) - Y_-(n)|, \sum_{n=0}^{n_0-1}|\sh Y_-(n+1) - \sh Y_-(n)|\Big) < \wt B.
\end{equation}
By the induction hypthesis, we obtain
\begin{equation}
\label{eq:l-time-pertb-n_0}
\max(\|\delta_m\delta_n(\sh Y\vert_{n \leq n_0}- Y\vert_{n \leq n_0})\|_{\ell^1},
\|\delta (\sh Y(\cdot, n_0) - Y(\cdot, n_0))\|_{\ell^1}) \leq \wt C\epsilon.
\end{equation}
Since $Y$ is a discrete wave map, we also have
\begin{equation}
\|\delta Y(\cdot, n_0)\|_{\ell^1} = \|\delta Y_+\|_{\ell^1} \leq A \leq \frac 12 \eta_0.
\end{equation}
If we take $\epsilon_0 \leq (2\wt C)^{-1}\eta_0$, then \eqref{eq:l-time-pertb-n_0} gives
\begin{equation}
\|\delta \sh Y(\cdot, n_0)\|_{\ell^1} \leq \eta_0.
\end{equation}
Applying Lemma~\ref{lem:s-time-pertb} in the rectangle $\llbracket 0, M\rrbracket \times
\llbracket n_0, N\rrbracket$ finishes the induction step.

\noindent
\textbf{Step 2.} We prove the general case,
proceeding by induction with respect to the integer part of $8A\eta_0^{-1}$.
If $A \leq \frac 12\eta_0$, then the conclusion follows from Step 1,
so it remains to verify the induction step.

Assume $A > \frac 12\eta_0$.
Let $\wt A := A - \frac 18\eta_0 > \frac 38 \eta_0$. By the induction hypothesis,
there exists $\wt C$ such that if \eqref{eq:l-time-pertb-2} holds with $A$ replaced by $\wt A$,
then \eqref{eq:l-time-pertb} holds with $\wt C$ instead of $C$.
We can thus assume that $\|\delta Y_+\|_{\ell^1} > \wt A$ or $\|\delta \sh Y_+\|_{\ell^1} > \wt A$.

Let for example $\|\delta Y_+\|_{\ell^1} > \wt A > \frac 38 \eta_0$
and let $m_0$ be the smallest index such that
\begin{equation}
\label{eq:m_0-cond}
\sum_{m=m_0}^{M-1}|Y_+(m+1) - Y_+(m)| \leq \frac 38 \eta_0.
\end{equation}
We then have $m_0 > 0$ and, since \eqref{eq:l-time-pertb-1} yields $|Y_+(m_0) - Y_+(m_0-1)| \leq \frac 18\eta_0$, we see that
\begin{equation}
\label{eq:m_0-bd}
\sum_{m=m_0}^{M-1}|Y_+(m+1) - Y_+(m)| \in \big(\frac 14 \eta_0, \frac 38\eta_0\big].
\end{equation}
Otherwise, \eqref{eq:m_0-cond} would hold with $m_0$ replaced by $m_0 - 1$,
contradicting the definition of $m_0$.

If we take $\epsilon_0 \leq \frac 18 \eta_0$, then \eqref{eq:m_0-bd} and \eqref{eq:l-time-pertb-4} imply
\begin{equation}
\label{eq:m_0-hash-bd}
\sum_{m=m_0}^{M-1}|\sh Y_+(m+1) - \sh Y_+(m)| \in \big(\frac 18 \eta_0, \frac 12\eta_0\big],
\end{equation}
thus
\begin{equation}
\max\Big(\sum_{m=0}^{m_0-1}|Y_+(m+1) - Y_+(m)|, \sum_{m=0}^{m_0-1}|\sh Y_+(m+1) - \sh Y_+(m)|\Big) < \wt A.
\end{equation}
By the induction hypthesis, we obtain
\begin{equation}
\label{eq:l-time-pertb-m_0}
\|\delta_m\delta_n(\sh Y\vert_{m \leq m_0}- Y\vert_{m \leq m_0})\|_{\ell^1} \leq \wt C\epsilon.
\end{equation}
By Step 1, we also have $\|\delta_m\delta_n(\sh Y\vert_{m \geq m_0}- Y\vert_{m \geq m_0})\|_{\ell^1} \lesssim \epsilon$, and we obtain the induction step by summing the two bounds.
\end{proof}
\begin{remark}
We stress that, in Lemma~\ref{lem:s-time-pertb}, $\eta_0$ and $C$ are independent of $M$ and $N$.
Similarly, in Lemma~\ref{lem:l-time-pertb},
$C$ and $\epsilon_0$ only depend on $A$ and $B$, but not on $M$ and $N$.
\end{remark}

In the next lemma, we compare a solution of the wave maps equation
with small boundary data with its discrete approximation.
\begin{lemma}
\label{lem:cell-sol}
There exist $\eta_0, C > 0$ such that the following holds.
Let $w > 0$ and $\phi: [0, w]\times [0, w] \to \bS^d$ be a wave map with boundary data $\phi_+ = \phi(\cdot, 0): [0, w] \to \bS^d$
and $\phi_- = \phi(0, \cdot): [0, w]\to \bS^d$ such that
\begin{equation}
\max(\|\phi_+'\|_{L^1}, \|\phi_-'\|_{L^1}) \leq \eta_0.
\end{equation}
Then
\begin{equation}
\begin{aligned}
\big|\phi(w, w) - \cR_{\phi(w, 0) + \phi(0, w)}\phi(0, 0)\big|&\leq C\Big(\|\phi_+'\|_{L^1}\Big\|\phi_-' - \frac{1}{w}\big(\phi_-(w) - \phi_-(0)\big)\Big\|_{L^1} \\
&\qquad+ \|\phi_-'\|_{L^1}\Big\|\phi_+' - \frac{1}{w}\big(\phi_+(w) - \phi_+(0)\big)\Big\|_{L^1} \\
&\qquad + \big(\|\phi_+'\|_{L^1} + \|\phi_-'\|_{L^1}\big)\|\phi_+'\|_{L^1}\|\phi_-'\|_{L^1}\Big).
\end{aligned}
\end{equation}
\end{lemma}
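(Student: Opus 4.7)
The plan is to make the nonlinear parts of both $\cR_{\phi(w,0)+\phi(0,w)}\phi(0,0)$ and $\phi(w,w)$ explicit (via Lemma~\ref{lem:R-formula} and Definition~\ref{def:strong} respectively), observe that their leading-order terms cancel, and control the remainder by the cubic term $(\|\phi_+'\|_{L^1}+\|\phi_-'\|_{L^1})\|\phi_+'\|_{L^1}\|\phi_-'\|_{L^1}$, which is the third summand on the right-hand side. Since the first two summands are nonnegative, establishing this (stronger) bound is enough.

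Set $a := \phi_+(w) - \phi_+(0) = \int_0^w \phi_+'(u)\,du$ and $b := \phi_-(w) - \phi_-(0)$. For $\eta_0$ small enough, $|\phi(w,0)+\phi(0,w)| \geq 2 - (\|\phi_+'\|_{L^1}+\|\phi_-'\|_{L^1}) > 0$, so Lemma~\ref{lem:R-formula} applies; let $V := \cI(\phi(w,0)+\phi(0,w))$. Using $\phi_+(0) = \phi_-(0)$ and the identity $(\phi(0,0)-\phi(w,0))\cdot(\phi(0,0)-\phi(0,w)) = a\cdot b$, Lemma~\ref{lem:R-formula} gives
\[
\cR_{\phi(w,0)+\phi(0,w)}\phi(0,0) = \phi_+(w) + \phi_-(w) - \phi_+(0) - 2(a\cdot b)V,
\]
and subtracting from the integral representation of $\phi(w,w)$ in Definition~\ref{def:strong} yields
\[
\phi(w,w) - \cR_{\phi(w,0)+\phi(0,w)}\phi(0,0) = -\int_0^w\int_0^w (\partial_u\phi\cdot\partial_v\phi)\phi\,du\,dv + 2(a\cdot b)V.
\]
At leading order in $\eta_0$, both terms are approximately $(a\cdot b)\phi(0,0)$, since $\partial_u\phi\approx\phi_+'$, $\partial_v\phi\approx\phi_-'$, $\phi\approx\phi(0,0)$, and $2V\approx\phi(0,0)$; it is this cancellation that will be exploited.

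To turn this into estimates, I will rewrite the difference as $J_1+J_2+J_3$ with
\begin{align*}
J_1 &:= (2V-\phi(0,0))(a\cdot b),\\
J_2 &:= -\int_0^w\int_0^w \bigl[(\partial_u\phi\cdot\partial_v\phi) - (\phi_+'(u)\cdot\phi_-'(v))\bigr]\phi\,du\,dv,\\
J_3 &:= \int_0^w\int_0^w (\phi_+'(u)\cdot\phi_-'(v))(\phi(0,0)-\phi(u,v))\,du\,dv,
\end{align*}
and bound each by the cubic term. For $J_1$, a direct computation using $P,Q\in\bS^d$ shows $2(P+Q)-\phi(0,0)|P+Q|^2 = 2(a+b) + |a-b|^2\phi(0,0)$, and with $|P+Q|^2\geq 2$ this yields $|2V-\phi(0,0)|\lesssim |a|+|b|\leq \|\phi_+'\|_{L^1}+\|\phi_-'\|_{L^1}$. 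For $J_2$, expand $(\partial_u\phi\cdot\partial_v\phi)-(\phi_+'\cdot\phi_-') = (\partial_u\phi-\phi_+')\cdot\partial_v\phi + \phi_+'\cdot(\partial_v\phi-\phi_-')$, use the identity $\partial_u\phi(u,v)-\phi_+'(u) = -\int_0^v(\partial_u\phi\cdot\partial_v\phi)\phi(u,z)\,dz$ (and the symmetric one), and pair them with the same mixed-Lebesgue norms as in the proof of Lemma~\ref{lem:trilinear} using the control $\|\partial_u\phi\cdot\partial_v\phi\|_{L^1}\lesssim\|\phi_+'\|_{L^1}\|\phi_-'\|_{L^1}$ supplied by that lemma. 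For $J_3$, the integral representation of Definition~\ref{def:strong} yields $\|\phi-\phi(0,0)\|_{L^\infty([0,w]^2)}\lesssim \|\phi_+'\|_{L^1}+\|\phi_-'\|_{L^1}$, while $\int_0^w\int_0^w|\phi_+'(u)\cdot\phi_-'(v)|\,du\,dv\leq \|\phi_+'\|_{L^1}\|\phi_-'\|_{L^1}$; the product is again of the cubic form.

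The main technical subtlety is the algebraic identity extracting the factor $|a|+|b|$ (rather than a mere constant) in $J_1$, without which the bound would not be cubic; the rest amounts to bookkeeping with the $L_v^\infty L_u^1$ and $L_v^1 L_u^\infty$ norms already introduced in the proof of Lemma~\ref{lem:trilinear}.
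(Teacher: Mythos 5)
Your proof is correct, and it takes a genuinely different route from the paper's. The paper rescales to $w=1$, replaces the boundary data by their affine interpolants $\wt\phi_\pm$, and compares $\Psi(\phi_+,\phi_-,0)$ with $\Psi(\wt\phi_+,\wt\phi_-,0)$ via Lemma~\ref{lem:trilinear}; it is precisely this interpolation step that generates the first two summands $\|\phi_+'\|_{L^1}\|\phi_-'-\frac1w(\phi_-(w)-\phi_-(0))\|_{L^1}+\dots$ on the right-hand side, after which the integral of $\Psi(\wt\phi_+,\wt\phi_-,0)$ is computed exactly and matched against the reflection formula up to cubic errors. You instead expand directly around the frozen data $(\phi_+',\phi_-',\phi(0,0))$: the decomposition into $J_1+J_2+J_3$ is algebraically exact (I checked that the sum reproduces $-\iint(\partial_u\phi\cdot\partial_v\phi)\phi+2(a\cdot b)V$, which by Lemma~\ref{lem:R-formula} and Definition~\ref{def:strong} is the quantity to bound), the identity $2(P+Q)-|P+Q|^2\phi(0,0)=2(a+b)+|a-b|^2\phi(0,0)$ together with $|P+Q|^2=4-|a-b|^2\ge 2$ does give $|2V-\phi(0,0)|\lesssim|a|+|b|$, and the $J_2$, $J_3$ estimates follow from $\|f\|_{L^1}\lesssim\|\phi_+'\|_{L^1}\|\phi_-'\|_{L^1}$ and the mixed-norm bookkeeping of Lemma~\ref{lem:trilinear}, all of which the paper also uses. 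The payoff is that your conclusion is strictly stronger: the right-hand side consists of the cubic term alone, and since the omitted summands are nonnegative the stated lemma follows. This would in fact streamline the application in Theorem~\ref{thm:scheme-conv}, where the first two summands are currently disposed of via the operators $T_{L+N}$ of Lemma~\ref{lem:L1-lemma}; with your bound that step becomes unnecessary, as $\sum_{m,n}\epsilon_m\delta_n(\epsilon_m+\delta_n)\le(\max_m\epsilon_m+\max_n\delta_n)\|\phi_+'\|_{L^1}\|\phi_-'\|_{L^1}\to 0$ directly. The only point worth making explicit in a final write-up is the bound $\|f\|_{L^1}\lesssim\|\phi_+'\|_{L^1}\|\phi_-'\|_{L^1}$ for the fixed point $f$, which follows from the quadratic estimate \eqref{eq:trilinear-0-discr}-type bound implicit in the proof of Lemma~\ref{lem:trilinear} and a short bootstrap, exactly as the paper itself asserts without proof.
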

\begin{proof}
By considering $(u, v) \mapsto \phi(u/w, v/w)$ instead of $\phi$,
without loss of generality we can assume $w = 1$.

Let $f := -(\partial_u \phi\cdot \partial_v \phi)\phi$, so that
\begin{equation}
f = \Psi(\phi_+, \phi_-, f), \quad \|f\|_{L^1} \lesssim \|\phi_+'\|_{L^1}\|\phi_-'\|_{L^1}.
\end{equation}
We have
\begin{equation}
\label{eq:phi11}
\begin{aligned}
\phi(1, 1) = \phi_+(1) + \phi_-(1) - \phi_+(0) + \int_0^1\int_0^1 f(u, v)\ud v\ud u.
\end{aligned}
\end{equation}
Lemma~\ref{lem:trilinear} yields
\begin{equation}
\label{eq:f-Psiphi}
\begin{aligned}
\|f - \Psi(\phi_+, \phi_-, 0)\|_{L^1} &= \|\Psi(\phi_+, \phi_-, f) - \Psi(\phi_+, \phi_-, 0)\|_{L^1} \\
&\lesssim (\|\phi_+'\|_{L^1} + \|\phi_-'\|_{L^1} + \|f\|_{\ell^1})\|f\|_{L^1} \\
&\lesssim (\|\phi_+'\|_{L^1} + \|\phi_-'\|_{L^1})\|\phi_+'\|_{L^1}\|\phi_-'\|_{L^1}.
\end{aligned}
\end{equation}

Let $\wt\phi_+(u) := (1-u)\phi_+(0) + u\phi_+(1)$
be the affine function which agrees with $\phi_+$ at $u \in \{0, 1\}$,
and similarly $\wt \phi_-(v) := (1-v)\phi_-(0) + v\phi_-(1)$.
Again by Lemma~\ref{lem:trilinear}, we have
\begin{equation}
\begin{aligned}
&\|\Psi(\phi_+, \phi_-, 0) - \Psi(\wt \phi_+, \wt \phi_-, 0)\|_{L^1} \lesssim \\
&\quad \|\phi_+'\|_{L^1}\|\phi_-' - (\phi_-(1) - \phi_-(0))\|_{L^1} \\
&\quad + |\phi_-(1) - \phi_-(0)|\|\phi_+' - (\phi_+(1) - \phi_+(0))\|_{L^1} \\
&\quad + |\phi_+(1) - \phi_+(0)| \times |\phi_-(1) - \phi_-(0)|\times \\
&\qquad\times(\|\phi_+' - (\phi_+(1) - \phi_+(0))\|_{L^1}+\|\phi_-' - (\phi_-(1) - \phi_-(0))\|_{L^1}).
\end{aligned}
\end{equation}
If $\|\phi_+'\|_{L^1} \leq 1$ and $\|\phi_-'\|_{L^1} \leq 1$, then the term given by the 4th and 5th line above
can be absorbed into the 2nd and 3rd line, and we get
\begin{equation}
\label{eq:Psiphi-Psiwtphi}
\begin{aligned}
\|\Psi(\phi_+, \phi_-, 0) - \Psi(\wt \phi_+, \wt \phi_-, 0)\|_{L^1} &\lesssim
\|\phi_+'\|_{L^1}\|\phi_-' - (\phi_-(1) - \phi_-(0))\|_{L^1} \\
&+ \|\phi_-'\|_{L^1}\|\phi_+' - (\phi_+(1) - \phi_+(0))\|_{L^1}.
\end{aligned}
\end{equation}

The function $\Psi(\wt\phi_+, \wt\phi_-, 0)$ is given by \eqref{eq:Psi-at-0}.
Since $\wt \phi_+$ and $\wt \phi_-$ are affine functions, we obtain
\begin{equation}
\label{eq:Psi-phi-tilde}
\begin{aligned}
&\int_0^1 \int_0^1 \Psi(\wt \phi_+, \wt\phi_-, 0)\ud v\ud u = \\
&\qquad = -\big((\phi_+(1) - \phi_+(0))\cdot(\phi_-(1) - \phi_-(0)\big)\big((\phi_+(1) + \phi_-(1))/2\big).
\end{aligned}
\end{equation}
Note that $\phi_-(0) = \phi_+(0) \in \bS^d$ and $|(\phi_+(1) + \phi_-(1))/{2} - \phi_+(0)| \leq \frac 12(|\phi_+(1) - \phi_+(0)| + |\phi_-(1) - \phi_-(0)|)$, so elementary geometry gives
\begin{equation}
\bigg|\cI\bigg(\frac{\phi_+(1) + \phi_-(1)}{2}\bigg) - \frac{\phi_+(1) + \phi_-(1)}{2}\bigg| \lesssim |\phi_+(1) - \phi_+(0)| + |\phi_-(1) - \phi_-(0)|.
\end{equation}
Hence, \eqref{eq:Psi-phi-tilde} and Lemma \ref{lem:R-formula} yield
\begin{equation}
\begin{aligned}
&\cR_{\phi(1, 0) + \phi(0, 1)}\phi(0, 0) - \Big(\phi_+(1) + \phi_-(1) - \phi_+(0) + \int_0^1\int_0^1 \Psi(\wt \phi_+, \wt\phi_-, 0)\ud v\ud u\Big) \lesssim \\
&\qquad\lesssim \big(|\phi_+(1) - \phi_+(0)| + |\phi_-(1) - \phi_-(0)|\big)\times|\phi_+(1) - \phi_+(0)|\times |\phi_-(1) - \phi_-(0)| \\
&\qquad \lesssim\big(\|\phi_+'\|_{L^1} + \|\phi_-'\|_{L^1}\big)\|\phi_+'\|_{L^1}\|\phi_-'\|_{L^1},
\end{aligned}
\end{equation}
and \eqref{eq:phi11} implies
\begin{equation}
\begin{aligned}
|\phi(1, 1) - \cR_{\phi(1, 0) + \phi(0, 1)}\phi(0, 0)| &\lesssim \|f - \Psi(\wt \phi_+, \wt\phi_-, 0)\|_{L^1} \\
&+ \big(\|\phi_+'\|_{L^1} + \|\phi_-'\|_{L^1}\big)\|\phi_+'\|_{L^1}\|\phi_-'\|_{L^1}.
\end{aligned}
\end{equation}
From \eqref{eq:f-Psiphi} and \eqref{eq:Psiphi-Psiwtphi}, we obtain the required bound on the first term of the right hand side
above.
\end{proof}

\begin{proof}[Proof of Theorem~\ref{thm:scheme-conv}]
Let $L, N \in \bN$. For $0 \leq m, n \leq 2^{L+N}$,
let $Z_+(m) := \phi_+\big(\frac{m}{2^{N}}\big)$, $Z_-(n) := \phi_-\big(\frac{n}{2^{N}}\big)$, $Z(m, n) := \phi\big(\frac{m}{2^{N}}, \frac{n}{2^{N}}\big)$.

By Definition~\ref{def:sol-Fe-discr}, $Z: \llbracket 0, 2^{L+N}\rrbracket^2 \to \bS^d$
is a discrete wave map with external forcing $F_\tx e$ given by
\begin{equation}
F_\tx e(m, n) := Z(m+1, n+1) - \cR_{Z(m+1, n) + Z(m, n+1)}Z(m, n).
\end{equation}
Our aim is to prove \eqref{eq:Fe-conv-0} below.

Let $\eta_0$ be the number given by Lemma~\ref{lem:cell-sol}.
Since $\phi_+' \in L^1([0, L])$, there exists $N_0$ such that
\begin{equation}
\max_{m\in\llbracket 0, 2^{L+N}-1\rrbracket}\int_{\frac{m}{2^{N}}}^{\frac{m+1}{2^{N}}}|\phi_+'(u)|\ud u \leq \eta_0, \qquad\text{for all }N \geq N_0.
\end{equation}
By \eqref{eq:pointwise-1}, we thus obtain
\begin{equation}
\max_{n\in\llbracket 0, 2^{L+N}-1\rrbracket}\max_{m\in\llbracket 0, 2^{L+N}-1\rrbracket}\int_{\frac{m}{2^{N}}}^{\frac{m+1}{2^{N}}}\Big|\partial_u \phi\Big(u, \frac{n}{2^{N}}\Big)\Big|\ud u \leq\eta_0, \qquad\text{for all }N \geq N_0.
\end{equation}
Similarly,
\begin{equation}
\max_{m\in\llbracket 0, 2^{L+N}-1\rrbracket}\max_{n\in\llbracket 0, 2^{L+N}-1\rrbracket}\int_{\frac{n}{2^{N}}}^{\frac{n+1}{2^{N}}}\Big|\partial_v \phi\Big(\frac{m}{2^{N}}, v\Big)\Big|\ud v \leq\eta_0, \qquad\text{for all }N \geq N_0.
\end{equation}
By Lemma~\ref{lem:cell-sol},
for all $N \geq N_0$ and $(m, n) \in \llbracket 0, 2^{L+N}-1\rrbracket$ we have
\begin{equation}
\label{eq:Fe-est}
\begin{aligned}
|F_\tx e(m, n)| &\lesssim \int_{\frac{m}{2^{N}}}^{\frac{m+1}{2^{N}}}\Big|\partial_u \phi\Big(u, \frac{n}{2^N}\Big)\Big|\ud u\ \times \\
&\quad \times\int_{\frac{n}{2^{N}}}^{\frac{n+1}{2^{N}}}\Big|\partial_v \phi\Big(\frac{m}{2^{N}}, v\Big) - 2^N\Big(\phi\Big(\frac{m}{2^{N}}, \frac{n+1}{2^{N}}\Big) - \phi\Big(\frac{m}{2^{N}}, \frac{n}{2^{N}}\Big)\Big)\Big|\ud v \\
&+ \int_{\frac{n}{2^{N}}}^{\frac{n+1}{2^{N}}}\Big|\partial_v \phi\Big(\frac{m}{2^N}, v\Big)\Big|\ud v\ \times \\
&\quad \times\int_{\frac{m}{2^{N}}}^{\frac{m+1}{2^{N}}}\Big|\partial_u \phi\Big(u, \frac{n}{2^{N}}\Big) - 2^N\Big(\phi\Big(\frac{m+1}{2^{N}}, \frac{n}{2^{N}}\Big) - \phi\Big(\frac{m}{2^{N}}, \frac{n}{2^{N}}\Big)\Big)\Big|\ud u \\
&+ \bigg(\int_{\frac{m}{2^{N}}}^{\frac{m+1}{2^{N}}}\Big|\partial_u \phi\Big(u, \frac{n}{2^N}\Big)\Big|\ud u + \int_{\frac{n}{2^{N}}}^{\frac{n+1}{2^{N}}}\Big|\partial_v \phi\Big(\frac{m}{2^N}, v\Big)\Big|\ud v\bigg) \times \\
&\quad \times \int_{\frac{m}{2^{N}}}^{\frac{m+1}{2^{N}}}\Big|\partial_u \phi\Big(u, \frac{n}{2^N}\Big)\Big|\ud u \times \int_{\frac{n}{2^{N}}}^{\frac{n+1}{2^{N}}}\Big|\partial_v \phi\Big(\frac{m}{2^N}, v\Big)\Big|\ud v.
\end{aligned}
\end{equation}

By Lemma~\ref{lem:L1-lemma}, for all $u \in [0, 2^{L}]$ we have
\begin{equation}
\begin{aligned}
&\lim_{N\to \infty}\sum_{n=0}^{2^{L+N}-1}\int_\frac{n}{2^N}^\frac{n+1}{2^N}\Big|\partial_v \phi(u, v) - 2^N\Big(\phi\Big(u, \frac{n+1}{2^{N}}\Big) - \phi\Big(u, \frac{n}{2^{N}}\Big)\Big)\Big|\ud v \\
&\quad\qquad = \lim_{N\to \infty}\big\|T_{L+N}\big(\partial_v \phi(u, \cdot)\big)\big\|_{L^1} = 0,
\end{aligned}
\end{equation}
where $T_{L+N}$ is defined by \eqref{eq:TN-def}.

By Definition~\ref{def:strong}, the mapping $u \mapsto \partial_v\phi(u, \cdot)$ is continous
from $[0, 2^L]$ to $L^1([0, 2^L])$, hence uniformly continuous,
so $\|T_{L+N}\|_{\scrL(L^1)}\leq 2$ implies
\begin{equation}
\lim_{N\to\infty}\sup_{0 \leq u \leq 2^L}\sum_{n=0}^{2^{L+N}-1}\int_\frac{n}{2^N}^\frac{n+1}{2^N}\Big|\partial_v \phi(u, v) - 2^N\Big(\phi\Big(u, \frac{n+1}{2^{N}}\Big) - \phi\Big(u, \frac{n}{2^{N}}\Big)\Big)\Big|\ud v = 0.
\end{equation}
In particular
\begin{equation}
\begin{aligned}
\lim_{N\to\infty}\max_{m \in \llbracket 0, 2^{L+N}-1\rrbracket}&\sum_{n=0}^{2^{L+N}-1}\int_{\frac{n}{2^{N}}}^{\frac{n+1}{2^{N}}}\Big|\partial_v \phi\Big(\frac{m}{2^{N}}, v\Big) \\
 &- 2^N\Big(\phi\Big(\frac{m}{2^{N}}, \frac{n+1}{2^{N}}\Big) - \phi\Big(\frac{m}{2^{N}}, \frac{n}{2^{N}}\Big)\Big)\Big|\ud v = 0.
\end{aligned}
\end{equation}
Using \eqref{eq:pointwise-1}, for all $N$ we also have
\begin{equation}
\begin{aligned}
\sum_{m=0}^{2^{L+N}-1}\max_{n \in \llbracket 0, 2^{L+N}-1\rrbracket} \int_{\frac{m}{2^{N}}}^{\frac{m+1}{2^{N}}}\Big|\partial_u \phi\Big(u, \frac{n}{2^N}\Big)\Big|\ud u = \sum_{m=0}^{2^{L+N}-1} \int_{\frac{m}{2^{N}}}^{\frac{m+1}{2^{N}}}\big|\phi_+'(u)\big|\ud u = \|\phi_+'\|_{L^1}.
\end{aligned}
\end{equation}
By summing first with respect to $n$, and then with respect to $m$,
we thus obtain that the sum in $m$ and $n$ of the term given by the
1st and 2nd line in \eqref{eq:Fe-est} tends to $0$ as $N \to \infty$.
The term given by the 3rd and 4th line is treated similarly.

Consider now the 5th and 6th line. For all $N$ we have
\begin{equation}
\sum_{m, n}\bigg(\int_{\frac{m}{2^{N}}}^{\frac{m+1}{2^{N}}}\Big|\partial_u \phi\Big(u, \frac{n}{2^N}\Big)\Big|\ud u \times \int_{\frac{n}{2^{N}}}^{\frac{n+1}{2^{N}}}\Big|\partial_v \phi\Big(\frac{m}{2^N}, v\Big)\Big|\ud v\bigg) = \|\phi_+'\|_{L^1}\|\phi_-'\|_{L^1}.
\end{equation}
Using \eqref{eq:pointwise-1} and \eqref{eq:pointwise-2}, we also have
\begin{equation}
\lim_{N\to \infty}\max_{m, n}\int_{\frac{m}{2^{N}}}^{\frac{m+1}{2^{N}}}\Big|\partial_u \phi\Big(u, \frac{n}{2^N}\Big)\Big|\ud u = \lim_{N\to \infty}\max_m \int_{\frac{m}{2^{N}}}^{\frac{m+1}{2^{N}}}\big|\phi_+'(u)\big|\ud u = 0
\end{equation}
and
\begin{equation}
\lim_{N\to \infty}\max_{m, n}\int_{\frac{n}{2^{N}}}^{\frac{n+1}{2^{N}}}\Big|\partial_v \phi\Big(\frac{m}{2^N}, v\Big)\Big|\ud v = \lim_{N\to \infty}\max_n \int_{\frac{n}{2^{N}}}^{\frac{n+1}{2^{N}}}\big|\phi_-'(v )\big|\ud v = 0.
\end{equation}
We thus conclude that
\begin{equation}
\label{eq:Fe-conv-0}
\lim_{N\to\infty}\|F_\tx e\|_{\ell^1} = 0.
\end{equation}
By Lemma~\ref{lem:l-time-pertb}, $\lim_{N\to\infty}\|Z - Y\|_{\ell^\infty} = 0$,
and the uniform continuity of $\phi$ on bounded sets yields \eqref{eq:scheme-conv}.
\end{proof}

\section{Wave maps with random boundary data}
\label{sec:random}
\subsection{Notion of solution for random boundary data}
In Section~\ref{sec:cauchy}, we presented well-posedness results
in the case of absolutely continuous boundary data.
Our goal in the present section is to consider a continuous random field as boundary data
and study the corresponding random field of solutions of the characteristic Cauchy problem.
\begin{definition}
\label{def:solution}
Let $\phi_-, \phi_+ : \Omega \to X([0, \infty))$ be such that $\phi_-^{\omega}(0) = \phi_+^{\omega}(0)$
with probability one.
We say that a random field $\phi: \Omega' \to X([0, \infty)^2)$ is a solution
of \eqref{eq:wm-cauchy} if there exists an increasing sequence $N_k$ such that
\begin{equation}
\label{eq:conv-psiNk}
\Phi_{N_k}(\phi_+, \phi_-) \to \phi \qquad\text{in distribution on $X([0, \infty)^2)$.}
\end{equation}
\end{definition}

Note that solutions are defined here only as \emph{probability laws}
on $X([0, \infty)^2)$, which is highlighted above by the fact that
$\phi$ is defined on the sample space $\Omega'$, distinct from $\Omega$.
This is in contrast with the ``individual trajectory'' approach,
which would aim at defining the solution as a random field $\wt \phi:\Omega \to X([0, \infty)^2)$
such that, for almost all $\omega\in \Omega$, $\wt \phi^{(\omega)}$
be a solution of \eqref{eq:wm-cauchy} with the boundary data $(\phi_+^{(\omega)}, \phi_-^{(\omega)})$ in some \emph{deterministic} sense which would have to be specified.

Since the individual trajectory approach is more natural from the PDE viewpoint,
it is relevant to compare it with the seemingly more artificial Definition~\ref{def:solution}.
As a consequence of Theorem~\ref{thm:scheme-conv},
we find that for absolutely continuous data the result is the same.
Recall that $\Phi$ is the solution map obtained in Theorem~\ref{thm:keel-tao}.
\begin{corollary}
\label{cor:the-same}
\label{def:individual}
Let $\phi_-, \phi_+ : \Omega \to X([0, \infty))$ be such that $\phi_-^{\omega}(0) = \phi_+^{\omega}(0)$
with probability one and $\phi_+', \phi_-' \in L^1_\tx{loc}([0, \infty))$ with probability one.
Then $\omega \mapsto \wt \phi^{(\omega)} := \Phi(\phi_+^{(\omega)}, \phi_-^{(\omega)})$ is a solution
of \eqref{eq:wm-cauchy} in the sense of Definition~\ref{def:solution},
and any other solution $\phi: \Omega' \to X([0, \infty)^2)$
has the same distribution as $\wt \phi$.
\end{corollary}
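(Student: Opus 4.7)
The plan is to leverage Theorem~\ref{thm:scheme-conv} to reduce the statement to a routine fact about convergence in distribution of a sequence of random variables that converges almost surely.

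First, I would check that $\wt\phi^{(\omega)} := \Phi(\phi_+^{(\omega)}, \phi_-^{(\omega)})$ defines a bona fide random field with values in $X([0,\infty)^2)$. By the Remark following Definition~\ref{def:phiN-def}, each map $\Phi_N : X([0,\infty))^2 \to C([0,\infty)^2)$ is Borel. By Theorem~\ref{thm:scheme-conv}, on the Borel subset of $X([0,\infty))^2$ consisting of pairs $(\phi_+, \phi_-)$ with $\phi_\pm \in W^{1,1}_{\mathrm{loc}}$ and $\phi_+(0) = \phi_-(0)$, the sequence $\Phi_N$ converges pointwise to $\Phi$. Hence $\Phi$ is Borel on this subset as a pointwise limit of Borel maps, and by hypothesis $(\phi_+^{(\omega)}, \phi_-^{(\omega)})$ lies in this subset with probability one, so $\wt\phi$ is a well-defined random element of $X([0,\infty)^2)$.

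Next, for the existence part, apply Theorem~\ref{thm:scheme-conv} pointwise: on the full-measure event where $\phi_\pm^{(\omega)}$ is locally absolutely continuous and $\phi_+^{(\omega)}(0) = \phi_-^{(\omega)}(0)$, the sequence $\Phi_N(\phi_+^{(\omega)}, \phi_-^{(\omega)})$ converges in $C([0,\infty)^2)$ to $\wt\phi^{(\omega)}$. This gives almost sure convergence of $\Phi_N(\phi_+, \phi_-)$ to $\wt\phi$ in the Fréchet space $X([0,\infty)^2)$, which in particular implies convergence in distribution. Thus $\wt\phi$ satisfies Definition~\ref{def:solution} with $N_k = k$.

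For the uniqueness part, suppose $\phi : \Omega' \to X([0,\infty)^2)$ is another solution, so that along some subsequence $N_k$ one has $\Phi_{N_k}(\phi_+, \phi_-) \to \phi$ in distribution on $X([0,\infty)^2)$. The distribution of $\Phi_{N_k}(\phi_+, \phi_-)$ is a Borel probability measure on $X([0,\infty)^2)$ determined solely by the joint law of $(\phi_+, \phi_-)$ on $\Omega$. By the existence part, this sequence of measures converges weakly to the law of $\wt\phi$, hence so does any subsequence. By uniqueness of weak limits in the Polish space $X([0,\infty)^2)$, the law of $\phi$ equals the law of $\wt\phi$. The only point requiring a modicum of care is the measurability argument above; everything else is essentially a translation between almost sure and distributional convergence, combined with the deterministic convergence Theorem~\ref{thm:scheme-conv}.
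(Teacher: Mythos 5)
Your proposal is correct and follows essentially the same route as the paper: apply Theorem~\ref{thm:scheme-conv} pathwise to get almost sure (hence distributional) convergence of $\Phi_{N}(\phi_+,\phi_-)$ to $\wt\phi$, then conclude by uniqueness of limits in distribution. The only addition is your explicit verification that $\Phi$ is Borel as a pointwise limit of the Borel maps $\Phi_N$, which the paper leaves implicit but which is a harmless and reasonable supplement.
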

\begin{proof}
Let $\phi$ be a solution and $N_k$ the corresponding increasing sequence such that \eqref{eq:conv-psiNk} holds.
By Theorem~\ref{thm:scheme-conv}, we have
$\Phi_{N_k}(\phi_+^{(\omega)}, \phi_-^{(\omega)}) \to \wt \phi^{(\omega)}$ in $X([0, \infty)^2)$ with probability 1,
in particular in distribution. Uniqueness of the limit implies that $\phi$ and $\wt \phi$ have the same distribution.
\end{proof}
The advantage of Definition~\ref{def:solution} is that it is meaningful even if
the boundary data are not absolutely continuous.
In the next section we focus on the case where the boundary data are given by the standard Brownian motion.
\subsection{Discrete wave maps with Markov chain as boundary data}
\label{ssec:heat-brownian}
The standard heat kernel on $\bS^d$ is denoted
\begin{equation}
\label{eq:heat-kernel}
p: (0, \infty)\times \bS^{d} \times \bS^d \to (0, \infty).
\end{equation}
We adopt the convention that $f(t, y) = p(t, x_0, y)$ solves $\partial_t f = \frac 12 \Delta_y f$, where $\Delta$ is the Laplace-Beltrami operator on the sphere.
From the uniqueness of the heat kernel, we deduce that for any linear isometry $U: \bR^{d+1} \to \bR^{d+1}$ we have
\begin{equation}
\label{eq:heat-symmetry}
p(t, Ux, Uy) = p(t, x, y),\qquad\text{for all }(t, x, y) \in (0, \infty)\times \bS^d.
\end{equation}
We will use Gaussian bounds on the heat kernel. Such estimates were first obtained by Varadhan \cite{Varadhan-1967},
but for our purposes the statement of Cheng, Li and Yau~\cite{Cheng-Li-Yau} is more convenient.
\begin{lemma}
\label{lem:Cheng-Li-Yau}
There exists $C$ such that for all $A \geq \sqrt{6d}$, $x \in \bS^d$ and $t \in \big(0, \frac 18\big]$
\begin{equation}
\int_{|x - y| \geq A\sqrt{{-}t\log t}}p(t, x, y)\ud y \leq Ct^\frac{A^2}{8}.
\end{equation}
\end{lemma}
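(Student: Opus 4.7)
The plan is to reduce the statement to a standard Gaussian upper bound on the heat kernel, then perform an elementary tail estimate. The key tool is the Cheng--Li--Yau upper bound \cite{Cheng-Li-Yau}: for every $\epsilon > 0$ there exists $C_\epsilon$ such that
\begin{equation}
p(t, x, y) \leq C_\epsilon\, t^{-d/2}\exp\Bigl(-\frac{d_{\bS^d}(x, y)^2}{(4+\epsilon)\,t}\Bigr),\qquad (t, x, y) \in (0, 1]\times \bS^d\times \bS^d,
\end{equation}
where $d_{\bS^d}$ denotes the Riemannian distance on $\bS^d$. Since the straight line segment in $\bR^{d+1}$ from $x$ to $y$ is shorter than any curve on the sphere, we have $|x - y| \leq d_{\bS^d}(x, y)$, and hence the same estimate holds with the Euclidean chord distance in the exponent.

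Next, on the region of integration we have $|x-y|^2 \geq A^2 \cdot(-t\log t)$, so the pointwise bound becomes
\begin{equation}
p(t, x, y) \leq C_\epsilon\, t^{-d/2}\exp\Bigl(-\frac{A^2(-\log t)}{4+\epsilon}\Bigr) = C_\epsilon\, t^{-d/2 + A^2/(4+\epsilon)},
\end{equation}
uniformly on $\{y : |x - y| \geq A\sqrt{-t\log t}\}$. Integrating over this set (whose measure is bounded by the total volume of $\bS^d$, a finite constant depending only on $d$) yields
\begin{equation}
\int_{|x-y| \geq A\sqrt{-t\log t}} p(t, x, y)\ud y \leq C'_\epsilon\, t^{A^2/(4+\epsilon) - d/2}.
\end{equation}

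It remains to choose $\epsilon > 0$ so that the exponent exceeds $A^2/8$. This requires $\frac{A^2}{4+\epsilon} - \frac{d}{2} \geq \frac{A^2}{8}$, i.e.\ $A^2 \geq \frac{4d(4+\epsilon)}{4-\epsilon}$. Since $A^2 \geq 6d$ by hypothesis, any sufficiently small $\epsilon$ (e.g.\ $\epsilon = 1/2$) works for all such $A$. With this choice fixed, we obtain the bound $C\,t^{A^2/8}$, with $C$ depending only on $d$, as long as $t \in (0, \tfrac 18]$ (the restriction ensures $-\log t > 0$ and places us in the regime where the Cheng--Li--Yau bound applies).

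The main obstacle is essentially bookkeeping: one must verify that the loss $t^{-d/2}$ coming from the on-diagonal behaviour of $p(t, x, \cdot)$ is more than compensated by the Gaussian decay when $A^2 \geq 6d$. No deep analytic input beyond the Cheng--Li--Yau estimate is needed, since the integration region has bounded volume, and the only subtlety is the explicit constant comparison $A^2/(4+\epsilon) - d/2 \geq A^2/8$ which holds with room to spare under the stated hypothesis.
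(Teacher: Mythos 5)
Your proof is correct, but it routes through a different form of the Gaussian heat-kernel estimate than the paper does. You invoke the \emph{pointwise} upper bound $p(t,x,y)\lesssim t^{-d/2}\exp\bigl(-d_{\bS^d}(x,y)^2/((4+\epsilon)t)\bigr)$ and integrate it over the tail region, bounding that region's measure by $|\bS^d|$. The paper instead uses \cite[Theorem 3]{Cheng-Li-Yau} verbatim, which is an integrated $L^2$ bound $\int_{\bS^d\setminus B(x,R)}p(t,x,y)^2\ud y\leq Ct^{-d/2}\exp(-R^2/(2\beta t))$, and then passes from $\int p^2$ to $\int p$ by Cauchy--Schwarz; this is precisely why the target exponent is $A^2/8$ rather than $A^2/4$ — half of it is spent in the Cauchy--Schwarz step. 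Your route is more direct and avoids that loss, at the cost of relying on the pointwise Gaussian bound, which is entirely standard on a compact manifold (Li--Yau) but is not literally the statement the paper cites, so you should adjust the reference. Two minor points: (i) the paper normalises the heat kernel by $\partial_t f=\tfrac12\Delta f$, so under that convention the natural Gaussian constant is $2+\epsilon$ rather than $4+\epsilon$; since a weaker exponent is still a valid upper bound, your chain of inequalities survives, and your arithmetic ($A^2/(4+\epsilon)-d/2\geq A^2/8$ for $A^2\geq 6d$ with $\epsilon=1/2$) checks out with room to spare, exactly as you claim. (ii) Your use of $|x-y|\leq d_{\bS^d}(x,y)$ to transfer the Euclidean-chord tail condition to the geodesic one is correct and matches the paper's own remark.
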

\begin{proof}
By symmetry, it suffices to check the inequality for one fixed $x \in \bS^d$.
By \cite[Theorem 3]{Cheng-Li-Yau}, for all $\beta > 1$ and $T > 0$
there exists $C$ such that for all $R \geq 0$ and $t \in (0, T]$
\begin{equation}
\int_{\bS^d \setminus B(x, R)}p(t, x, y)^2\ud y \leq Ct^{-\frac d2}\exp\Big({-}\frac{R^2}{2\beta t}\Big),
\end{equation}
where $B(x, R)$ is the geodesic ball of center $x$ and radius $R$.
Setting $\beta := \frac 32$, $T := \frac 18$ and $R := A\sqrt{{-}t\log t}$,
and using the fact that the Euclidean distance in $\bR^{d+1}$ is smaller than the geodesic distance on $\bS^d$, we obtain
\begin{equation}
\int_{|x-y|\geq A\sqrt{{-}t\log t}}p(t, x, y)^2\ud y \leq Ct^{\frac{A^2}{3}-\frac d2}
\leq Ct^{\frac{A^2}{4}}, \qquad\text{for all }A \geq \sqrt{6d}, t \in \Big(0, \frac 18\Big].
\end{equation}
An application of the Cauchy-Schwarz inequality finishes the proof.
\end{proof}

\begin{definition}
Let $N \in \bN$ and $t > 0$. We say that a random sequence $X: \Omega \times \llbracket 0, N\rrbracket \to \bS^d$ is a \emph{heat Markov chain} with parameter $t$ if its probability density is given by
\begin{equation}
|\bS^d|^{-1}\prod_{n=0}^{N-1}p\big(t, X(n), X(n+1)\big).
\end{equation}
\end{definition}
\begin{remark}
In particular, for all $n$ the probability density of $X(n)$ is $|\bS^d|^{-1}$,
thus we assume the Markov chain starts from a point on the sphere chosen randomly with the uniform distribution.
\end{remark}
\begin{lemma}
\label{lem:reflection}
For all $t > 0$ and $P, Q, S \in \bS^d$ there is the equality
\begin{equation}
\label{eq:reflection}
p(t, P, \cR_{P+Q}S)p(t, \cR_{P+Q}S, Q) = p(t, P, S)p(t, S, Q).
\end{equation}
\end{lemma}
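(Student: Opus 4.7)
The plan is to exploit the two symmetry properties of the heat kernel: the symmetry in the two spatial arguments, $p(t,x,y)=p(t,y,x)$ (valid for any symmetric diffusion), and the isometry invariance \eqref{eq:heat-symmetry}. The key observation is that the map $\cR_{P+Q}$ is itself a linear isometry of $\bR^{d+1}$: when $P+Q\neq 0$, it is the reflection across the line spanned by $P+Q$, equivalently the rotation by $\pi$ in the orthogonal hyperplane, and when $P+Q=0$ it is by definition the antipodal map $-I$, also an isometry. In both cases, its restriction to $\bS^d$ is an isometry of the sphere.

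I would then use \eqref{eq:R-exchange} to record that $\cR_{P+Q}$ swaps $P$ and $Q$. Note that this remains correct in the degenerate case $Q=-P$, since $\cR_0 P = -P = Q$ and $\cR_0 Q = -Q = P$. Combined with the fact that $\cR_{P+Q}$ is an involution (so $\cR_{P+Q}(\cR_{P+Q}S)=S$), the setup is now ready.

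Applying \eqref{eq:heat-symmetry} to the isometry $U=\cR_{P+Q}$ with $(x,y)=(P,S)$ and $(x,y)=(S,Q)$ respectively gives
\begin{equation}
p(t,P,S) = p(t,\cR_{P+Q}P,\cR_{P+Q}S) = p(t,Q,\cR_{P+Q}S) = p(t,\cR_{P+Q}S,Q),
\end{equation}
\begin{equation}
p(t,S,Q) = p(t,\cR_{P+Q}S,\cR_{P+Q}Q) = p(t,\cR_{P+Q}S,P) = p(t,P,\cR_{P+Q}S),
\end{equation}
where the last equality in each line uses the symmetry $p(t,x,y)=p(t,y,x)$. Multiplying the two identities yields \eqref{eq:reflection}.

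There is no serious obstacle here. The only point requiring a brief verification is that $\cR_{P+Q}$ really is an isometry of $\bR^{d+1}$ (and not merely a reflection of a subspace), which can be checked directly from the formula $\cR_Q P = 2|Q|^{-2}(Q\cdot P)Q - P$ by computing $|\cR_Q P|^2 = |P|^2$, together with the explicit handling of the degenerate case $P+Q=0$ via $\cR_0 = -I$.
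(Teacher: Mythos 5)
Your proof is correct and is essentially the paper's own argument: both apply the isometry invariance \eqref{eq:heat-symmetry} with $U=\cR_{P+Q}$, use \eqref{eq:R-exchange} to swap $P$ and $Q$, and finish with the symmetry of the heat kernel in its spatial arguments. Your explicit checks that $\cR_{P+Q}$ is a linear isometry and that the degenerate case $P+Q=0$ causes no trouble are welcome but do not change the route.
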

\begin{proof}
Consider the isometry $U := \cR_{P+Q}$. By \eqref{eq:R-exchange}, we have $U P = Q$ and $UQ = P$,
so \eqref{eq:heat-symmetry} yields $p(t, P, US) = p(t, Q, S) = p(t, S, Q)$
and $p(t, US, Q) = p(t, S, P) = p(t, P, S)$.
\end{proof}

The key observation is the following lemma, expressing the fact
that Markov chains with transition probabilities given
by the keat kernel are preserved by the discrete wave maps equation.
\begin{lemma}
\label{lem:reflections}
Let $M, N \in \bN$, $t > 0$, and let $Y_+: \Omega \times \llbracket 0, M \rrbracket \to \bS^d$,
$Y_-: \Omega \times \llbracket 0, N\rrbracket \to \bS^d$ be random sequences such that
$Y_+(0) = Y_-(0)$ with probability 1, and
\begin{equation}
\begin{aligned}
&\big(Y_-(N), Y_-(N-1), \ldots, Y_-(1), Y_-(0) = Y_+(0), \\
&\qquad\qquad Y_+(1), \ldots, Y_+(M-1), Y_+(M)\big) \in (\bS^d)^{M+N+1}
\end{aligned}
\end{equation}
is a heat Markov chain with parameter $t$.
Let $Y: \Omega \times \llbracket 0, M \rrbracket \times \llbracket 0, N \rrbracket \to \bS^d$
be the solution of the discrete wave maps equation with boundary data $Y_\pm$.

If $(m_j, n_j)_{j=0}^{M+N}$ is a sequence such that $(m_0, n_0) = (0, N)$, $(m_{M+N}, n_{M+N}) = (M, 0)$,
and for every $j$ either $(m_{j+1}, n_{j+1}) = (m_j + 1, n_j)$, or $(m_{j+1}, n_{j+1}) = (m_j, n_j - 1)$,
then the random sequence $(Y(m_j, n_j))_{j=0}^{M+N}$ is a heat Markov chain with parameter $t$.
\end{lemma}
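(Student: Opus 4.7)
The plan is to reduce the general case to the boundary path (which is a heat Markov chain by hypothesis) via a sequence of elementary ``flips'' of the monotone path, each changing the path at a single vertex, and then to use Lemma~\ref{lem:reflection} to verify that every such flip preserves the heat Markov chain density.

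Encode each monotone path from $(0,N)$ to $(M,0)$ by its sequence of $R$/$D$ steps. The ``corner path'' $\gamma_0$, going first down from $(0,N)$ to $(0,0)$ and then right to $(M,0)$, has word $D^N R^M$ and carries exactly the boundary sequence $\bigl(Y_-(N), \ldots, Y_-(0) = Y_+(0), Y_+(1), \ldots, Y_+(M)\bigr)$, which is a heat Markov chain by hypothesis. A bubble-sort argument on the number of $DR$-inversions shows that any two such step-words are connected by a finite chain of adjacent swaps of unlike letters; each such swap replaces, at some position, a ``right-then-down'' excursion $(m,n)\to(m+1,n)\to(m+1,n-1)$ by a ``down-then-right'' excursion $(m,n)\to(m,n-1)\to(m+1,n-1)$ (or vice versa), altering the path at exactly one vertex.

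The key step is to show that such an elementary flip preserves the joint density. Writing $A = Y(m,n)$, $B = Y(m+1,n-1)$, $D = Y(m+1,n)$, $C = Y(m,n-1)$ for the four corners of the affected cell, equation \eqref{eq:Y-cond} at the base point $(m,n-1)$ gives $D = \cR_{A+B} C$, and since $\cR_{A+B}$ is an involution also $C = \cR_{A+B} D$. Hence the flipped sequence is the image of the original sequence under the deterministic map
\begin{equation*}
T_j \colon (x_0, \ldots, x_{M+N}) \mapsto (x_0, \ldots, x_{j-1}, \cR_{x_{j-1}+x_{j+1}} x_j, x_{j+1}, \ldots, x_{M+N}).
\end{equation*}
For fixed $x_{j-1}, x_{j+1}$, the map $x_j \mapsto \cR_{x_{j-1}+x_{j+1}} x_j$ is the restriction of a linear isometry of $\bR^{d+1}$ and hence preserves the spherical measure on $\bS^d$; the associated change of variables has unit Jacobian, and it transforms the heat Markov density along the original path into
\begin{equation*}
|\bS^d|^{-1} \prod_{k \notin \{j-1,j\}} p(t, x_k, x_{k+1}) \cdot p(t, x_{j-1}, \cR_{x_{j-1}+x_{j+1}} x_j) \cdot p(t, \cR_{x_{j-1}+x_{j+1}} x_j, x_{j+1}).
\end{equation*}
Lemma~\ref{lem:reflection} applied with $P=x_{j-1}$, $Q=x_{j+1}$, $S=x_j$ collapses the last two factors to $p(t, x_{j-1}, x_j)\, p(t, x_j, x_{j+1})$, so the pushforward density is again the heat Markov chain density on the new sequence. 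An induction on the number of flips completes the proof.

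The substantive content is the algebraic identity of Lemma~\ref{lem:reflection}: it is exactly what makes the $\cR$-update of the wave map compatible with heat-kernel transition densities, so that the invariance survives at all. Everything else---lattice-path connectivity, preservation of spherical measure by reflections, and the change-of-variables formula---is routine, with the minor caveat that $\cR_{A+B}$ remains well-defined on the null event $\{A+B = 0\}$ thanks to the convention $\cR_0 = -\Id$.
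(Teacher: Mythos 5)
Your proof is correct and follows essentially the same strategy as the paper's: reduce to the boundary path by elementary flips at ``right-then-down'' corners and invoke Lemma~\ref{lem:reflection} at each flip (the paper organises the induction by $\sum_j(m_j+n_j)$ rather than by inversion count, which is an immaterial difference). Your explicit change-of-variables argument, using that $\cR_{x_{j-1}+x_{j+1}}$ preserves the spherical measure in the middle coordinate, usefully fills in a detail the paper leaves implicit.
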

\begin{remark}
Graphically, $(m_j, n_j)_{j=0}^{M+N}$ is a sequence of coordinates starting at the upper left corner,
ending at the lower right corner, and going down or right at each step.
Such a path is (space or light)-like, so the independence of the jumps is expected
because of the finite speed of propagation.
\end{remark}
\begin{proof}[Proof of Lemma~\ref{lem:reflections}]
We argue by induction with respect to $\sum_{j=0}^{M+N}(m_j + n_j)$.
This sum is minimal for the sequence of indices going down to the bottom and then right all the way to the corner, that is
\begin{equation}
(m_j, n_j)_{j=0}^{M+N} = \big((0, N), (0, N-1), \ldots, (0, 1), (0, 0), (1, 0), \ldots, (M-1, 0), (M, 0)\big),
\end{equation}
which corresponds to the boundary data and thus is indeed a heat Markov chain, by assumption.

Let $(m_j, n_j)_{j=0}^{M+N}$ be some other sequence. Then there exists $k \in \{1, \ldots, M+N-1\}$ such that
\begin{equation}
(m_{k}, n_{k}) = (m_{k - 1}+1, n_{k-1}) = (m_{k + 1}, n_{k+1}+1),
\end{equation}
in other words a place in the sequence where it moves right and then down
(if such $k$ did not exist, then the sequence would have to correspond to $N$ moves down
followed by $M$ moves to the right, which is the sequence corresponding to the boundary data).

We let $(m_j', n_j')_{j=0}^{M+N}$ be the sequence defined by $(m_j', n_j') = (m_j, n_j)$ if $j \neq k$
and $(m_{k}', n_{k}') = (m_{k} - 1, n_{k} - 1) = (m_{k-1}, n_{k+1})$.
We thus have
\begin{equation}
Y(m_j', n_j') = Y(m_j, n_j), \qquad\text{for all }j \neq k
\end{equation}
and
\begin{equation}
\begin{aligned}
Y(m_k, n_k) &= \cR_{Y(m_k-1, n_k) + Y(m_k, n_k-1)}Y(m_k-1, n_k-1) \\
&= \cR_{Y(m_{k-1}, n_{k-1}) + Y(m_{k+1}, n_{k+1})}Y(m_k-1, n_k-1) \\
&= \cR_{Y(m_{k-1}', n_{k-1}') + Y(m_{k+1}', n_{k+1}')}Y(m_k', n_k').
\end{aligned}
\end{equation}

By the induction hypothesis, the random sequence $\big(Y(m_j', n_j')\big)_{j=0}^{M+N}$
is a heat Markov chain.
By Lemma~\ref{lem:reflection}, the same is true for the sequence $\big(Y(m_j, n_j)\big)_{j=0}^{M+N}$.
\end{proof}
\begin{corollary}
\label{cor:discr-inv}
Let $M, N, Y_+, Y_-$ and $Y$ be as in Lemma~\ref{lem:reflections}.
Let $(m_0, n_0) \in \llbracket 0, M \rrbracket \times \llbracket 0, N\rrbracket$
and $(m_j, n_j) \in \llbracket 0, M - m_0 \rrbracket \times \llbracket 0, N - n_0\rrbracket$
for $j \in \{1, 2, \ldots, J\}$.
Then the random vectors $\big(Y(m_j, n_j)\big)_{j=1}^J$ and $\big(Y(m_0 + m_j, n_0 + n_j)\big)_{j=1}^J$ have the same distribution.
\end{corollary}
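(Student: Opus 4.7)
The plan is to reduce everything to Lemma~\ref{lem:reflections} by a translation argument. I would first set $\wt Y(m, n) := Y(m_0 + m, n_0 + n)$ on $\llbracket 0, M - m_0\rrbracket \times \llbracket 0, N - n_0\rrbracket$. Because equation \eqref{eq:Y-cond} is translation invariant, $\wt Y$ is again a discrete wave map, with boundary data $\wt Y_+(m) = Y(m_0 + m, n_0)$ and $\wt Y_-(n) = Y(m_0, n_0 + n)$. An induction on $m + n$ using \eqref{eq:Y-bdry}--\eqref{eq:Y-cond} shows that the value of a discrete wave map at a prescribed point is a fixed Borel-measurable function of the boundary data, so there is a single function $F$ (depending only on the finite family of offsets $(m_j, n_j)_{j=1}^J$) with $(Y(m_0 + m_j, n_0 + n_j))_j = F(\wt Y_+, \wt Y_-)$ and $(Y(m_j, n_j))_j = F(Y_+\vert_{\llbracket 0, M - m_0 \rrbracket}, Y_-\vert_{\llbracket 0, N - n_0 \rrbracket})$. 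The task therefore reduces to proving equality in law of the pair $(\wt Y_+, \wt Y_-)$ and the pair $(Y_+\vert_{\llbracket 0, M - m_0 \rrbracket}, Y_-\vert_{\llbracket 0, N - n_0 \rrbracket})$.

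Both joint distributions fit the bookkeeping convention of Lemma~\ref{lem:reflections}: after concatenating the reversed $-$-sequence with the $+$-sequence, each becomes a sequence of length $(M - m_0) + (N - n_0) + 1$ in $\bS^d$, and I claim that each is a heat Markov chain with parameter $t$. For the restricted boundary data this is automatic, since it is a consecutive sub-sequence of the heat Markov chain defining $(Y_+, Y_-)$, and the remark following the definition of a heat Markov chain (every marginal is uniform on $\bS^d$) ensures that such a consecutive sub-sequence is again a heat Markov chain of the shorter length. For the shifted boundary data, I would invoke Lemma~\ref{lem:reflections} on the admissible staircase $(0, N) \to (m_0, N) \to (m_0, n_0) \to (M, n_0) \to (M, 0)$, every step of which is either right or down. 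The stretch of this path from $(m_0, N)$ down to $(m_0, n_0)$ and then right to $(M, n_0)$ is, by the definition of $\wt Y$, exactly the concatenation of $\wt Y_-$ reversed with $\wt Y_+$, hence is a heat Markov chain with parameter $t$.

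Since both concatenations are heat Markov chains with the same parameter $t$ and the same length, they coincide in law, and applying the deterministic measurable map $F$ on both sides completes the proof.

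I do not anticipate any substantial analytic obstacle. The steps requiring a little care are (i) verifying that $\wt Y$ really is a discrete wave map with the stated boundary data — an immediate consequence of the translation invariance of \eqref{eq:Y-cond} combined with Lemma~\ref{lem:discr-exist} — and (ii) justifying that the map sending boundary data to a prescribed finite family of interior values is the \emph{same} deterministic function for the shifted and the restricted problems, so that equality in distribution of the boundary data transfers to the finite-dimensional law sought in the statement. Both are bookkeeping; the substance of the argument is packaged in Lemma~\ref{lem:reflections}.
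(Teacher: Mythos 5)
Your proposal is correct and follows essentially the same route as the paper: reduce to the boundary data via the Borel solution map of Lemma~\ref{lem:discr-exist}, then compare the restricted original boundary data (a consecutive sub-chain of the given heat Markov chain) with the shifted boundary data (a consecutive stretch of the staircase through $(m_0,n_0)$, which is a heat Markov chain by Lemma~\ref{lem:reflections}). The paper's write-up is terser but the argument is the same; your explicit marginalization remark for consecutive sub-sequences is a detail the paper leaves implicit.
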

\begin{proof}
Let $\wt m := \max\{m_j: 1 \leq j \leq J\}$ and $\wt n := \max\{n_j: 1 \leq j \leq J\}$.
By Lemma~\ref{lem:discr-exist}, there exists a Borel map $\cY:(\bS^d)^{m_0+1}\times (\bS^d)^{n_0+1} \to (\bS^d)^J$ such that
\begin{equation}
\big(Y(m_j, n_j)\big)_{j=1}^J = \cY\big(Y_+\vert_{\llbracket 0, \wt m\rrbracket}, Y_-\vert_{\llbracket 0, \wt n\rrbracket}\big)
\end{equation}
and
\begin{equation}
\big(Y(m_0 + m_j, n_0 + n_j)\big)_{j=1}^J = \cY\big(Y(m_0 + \cdot, n_0)\vert_{\llbracket 0, \wt m\rrbracket}),
Y(m_0, n_0 + \cdot)\vert_{\llbracket 0, \wt n\rrbracket}\big).
\end{equation}
From Lemma~\ref{lem:reflections}, we infer that $\big(Y_+\vert_{\llbracket 0, \wt m\rrbracket}, Y_-\vert_{\llbracket 0, \wt n\rrbracket}\big)$ and $\big(Y(m_0 + \cdot, n_0)\vert_{\llbracket 0, \wt m\rrbracket}),
Y(m_0, n_0 + \cdot)\vert_{\llbracket 0, \wt n\rrbracket}\big)$ have the same distribution,
hence $\big(Y(m_j, n_j)\big)_{j=1}^J$ and $\big(Y(m_0 + m_j, n_0 + n_j)\big)_{j=1}^J$ have the same distribution as well.
\end{proof}

We are ready to estimate the modulus of continuity for discrete wave maps with heat Markov chains
as boundary data.
\begin{lemma}
\label{lem:main-lemma}
There exists $C = C(d)$ such that for all $L \in \bN$ and $N \in \{4, 5, \ldots\}$ the following is true.
Let $Y_+, Y_-: \Omega \times \llbracket 0, 2^{N+L}\rrbracket \to \bS^d$
be such that $Y_+(0) = Y_-(0)$ with probability 1 and the random sequence
\begin{equation}
\big(Y_-(2^{N+L}), \ldots, Y_-(0) = Y_+(0), \ldots, Y_+(2^{N+L})\big)
\end{equation}
is a heat Markov chain with parameter $2^{-N}$,
and let $Y: \Omega \times \llbracket 0, 2^{N+L}\rrbracket^2 \to \bS^d$
be the discrete wave map corresponding to the boundary data $Y_\pm$. Then for all $A \geq 0$
\begin{equation}
\label{eq:cont-mod-ineq}
\begin{aligned}
&\bP\bigg(\bigg\{\max_{\substack{0 \leq m, n, \sh m, \sh n \leq 2^{N+L} \\ (m, n) \neq (\sh m, \sh n)}}\frac{|Y(\sh m, \sh n) - Y(m, n)|}{h(2^{-N}(|\sh m-m| + |\sh n-n|))} \geq 32A\bigg\} \cup \\
&\qquad \cup \big\{\|\delta_m Y\|_{\ell^\infty} \geq 8Ah(2^{-N})\big\} \cup \big\{\|\delta_n Y\|_{\ell^\infty}
\geq 8Ah(2^{-N})\big\}\bigg)\leq C4^{L-A^2 / 8},
\end{aligned}
\end{equation}
where $h(\rho) := \sqrt{-\rho \log \rho}$ for all $\rho \in (0, \eee^{-1}]$, $h(\rho) := \eee^{-\frac 12}$ for all $\rho \geq \eee^{-1}$.
\end{lemma}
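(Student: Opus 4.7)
I follow the L\'evy-type strategy for the modulus of continuity of Brownian motion (cf.\ Section~\ref{ssec:summary}), adapted to the two-dimensional discrete setting. The argument has three ingredients: translation-invariance of the distribution of $Y$ along monotone paths (Corollary~\ref{cor:discr-inv}), the Gaussian tail of the heat kernel (Lemma~\ref{lem:Cheng-Li-Yau}), and a dyadic chaining in two dimensions based on pairs of grid points whose \emph{both} coordinates are multiples of the same dyadic scale. Restricting the union bound to such ``fully-dyadic'' pairs is the crucial observation that prevents a spurious factor of $2^N$ from appearing.

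\textbf{Step 1 (union bound on fully-dyadic pairs).} By Corollary~\ref{cor:discr-inv}, any horizontal or vertical subsequence of $Y$ is a heat Markov chain with parameter $2^{-N}$. For each scale $\ell \in \llbracket 0, N{-}3\rrbracket$ (ensuring $t = 2^{\ell - N} \leq 1/8$) and each pair of the form $\bigl((j 2^\ell, k 2^\ell), ((j{+}1)2^\ell, k 2^\ell)\bigr)$ or $\bigl((j 2^\ell, k 2^\ell), (j 2^\ell, (k{+}1)2^\ell)\bigr)$ with both endpoints in the grid, Lemma~\ref{lem:Cheng-Li-Yau} applied for $A \geq \sqrt{6d}$ gives the tail bound $\leq C (2^{\ell-N})^{A^2/8}$ on the event that the $Y$-increment exceeds $A\, h(2^{\ell - N})$. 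Let $\cE$ be the union of these events over scales and pairs. Since the number of such pairs at scale $\ell$ is $\leq 4 \cdot 2^{2(N+L-\ell)}$,
\[
\bP(\cE) \leq 4C \cdot 2^{2(N+L) - N A^2/8} \sum_{\ell=0}^{N-3} 2^{\ell(A^2/8 - 2)}.
\]
For $A \geq \sqrt{24}$ the last sum is geometric of ratio $\geq 2$, and a direct computation (using $3 A^2/8 \geq A^2/4$) yields $\bP(\cE) \leq C_1 \cdot 4^{L - A^2/8}$. For $A < \sqrt{24}$ the bound is trivially $\geq 1$ upon enlarging $C_1$.

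\textbf{Step 2 (dyadic chaining).} On $\cE^c$ I derive the modulus bound. Fix $P = (m, n), P' = (\sh m, \sh n)$ with Manhattan distance $\Delta = |\sh m - m| + |\sh n - n| \geq 1$ and $\ell_0 = \lfloor \log_2 \Delta\rfloor$. For $\ell \in \llbracket 0, \ell_0\rrbracket$, let $P_\ell$ (resp.\ $P'_\ell$) denote the coordinatewise rounding of $P$ (resp.\ $P'$) down to multiples of $2^\ell$. I chain
\[
Y(P) = Y(P_0) \to Y(P_1) \to \cdots \to Y(P_{\ell_0}) \to Y(P'_{\ell_0}) \to \cdots \to Y(P'_1) \to Y(P'_0) = Y(P'),
\]
where each transition $Y(P_\ell) \to Y(P_{\ell+1})$ uses at most two scale-$\ell$ dyadic jumps --- possible because $P_\ell, P_{\ell+1}$ are both multiples of $2^\ell$ differing by $0$ or $2^\ell$ in each coordinate --- while the central transition $Y(P_{\ell_0}) \to Y(P'_{\ell_0})$ uses $O(1)$ scale-$\ell_0$ dyadic jumps (since $|P_{\ell_0} - P'_{\ell_0}| = O(2^{\ell_0})$ in each coordinate). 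Every jump in the chain lies in $\cE$. A direct computation using $h(2^{\ell - N}) \sim 2^{(\ell - N)/2}\sqrt{N - \ell}$ shows $\sum_{\ell=0}^{\ell_0}h(2^{\ell-N}) \leq C h(2^{\ell_0 - N})$, hence $|Y(P') - Y(P)| \leq 32 A\, h(2^{\ell_0 - N}) \leq 32 A\, h(2^{-N}\Delta)$ after tracking constants. When $\Delta \geq 2^{N-2}$ (equivalently $2^{-N}\Delta \geq e^{-1}$), the bound is trivial because then $h(2^{-N}\Delta) = e^{-1/2}$ while $|Y| \leq 1$. The single-step bounds on $\|\delta_m Y\|_{\ell^\infty}$ and $\|\delta_n Y\|_{\ell^\infty}$ follow directly from $\cE^c$ at scale $\ell = 0$.

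\textbf{Main obstacle.} The principal delicacy is Step 2: organising the chain so that every jump is a scale-$\ell$ dyadic jump with \emph{both} coordinates multiples of $2^\ell$, as required for the event to belong to $\cE$. The descent $P_\ell \to P_{\ell+1}$ via successive coordinatewise flooring achieves this automatically, and the geometric-sum estimate for $\sum h(2^{\ell - N})$ together with the combinatorial bookkeeping is routine but must be tracked to reach the stated constant $32$. The choice in Step 1 of restricting to fully-dyadic pairs, rather than considering every horizontal pair in every row, is essential: a row-by-row union bound would introduce a factor $2^N$ that cannot be absorbed into the claimed bound $C\, 4^{L - A^2/8}$.
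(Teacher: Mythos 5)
Your proposal is correct and follows essentially the same route as the paper: a union bound over fully‑dyadic pairs at each scale (using Lemma~\ref{lem:reflections} to identify the two‑point law with the heat kernel and Lemma~\ref{lem:Cheng-Li-Yau} for the tail), followed by a two‑dimensional dyadic chaining on the complementary event, with the sum $\sum_\ell h(2^{\ell-N})\lesssim h(2^{\ell_0-N})$ closing the argument. The one point to watch is the constant: your chain (separate coordinatewise floors of $P$ and $P'$ plus a central transition of $O(1)$ scale‑$\ell_0$ jumps) yields roughly $36A$--$38A$ rather than $32A$; the paper instead descends from a \emph{single} common dyadic anchor $(j_02^{M_0},k_02^{M_0})$ to each endpoint, giving exactly $16A+16A=32A$ with no central transition to pay for.
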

\begin{proof}
We can assume $A > \max(\sqrt{6d}, \sqrt {32})$ (for smaller $A$ the bound is evident
if $C$ is large enough).
If $|\sh m - m| \geq 2^{N-3}$ or $|\sh n - n| \geq 2^{N-3}$, then
\begin{equation}
\frac{|Y(\sh m, \sh n) - Y(m, n)|}{h(2^{-N}(|\sh m-m| + |\sh n-n|))} \leq \frac{2}{h(2^{-N}2^{N-3})} = \frac{2}{\sqrt{\frac 18\log 8}} \leq 8 < 32A,
\end{equation}
so such $m, n, \sh m, \sh n$ can be ignored for the purpose of proving \eqref{eq:cont-mod-ineq}.

For $0 \leq M \leq N-3$, set
\begin{equation}
\begin{aligned}
a_M &:= \bP\bigg(\sup_{\substack{0 \leq j < 2^{N-M+L} \\ 0 \leq k \leq 2^{N-M+L}}}
\frac{|Y((j+1)2^M, k2^M) - Y(j2^M, k2^M)|}{h(2^{M-N})}\geq A\bigg), \\
b_M &:= \bP\bigg(\sup_{\substack{0 \leq j \leq 2^{N-M+L} \\ 0 \leq k < 2^{N-M+L}}}
\frac{|Y(j2^M, (k+1)2^M) - Y(j2^M, k2^M)|}{h(2^{M-N})}\geq A\bigg).
\end{aligned}
\end{equation}
We claim that
\begin{equation}
\label{eq:cont-mod-step1}
\max(\max_M a_M, \max_M b_M) \leq C2^{2(N-M+L)+1}2^{(M-N)A^2/8}.
\end{equation}
We do the computation for $a_M$, the one for $b_M$ being analogous.

Fix $0 \leq j < 2^{N-M+L}$ and $0 \leq k \leq 2^{N-M+L}$.
By Lemma~\ref{lem:reflections}, $(x, y) = (Y(j2^M, k2^M), Y((j+1)2^M, k2^M))$ has density $|\bS^d|^{-1}p(2^{M-N}, x, y)$.
Letting $t = 2^{M-N}$, we get by Lemma~\ref{lem:Cheng-Li-Yau}
\begin{equation}
\begin{aligned}
\bP\bigg(\frac{|Y((j+1)2^M, k2^M) - Y(j2^M, k2^M)|}{h(2^{M-N})}\geq A\bigg) &=
\int_{|y - x| \geq Ah(t)}p(t, x, y)\ud y \\
&\leq Ct^\frac{A^2}{8} = C2^{(M-N)A^2/8}.
\end{aligned}
\end{equation}
Estimating the probability of the union by the sum of probabilities for all possible $j$ and $k$, we obtain \eqref{eq:cont-mod-step1}.

Let $X_A$ be the event that there exist $M \in \{0, 1, \ldots, N-3\}$
and a pair $(j, k)$ such that
\begin{equation}
\frac{|Y((j+1)2^M, k2^M) - Y(j2^M, k2^M)|}{h(2^{M-N})}\geq {A}\quad\text{or}\quad
\frac{|Y(j2^M, (k+1)2^M) - Y(j2^M, k2^M)|}{h(2^{M-N})}\geq {A}.
\end{equation}
Taking the sum in $M$ in \eqref{eq:cont-mod-step1}, we obtain
\begin{equation}
\begin{aligned}
\bP(X_A) &\leq 2C\sum_{M=0}^{N-3} 2^{2(N-M+L)+1}2^{(M-N)A^2/8}
 = 4^{L+1}C\sum_{M=0}^{N-3}2^{(M-N)(A^2/8-2)} \\
&\leq 4^{L+1}C \sum_{l \geq 3}2^{-l(A^2/8-2)}
\leq 4^{L+1}C 4^{-A^2/8+2}\sum_{l\geq 1}{2^{-2l}},
\end{aligned}
\end{equation}
where the last inequality follows from $A^2/8 \geq 4$.

Taking $M = 0$ in the definition of the event $X_A$, we see that the fact that $X_A$ does not occur implies
\begin{equation}
\|\delta_m Y\|_{\ell^\infty} \leq Ah(2^{-N}) < 8Ah(2^{-N}), \qquad \|\delta_n Y\|_{\ell^\infty} < 8Ah(2^{-N}).
\end{equation}

It remains to show that if $X_A$ does not occur, then
\begin{equation}
\label{eq:mod-cont-bound}
\frac{|Y(\sh m, \sh n) - Y(m, n)|}{h(2^{-N}(|\sh m-m| + |\sh n-n|))} < 32A
\end{equation}
for all $0 \leq m, n, \sh m, \sh n \leq 2^{N+L}$ such that $0 < \max(|\sh m-m|, |\sh n-n|) < 2^{N-3}$.
Fix any such $m, n, \sh m, \sh n$ and let $M_0$ be the unique integer such that $2^{M_0} \leq \max(|\sh m-m|, |\sh n-n|) < 2^{M_0 + 1}$. Note that $0 \leq M_0 < N-3$.
We claim that there exists $j_0 \in \llbracket 0, 2^{N+L-M_0}\rrbracket$ such that
\begin{equation}
\label{eq:seq-j-base}
\max(|m - j_02^{M_0}|, |\sh m - j_02^{M_0}|) < 2^{M_0+1}.
\end{equation}
Without loss of generality, assume $m \geq \sh m$ and let $j_0$ be the integer part of $m2^{-M_0}$.
Then it is clear that $j_02^{M_0} \leq m < j_02^{M_0}+2^{M_0}$, and $j_02^{M_0} + 2^{M_0} > m \geq \sh m > m - 2^{M_0+1} \geq j_02^{M_0} - 2^{M_0+1}$.
Analogously, let $k_0 \in \llbracket 0, 2^{N+L-M_0}\rrbracket$ be such that
\begin{equation}
\max(|n - k_02^{M_0}|, |\sh n - k_02^{M_0}|) < 2^{M_0+1}.
\end{equation}

We estimate $|Y(m, n) - Y(j_02^{M_0}, k_02^{M_0})|$ as follows.
We define recursively a sequence $j_1, \ldots j_{M_0}, j_{M_0+1}$
such that for all $l \geq 1$
\begin{gather}
\label{eq:seq-j-1}
|m - j_l 2^{M_0-l}| < 2^{M_0-l+1}, \\
\label{eq:seq-j-2}
j_l \in \{2j_{l-1} - 2, 2j_{l-1}, 2j_{l-1}+2\}.
\end{gather}
Note that \eqref{eq:seq-j-base} shows that \eqref{eq:seq-j-1} holds for $l = 0$.
Once $j_0, \ldots, j_{l-1}$ are constructed, we see from \eqref{eq:seq-j-1} that there exists $\iota_{l-1} \in \{{-}1, 0, 1\}$
such that $|m - (j_{l-1}+\iota_{l-1})2^{M_0 - l}| < 2^{M_0-l}$, and it suffices to set $j_l := 2(j_{l-1}+\iota_{l-1})$.

Analogously, we define $k_1, \ldots, k_{M_0}, k_{M_0+1}$ such that for all $l \geq 1$
\begin{gather}
\label{eq:seq-k-1}
|n - k_l 2^{M_0-l}| < 2^{M_0-l+1}, \\
\label{eq:seq-k-2}
k_l \in \{2k_{l-1} - 2, 2k_{l-1}, 2k_{l-1}+2\}.
\end{gather}

Since $X_A$ does not occur, for every $l \in \{0, \ldots, M_0\}$ we have
\begin{equation}
\begin{aligned}
&|Y(j_{l} 2^{M_0-l}, k_{l}2^{M_0-l}) - Y(j_{l+1}2^{M_0-l-1}, k_{l}2^{M_0-l})| \\
&\qquad = |Y(j_{l} 2^{M_0-l}, k_{l}2^{M_0-l}) - Y((j_{l} + \iota_{l}) 2^{M_0-l}, k_{l}2^{M_0-l})| < Ah(2^{M_0-l-N}).
\end{aligned}
\end{equation}
Similarly,
\begin{equation}
\begin{aligned}
|Y(j_{l+1} 2^{M_0-l-1}, k_{l}2^{M_0-l}) - Y(j_{l+1}2^{M_0-l-1}, k_{l+1}2^{M_0-l-1})| < Ah(2^{M_0-l-N}),
\end{aligned}
\end{equation}
hence
\begin{equation}
|Y(j_{l} 2^{M_0-l}, k_{l}2^{M_0-l}) - Y(j_{l+1}2^{M_0-l-1}, k_{l+1}2^{M_0-l-1})| < 2A h(2^{M_0-l-N}),
\end{equation}
Taking the sum in $l$, we obtain
\begin{equation}
\label{eq:dist-to-vertex}
\begin{aligned}
|Y(m, n) - Y(j_02^{M_0}, k_02^{M_0})| &\leq 2A\sum_{l=0}^{M_0}h(2^{M_0-l-N}) \leq 2A \sum_{l=0}^\infty h(2^{-l}\rho)
,\end{aligned}
\end{equation}
where $\rho := 2^{M_0 - N} \in (0, 1/16]$. We claim that
\begin{equation}
\label{eq:rho-sum}
\sum_{l=0}^\infty h(2^{-l}\rho) < 8h(\rho), \qquad\text{for all }0 < \rho \leq 1/16.
\end{equation}
Indeed,
\begin{equation}
\begin{aligned}
\sum_{l=0}^\infty \sqrt{2^{-l}\rho({-}\log \rho + l\log 2)} < \sqrt{-\rho\log \rho}\sum_{l=0}^\infty 2^{-\frac l2}
+ \sqrt\rho\log 2\sum_{l=1}^\infty \sqrt{l2^{-l}}.
\end{aligned}
\end{equation}
We have $\sum_{l=0}^\infty 2^{-\frac l2} = 2 + \sqrt 2$, $\sqrt\rho\log 2 \leq \frac 12 h(\rho)$ and
\begin{equation}
\sum_{l=1}^\infty \sqrt{l2^{-l}} \leq \bigg(\sum_{l=1}^\infty l2^{-\frac l2}\bigg)^\frac 12\bigg(\sum_{l=1}^\infty 2^{-\frac l2}\bigg)^\frac 12 = (1+\sqrt 2)\sqrt{2 + \sqrt 2},
\end{equation}
and elementary arithmetics leads to \eqref{eq:rho-sum}.

From \eqref{eq:dist-to-vertex} and \eqref{eq:rho-sum}, we have $|Y(m, n) - Y(j_02^{M_0}, k_02^{M_0})| < 16Ah(\rho)$.
Analogously, $|Y(\sh m, \sh n) - Y(j_02^{M_0}, k_02^{M_0})| < 16Ah(\rho)$.
Since $2^{-N}(|\sh m - m| + |\sh n - n|) \geq \rho$, we obtain \eqref{eq:mod-cont-bound}.
\end{proof}

Let $x_0 \in \bS^d$ and let $B^{(x_0)}: \Omega \times [0, \infty) \to \bS^d$
be the standard Brownian motion starting from $x_0$.
Then
\begin{equation}
\label{eq:transition}
\bP\big(B^{(x_0)}(t) \in A\big) = \int_{A} p(t, x_0, y)\ud y, \qquad\text{for all }t > 0\text{ and }A \in \cB(\bS^d),
\end{equation}
see \cite[Section 4.1]{Hsu}.

Our boundary data will be a pair of Brownian motions $\psi_+, \psi_-: \Omega \to X([0, \infty))$, starting from the same
point of $\bS^d$, chosen randomly with uniform distribution.
They can be obtained by letting $\psi_0: \Omega \to X(\bR)$ be the standard
Brownian motion starting from a uniformly chosen random point,
and setting $\psi_+(u) := \psi_0(u)$ and $\psi_-(v) := \psi_0(-v)$ for all $u, v \geq 0$.

\subsection{Existence of wave maps with Brownian motion as boundary data}
\label{sec:prochorow}

\begin{theorem}
\label{thm:main}
Let $\phi_+, \phi_-: \Omega \to X([0, \infty))$ be independent standard Brownian motions
starting from the same uniformly chosen point on $\bS^d$.
There exists a solution $\phi: \Omega' \to X([0, \infty)^2)$ of \eqref{eq:wm-cauchy}
in the sense of Definition~\ref{def:solution}. It has the following properties:
\begin{enumerate}[1)]
\item
\label{it:invariance}
for all $(u_0, v_0) \in [0, \infty)^2$, the random field $\xi: \Omega' \to X([0, \infty)^2)$ defined by
$\xi^{(\omega)}(u, v) := \phi^{(\omega)}(u_0 + u, v_0 + v)$
has the same distribution as $\phi$,
\item
\label{it:modulus}
for all $\wt L > 0$,
\begin{equation}
\label{eq:modulus}
\sup_{\substack{0 \leq u, v, \sh u, \sh v \leq \wt L \\ (u, v) \neq (\sh u, \sh v)}}\frac{|\phi(\sh u, \sh v) - \phi(u, v)|}{h(|(\sh u - u, \sh v - v)|)} < \infty,\qquad\text{with probability 1.}
\end{equation}
\end{enumerate}
\end{theorem}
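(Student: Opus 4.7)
The plan is to construct $\phi$ as a subsequential limit in distribution of $\Phi_N(\phi_+, \phi_-)$ via Prokhorov's theorem, using Lemma~\ref{lem:main-lemma} for uniform tightness and Corollary~\ref{cor:discr-inv} for invariance. The first observation is that the discretisation of the Brownian boundary data at the dyadic mesh $\{k2^{-N}\}_{k \geq 0}$ is exactly a heat Markov chain with parameter $2^{-N}$: the common initial value $\phi_+(0) = \phi_-(0)$ is uniformly distributed on $\bS^d$, the two motions are independent given this initial point, and the transition density of the $\bS^d$-valued Brownian motion over time $2^{-N}$ is $p(2^{-N}, \cdot, \cdot)$, see \eqref{eq:transition}. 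Hence Lemma~\ref{lem:main-lemma} applies to the discrete wave map $Y^{(N)}$ underlying $\Phi_N(\phi_+, \phi_-)$.

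Next I would combine Lemma~\ref{lem:main-lemma} with Lemma~\ref{lem:ext-cont}, which transfers a modulus of continuity estimate on $Y^{(N)}$ to one on $\Phi_N(\phi_+, \phi_-)$ at the cost of a universal constant. Fixing $L$ and taking $A \asymp \sqrt{L + s}$, the resulting tail bound is summable over $L$, so for every $\varepsilon > 0$ I obtain a closed set of functions with prescribed modulus of continuity on each compact $[0, 2^L]^2$ which, by the Arzelà-Ascoli theorem, is compact in the Fréchet space $C([0, \infty)^2)$; its intersection with $X([0, \infty)^2)$ is then a compact $K_\varepsilon \subset X([0, \infty)^2)$ with $\bP(\Phi_N(\phi_+, \phi_-) \in K_\varepsilon) \geq 1 - \varepsilon$ uniformly in $N$. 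Prokhorov's theorem then yields an increasing subsequence $N_k$ such that $\Phi_{N_k}(\phi_+, \phi_-)$ converges in distribution to some $\phi: \Omega' \to X([0, \infty)^2)$, which is by definition a solution in the sense of Definition~\ref{def:solution}. Property \ref{it:modulus}) follows immediately because the event on the left of \eqref{eq:modulus} (for any fixed constant in place of $\infty$) is closed in $X([0, \infty)^2)$, so the Portmanteau theorem preserves the probability bound under the weak limit.

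The main obstacle is property \ref{it:invariance}), since Corollary~\ref{cor:discr-inv} only yields discrete translation invariance at shifts of the form $(j 2^{-N_k}, l 2^{-N_k})$, whereas $(u_0, v_0)$ is arbitrary. To handle this I would choose integers $j_k, l_k$ with $(j_k 2^{-N_k}, l_k 2^{-N_k}) \to (u_0, v_0)$. Corollary~\ref{cor:discr-inv}, via the extension formula \eqref{eq:wtY-def}, gives
\begin{equation}
\Phi_{N_k}(\phi_+, \phi_-)(j_k 2^{-N_k} + \cdot,\, l_k 2^{-N_k} + \cdot) \stackrel{d}{=} \Phi_{N_k}(\phi_+, \phi_-)
\end{equation}
as random elements of $X([0, \infty)^2)$. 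The right-hand side converges in distribution to $\phi$. For the left-hand side, I would invoke Skorokhod's representation theorem on the subsequence to realise $\Phi_{N_k}(\phi_+, \phi_-) \to \phi$ almost surely in the locally uniform topology on a common probability space; then the joint continuity of the translation map $(\psi, (a, b)) \mapsto \psi(a + \cdot, b + \cdot): X([0, \infty)^2) \times [0, \infty)^2 \to X([0, \infty)^2)$ (which is straightforward since convergence is uniform on compacts) yields almost sure convergence of the translates to $\phi(u_0 + \cdot, v_0 + \cdot)$. Equality in distribution is preserved under weak limits, giving \ref{it:invariance}).
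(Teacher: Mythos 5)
Your overall strategy (heat-Markov-chain identification, Lemma~\ref{lem:main-lemma} plus Lemma~\ref{lem:ext-cont} for a uniform modulus-of-continuity bound on $\Phi_N$, Prokhorov, then Corollary~\ref{cor:discr-inv} for invariance) is the same as the paper's, but two steps as written do not go through. First, the tightness step: you claim $\bP(\Phi_N(\phi_+,\phi_-) \in K_\varepsilon) \geq 1-\varepsilon$ for a compact $K_\varepsilon \subset X([0,\infty)^2)$ obtained by intersecting the Arzel\`a--Ascoli set with $X([0,\infty)^2)$. But $\Phi_N(\phi_+,\phi_-)$ does \emph{not} take values in $X([0,\infty)^2)$: the extension \eqref{eq:wtY-def} is a bilinear interpolation of points of $\bS^d$ and leaves the sphere off the mesh (the paper notes this explicitly after \eqref{eq:wtY-def}), so the probability of landing in your $K_\varepsilon$ is $0$, not $\geq 1-\varepsilon$. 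The fix is the paper's: run Prokhorov in $C([0,\infty)^2)$, and only afterwards show the limit is $\bS^d$-valued by observing that $\phi_{N_k}(u,v)\in\bS^d$ at dyadic $(u,v)$ for $k$ large, hence $\phi(u,v)\in\bS^d$ a.s.\ at dyadic points and, by continuity, everywhere. Second, your justification of property~\ref{it:modulus}): the sup functional is only lower semicontinuous on $C([0,2^L]^2)$ (the denominator $h$ degenerates near the diagonal), so the event $\{\sup \geq 64A\}$ is a $G_\delta$, not closed; and even for closed sets Portmanteau gives $\limsup_k\bP(X_k\in F)\leq \bP(X\in F)$, which bounds the limit probability from \emph{below} by the approximants, the opposite of what you need. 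The correct route is to use the open set $\{\sup > 64A\}$ (open precisely by lower semicontinuity) and the open-set half of Portmanteau, $\bP(\phi\in G)\leq\liminf_k\bP(\phi_{N_k}\in G)\leq C4^{L-A^2/8}$, then let $A\to\infty$.

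Your treatment of property~\ref{it:invariance}) is correct and genuinely different from the paper's. The paper works with cylindrical sets: it shows equality of finite-dimensional marginals at dyadic points via Corollary~\ref{cor:discr-inv} and then approximates arbitrary points by dyadic ones using a.s.\ continuity of $\phi$. You instead upgrade Corollary~\ref{cor:discr-inv} to equality in law of the entire integer-shifted discrete field (legitimate, since laws on the product space are determined by finite-dimensional marginals and the extension \eqref{eq:wtY-def} commutes with integer shifts), and then pass to the limit via Skorokhod representation together with joint continuity of $(\psi,(a,b))\mapsto\psi(a+\cdot,b+\cdot)$. This buys you equality in law of the whole shifted field in one stroke without re-examining marginals, at the cost of invoking Skorokhod on the Polish space $C([0,\infty)^2)$; both arguments are of comparable length.
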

\begin{remark}
The property \ref{it:invariance} expresses the invariance of the field
under time-like translations.
This is not surprising, since the constructed random field
is closely related to the Gibbs measure for the wave maps equation, see Section~\ref{ssec:gibbs} below.
\end{remark}
\begin{remark}
Even though equation \eqref{eq:wm-cauchy} is invariant by Lorentz transformations, the field $\phi$ is not.
This fact is not surprising if we recall the physical motivation provided in Section~\ref{ssec:setting},
which was based on \emph{non-relativistic} thermodynamics.
It could be interesting to consider a relativistic version of our problem,
a task we did not strive for due to our insufficient understanding of the underlying physical theories.
\end{remark}
\begin{remark}
A natural open problem is to prove or disprove the uniqueness
of the probability distribution of a random field solving \eqref{eq:wm-cauchy}
in the sense of Definition~\ref{def:solution}
(or according to some more restrictive notion of solution).
\end{remark}
\begin{remark}
Applying the L\'evy Modulus of Continuity Theorem to the boundary data $(\phi_+, \phi_-)$,
we deduce that the estimate \eqref{eq:modulus} is optimal,
in the sense that it would fail if $h$ was replaced by a function
$\wt h$ such that $\wt h(\rho) \ll h(\rho)$ as $\rho \to 0^+$.
In order to only prove the existence of a solution $\phi$,
it would be sufficient to consider, in Lemma~\ref{lem:main-lemma} above, $h(\rho) := \rho^\alpha$ for any $\alpha < \frac 12$.
\end{remark}

The rest of this section is devoted to a proof of Theorem~\ref{thm:main}.
Our strategy is to show that the sequence of random variables $\Phi_N(\phi_+, \phi_-): \Omega \to C([0, \infty)^2)$ is tight.
We have the following pre-compactness criterion in $C([0, \infty)^2)$.
\begin{lemma}
\label{lem:pre-compact}
Let $\rho: [0, \infty) \to [0, \infty)$ be a continuous function such that $\rho(0) = 0$.
For any sequence $(A_L)_L \geq 0$ the set of $\psi \in C([0, \infty)^2)$ such that
\begin{equation}
\label{eq:pre-compact-1}
\sup_{0 \leq u, v < \infty}|\psi(u, v)| \leq 1
\end{equation}
and
\begin{equation}
\label{eq:pre-compact-2}
\sup_{\substack{0\leq u, v, \sh u, \sh v \leq 2^L \\ (u, v) \neq (\sh u, \sh v)}}\frac{|\psi(\sh u, \sh v) - \psi(u, v)|}{\rho(|\sh u - u| + |\sh v - v|)} \leq A_L, \qquad\text{for all }L \in \bN,
\end{equation}
is compact in $C([0, \infty)^2)$.
\end{lemma}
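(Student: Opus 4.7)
The plan is to reduce to the classical Arzel\`a--Ascoli theorem on each compact square $[0, 2^L]^2$ and then combine the results via a diagonal extraction. Since $C([0,\infty)^2)$, endowed with the topology of uniform convergence on compact sets, is a separable Fr\'echet space (as recalled in Section~\ref{ssec:notation}) and hence metrizable, compactness and sequential compactness coincide. It therefore suffices to show that every sequence $(\psi_n)_n$ belonging to the described set admits a subsequence converging in $C([0,\infty)^2)$, together with checking that the limit still satisfies \eqref{eq:pre-compact-1} and \eqref{eq:pre-compact-2}.

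First I would fix $L \in \bN$ and consider the restrictions $\psi_n\vert_{[0, 2^L]^2}$. By \eqref{eq:pre-compact-1} these are uniformly bounded in $C([0, 2^L]^2)$. By \eqref{eq:pre-compact-2} together with the continuity of $\rho$ at $0$ and the condition $\rho(0) = 0$, given $\varepsilon > 0$ one can choose $\delta > 0$ such that $A_L\, \rho(r) < \varepsilon$ whenever $r < \delta$, and then $|\psi_n(\sh u, \sh v) - \psi_n(u, v)| < \varepsilon$ as soon as $|\sh u - u| + |\sh v - v| < \delta$, uniformly in $n$. This yields equicontinuity of the family on $[0, 2^L]^2$, and Arzel\`a--Ascoli provides a subsequence converging uniformly on $[0, 2^L]^2$ to a continuous limit.

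A standard diagonal extraction across $L \in \bN$ then yields a single subsequence $(\psi_{n_k})_k$ and a continuous function $\psi : [0,\infty)^2 \to \bR^{d+1}$ to which $(\psi_{n_k})_k$ converges uniformly on every $[0, 2^L]^2$; this is precisely convergence in $C([0,\infty)^2)$. It remains to verify that $\psi$ satisfies \eqref{eq:pre-compact-1} and \eqref{eq:pre-compact-2}. Both are pointwise inequalities, so they pass to the limit: the uniform bound $|\psi| \leq 1$ follows from pointwise convergence, and for any fixed $L$ and any quadruple $(u, v, \sh u, \sh v) \in [0, 2^L]^4$ with $(u, v) \neq (\sh u, \sh v)$, the inequality $|\psi_{n_k}(\sh u,\sh v) - \psi_{n_k}(u,v)| \leq A_L\,\rho(|\sh u - u|+|\sh v - v|)$ passes to the limit $k \to \infty$; taking the supremum over all such quadruples yields \eqref{eq:pre-compact-2} for $\psi$. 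There is no real obstacle in this argument; the only mildly delicate point is the diagonal extraction, which is a standard device for passing from compactness on each $[0, 2^L]^2$ to compactness in the Fr\'echet space $C([0,\infty)^2)$.
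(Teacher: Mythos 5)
Your proof is correct and takes exactly the route the paper intends: metrizability of $C([0,\infty)^2)$ reduces compactness to sequential compactness, Arzel\`a--Ascoli on each square $[0,2^L]^2$ (with equicontinuity coming from \eqref{eq:pre-compact-2} and $\rho(0)=0$) plus a diagonal extraction gives a convergent subsequence, and the defining inequalities pass to the limit so the set is closed. The paper's own proof is just a one-line appeal to these same facts, so your write-up is simply a fleshed-out version of it.
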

\begin{proof}
Since the space $C([0,\infty)^2)$ is metrizable, we can use the sequential definition
of compactness, and it suffices to apply the Arzela-Ascoli Theorem.
\end{proof}

\begin{proof}[Proof of Theorem~\ref{thm:main}]
\textbf{Step 1.}
For all $N \in \bN$, let $\phi_N:= \Phi_N(\phi_+, \phi_-): \Omega \times [0, \infty)^2 \to \bR^{d+1}$.
We claim that for all $L \in \bN$, $N \in \{4, 5, \ldots\}$ and $A > 0$ we have
\begin{equation}
\label{eq:cont-mod-phiN}
\begin{aligned}
\bP\bigg(\bigg\{\sup_{\substack{0 \leq u, v, \sh u, \sh v \leq 2^{L} \\ (u, v) \neq (\sh u, \sh v)}}\frac{|\phi_N(\sh u, \sh v) - \phi_N(u, v)|}{h(|(\sh u-u, \sh v - v)|)} \geq 64A\bigg\} \bigg)\leq C4^{L-A^2 / 8}.
\end{aligned}
\end{equation}
Let $Y(m, n) := \phi_N(2^{-N}m, 2^{-N}n)$ for all $0 \leq m, n \leq 2^{L+N}$ and let $\wt Y: [0, 2^{L+N}]^2 \to \bR^{d+1}$
be its extension, so that $\phi_N(u, v) = \wt Y(2^N u, 2^N v)$ for all $0 \leq u, v \leq 2^L$.
It suffices to prove that the event whose probability is taken in \eqref{eq:cont-mod-phiN}
is contained in the event defined in \eqref{eq:cont-mod-ineq}.
Assume that this last event does not hold.

By rescaling $u$ and $v$, we have
\begin{equation}
\sup_{\substack{0 \leq u, v, \sh u, \sh v \leq 2^{L} \\ (u, v) \neq (\sh u, \sh v)}}\frac{|\phi_N(\sh u, \sh v) - \phi_N(u, v)|}{h(|(\sh u-u, \sh v - v)|)} =
\sup_{\substack{0 \leq u, v, \sh u, \sh v \leq 2^{L+N} \\ (u, v) \neq (\sh u, \sh v)}}\frac{|\wt Y(\sh u, \sh v) - \wt Y(u, v)|}{h(2^{-N}|(\sh u-u, \sh v - v)|)}.
\end{equation}
Let $0 \leq u, v, \sh u, \sh v \leq 2^{L+N}$ such that $(u, v) \neq (\sh u, \sh v)$ and let $r := |(\sh u - u, \sh v - v)|$.
If $r \geq 2\sqrt 2$, then \eqref{eq:ext-cont} yields existence of $m, n, \sh m, \sh n$ such that $|\sh m - m| + |\sh n - n| \leq 2r$ and $|\wt Y(\sh u, \sh v) - \wt Y(u, v)| \leq |Y(\sh m, \sh n) - Y(m, n)|$.
Since the event in \eqref{eq:cont-mod-ineq} does not hold, we have
\begin{equation}
|Y(\sh m, \sh n) - Y(m, n)| < 32Ah(2r2^{-N}) \leq 64Ah(2^{-N}r),
\end{equation}
where in the last step we use the inequality $h(2\rho) \leq 2h(\rho)$ for all $\rho > 0$.

Let now $0 < r \leq 2\sqrt 2$. Since the event in \eqref{eq:cont-mod-ineq} does not hold, \eqref{eq:ext-lip} yields
\begin{equation}
\begin{aligned}
|\wt Y(\sh u, \sh v) - \wt Y(u, v)| &\leq \|\partial_u \wt Y\|_{L^\infty}|\sh u - u| + \|\partial_v \wt Y\|_{L^\infty}|\sh v - v| \\
&< 8Ah(2^{-N})(|\sh u - u| + |\sh v - v|) = 8\sqrt 2 Ar h(2^{-N}).
\end{aligned}
\end{equation}
The function $\rho \mapsto \rho^{-1}h(\rho)$ is decreasing, hence
\begin{equation}
\begin{aligned}
2^Nr^{-1}h(2^{-N}r) &\geq 2^N(2\sqrt 2)^{-1}h(2^{-N+1}\sqrt 2) = 2^{\frac N2 - \frac 34}\sqrt{(N-\frac 32)\log 2} \\
& = 2^{-\frac 34}\sqrt{1 - \frac{3}{2N}}2^{\frac N2}\sqrt{N\log 2} \geq \frac 14 2^Nh(2^{-N}),
\end{aligned}
\end{equation}
and we obtain
\begin{equation}
|\wt Y(\sh u, \sh v) - \wt Y(u, v)| \leq 32\sqrt 2 Ah(2^{-N}r) < 64Ah(2^{-N}r).
\end{equation}

\noindent
\textbf{Step 2.}
By Lemma~\ref{lem:pre-compact}, Step 1 and the Prokhorov Theorem, there exists an increasing sequence $(N_k)_k$
and a random field $\phi: \Omega' \to C([0, \infty)^2)$ such that $\phi_{N_k} \to \phi$ in distribution.
For any dyadic $u$ and $v$, we have $\phi_{N_k}(u, v) \in \bS^d$ for all $k$ large enough, hence $\phi(u, v) \in \bS^d$
with probability 1. Since $\phi$ is continuous with probability 1, we deduce that $\phi: \Omega' \to X([0, \infty)^2)$.
Thus, $\phi$ is a solution in the sense of Definition~\ref{def:solution}
(of course, the formulation of the latter is designed precisely in order to make this step trivial).
We verify that $\phi$ has all the desired properties.

Since the Borel $\sigma$-algebra of $C([0, \infty)^2)$ is generated by the cylindrical sets,
see for example \cite[Section I.1.2]{Vakhania+T+Ch_1987},
property \ref{it:invariance} will follow if we can prove that for all $J \in \bN$ and
$(u_j, v_j)_{j=0}^J$, the random sequences
\begin{equation}
\big(\phi(u_0 + u_j, v_0 + v_j)\big)_{j=1}^J, \qquad \big(\phi(u_j, v_j)\big)_{j=1}^J
\end{equation}
have the same laws.
If all $u_j$ and $v_j$ are dyadic, then the claim follows from Corollary~\ref{cor:discr-inv}.
In the general case, let $(u_{j, l})_l$ and $(v_{j, l})_l$ be sequences of dyadic numbers such that
$\lim_{l\to \infty}u_{j, l} = u_j$ and $\lim_{l\to \infty}v_{j, l} = v_j$ for all $j \in \llbracket 0, J\rrbracket$.
Since $\phi$ is continuous with probability 1, for all $j \in \llbracket 1, J\rrbracket$ we have, with probability 1 hence in distribution,
\begin{gather}
\lim_{l \to \infty} \phi(u_{0, l} + u_{j, l}, v_{0, l} + v_{j, l}) = \phi(u_0 + u_j, v_0 + v_j), \\
\lim_{l \to \infty} \phi(u_{j, l}, v_{j, l}) = \phi(u_j, v_j),
\end{gather}
and we already know that the laws of the sequences on the left are the same for every $l$.

Finally, we check \eqref{eq:modulus}. By Step 1, for all $L \in \bN$, $A \geq 0$ and $k$ we have
\begin{equation}
\begin{aligned}
\bP\bigg(\bigg\{\sup_{\substack{0 \leq u, v, \sh u, \sh v \leq 2^{L} \\ (u, v) \neq (\sh u, \sh v)}}\frac{|\phi_{N_k}(\sh u, \sh v) - \phi_{N_k}(u, v)|}{h(|(\sh u-u, \sh v - v)|)} \geq 64A\bigg\} \bigg)\leq C4^{L-A^2 / 8}.
\end{aligned}
\end{equation}
Passing to the limit in distribution, we obtain
\begin{equation}
\begin{aligned}
\bP\bigg(\bigg\{\sup_{\substack{0 \leq u, v, \sh u, \sh v \leq 2^{L} \\ (u, v) \neq (\sh u, \sh v)}}\frac{|\phi(\sh u, \sh v) - \phi(u, v)|}{h(|(\sh u-u, \sh v - v)|)} \geq 64A\bigg\} \bigg)\leq C4^{L-A^2 / 8}.
\end{aligned}
\end{equation}
\end{proof}

\subsection{Link with the Gibbs distribution}
\label{ssec:gibbs}
We finish this section by indicating the relationship between the random field constructed in Theorem~\ref{thm:main},
and the Gibbs distribution associated with the equation, given by \eqref{eq:gibbs}.
As we mentioned in the Introduction, our argument is voluntarily non-rigorous.

Let $(u_1, v_1), (u_2, v_2) \in (0, \infty)^2$ with $u_1 < u_2$ and $v_1 > v_2$.
Letting $(u_1^{(N)}, v_1^{(N)})$ and $(u_2^{(N)}, v_2^{(N)})$ be sequences of pairs of dyadic numbers
with denominator $2^N$, converging to $(u_1, v_1)$ and $(u_2, v_2)$ respectively, we have
\begin{align}
\partial_u\phi(u_1, v_1) &\simeq 2^N\Big(\phi_N\big(u_1^{(N)}, v_1^{(N)}\big) - \phi_N\Big(u_1^{(N)}- \frac{1}{2^N}, v_1^{(N)}\Big)\Big), \\
\partial_v\phi(u_1, v_1) &\simeq 2^N\Big(\phi_N\big(u_1^{(N)}, v_1^{(N)}\big) - \phi_N\Big(u_1^{(N)}, v_1^{(N)}- \frac{1}{2^N}\Big)\Big), \\
\partial_u\phi(u_2, v_2) &\simeq 2^N\Big(\phi_N\big(u_2^{(N)}, v_2^{(N)}\big) - \phi_N\Big(u_2^{(N)}- \frac{1}{2^N}, v_2^{(N)}\Big)\Big), \\
\partial_v\phi(u_2, v_2) &\simeq 2^N\Big(\phi_N\big(u_2^{(N)}, v_2^{(N)}\big) - \phi_N\Big(u_2^{(N)}, v_2^{(N)}- \frac{1}{2^N}\Big)\Big).
\end{align}
By Lemma~\ref{lem:reflections}, the right had sides are pairwise independent and approach standard Gaussians
on the planes tangent to $\bS^d$ at $\phi(u_1, v_1)$ and $\phi(u_2, v_2)$.

We return to the variables $(t, x)$ by setting $\psi(t, x) := \phi(t+x, t-x)$.
Since
\begin{align}
\partial_t \psi(t, x) &= \partial_u \psi(t+x, t-x) + \partial_v \phi(t+x, t-x), \\
\partial_x \psi(t, x) &= \partial_u \psi(t+x, t-x) - \partial_v \phi(t+x, t-x),
\end{align}
we infer that $\partial_t\psi(t, x)$ and $\partial_x \psi(t, x)$ are independent Gaussians with variance $2$
on the plane tangent to $\bS^d$ at $\psi(t, x)$. Moreover, if $(t_1, x_1)$ and $(t_2, x_2)$ are such that
$0 < t_1 + x_1 < t_2 + x_2$ and $t_1 - x_1 > t_2 - x_2 > 0$, then $\partial_t\psi(t_1, x_1)$, $\partial_x \psi(t_1, x_1)$,
$\partial_t\psi(t_2, x_2)$ and $\partial_x \psi(t_2, x_2)$ are pairwise independent.
In particular, this is true for fixed $t_1 = t_2 = T$ and $-T < x_1 < x_2 < T$.

Hence, we obtain that, for fixed $T > 0$,
\begin{equation}
(-T, T) \owns x \mapsto \partial_t\psi(T, x), \qquad (-T, T) \owns x \mapsto\partial_x \psi(T, x)
\end{equation}
are independent white noises, which corresponds to the probability density given by \eqref{eq:gibbs}.

\appendix

\section{Results from Lebesgue Theory}
\begin{lemma}
\label{lem:exist-partial}
Let $u_0, v_0 > 0$, $f \in L^1([0, u_0] \times [0, v_0])$
and let $\phi$ be defined by
\begin{equation}
\phi(u, v) := \int_0^u\int_0^v f(w, z)\ud z\ud w.
\end{equation}
Then there exists a set $A$ of measure $0$ such that $\partial_u\phi(u, v)$
exists for all $(u, v) \in ([0, u_0] \setminus A)\times [0, v_0]$ and
\begin{align}
\partial_u \phi(u, v) = \int_0^v f(u, z)\ud z,
\end{align}
and a set $B$ of measure $0$ such that $\partial_v\phi(u, v)$ exists
for all $(u, v) \in [0, u_0] \times ([0, v_0] \setminus B)$ and
\begin{align}
\partial_v \phi(u, v) = \int_0^u f(w, v)\ud w.
\end{align}
\end{lemma}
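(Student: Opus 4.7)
The plan is to prove the statement for $\partial_u\phi$, the statement for $\partial_v\phi$ being obtained by swapping the roles of the two variables. The crucial point, which distinguishes this from a routine application of the one-dimensional Lebesgue differentiation theorem, is that the exceptional set $A$ must be independent of $v$.

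First I would set $F(u, v) := \int_0^v f(u, z) \ud z$, which by Fubini is well defined for $u$ outside a null set and all $v \in [0, v_0]$, and satisfies $\phi(u, v) = \int_0^u F(w, v) \ud w$. The key step is to regard $u \mapsto f(u, \cdot)$ as a map from $[0, u_0]$ into the separable Banach space $L^1([0, v_0])$. By Fubini, this map is Bochner integrable with Bochner norm equal to $\|f\|_{L^1([0, u_0] \times [0, v_0])}$. Applying the vector-valued Lebesgue point theorem (valid in any separable Banach space), I obtain a null set $A \subset [0, u_0]$ such that for every $u \in [0, u_0] \setminus A$ the function $f(u, \cdot)$ lies in $L^1([0, v_0])$ and
\begin{equation*}
\lim_{h \to 0}\frac{1}{|h|}\int_u^{u+h} \|f(w, \cdot) - f(u, \cdot)\|_{L^1([0, v_0])}\ud w = 0.
\end{equation*}

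For such $u$ and any $v \in [0, v_0]$, I would then estimate
\begin{equation*}
\Big|\frac{\phi(u+h, v) - \phi(u, v)}{h} - F(u, v)\Big| \leq \frac{1}{|h|}\int_u^{u+h}|F(w, v) - F(u, v)|\ud w \leq \frac{1}{|h|}\int_u^{u+h}\|f(w, \cdot) - f(u, \cdot)\|_{L^1([0, v_0])}\ud w,
\end{equation*}
using that $|F(w, v) - F(u, v)| \leq \int_0^v |f(w, z) - f(u, z)|\ud z \leq \|f(w, \cdot) - f(u, \cdot)\|_{L^1([0, v_0])}$. The right hand side tends to $0$ as $h \to 0$ by the choice of $A$, so $\partial_u\phi(u, v)$ exists and equals $F(u, v) = \int_0^v f(u, z)\ud z$ for every $v \in [0, v_0]$ simultaneously.

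The only real obstacle is the uniformity in $v$: the scalar Lebesgue differentiation theorem applied to each $v$ separately produces an exceptional set $A_v$ that a priori depends on $v$, and a naive union over all $v$ is uncountable. Invoking the Lebesgue point theorem for $L^1([0, v_0])$-valued Bochner integrable functions bypasses this problem; equivalently, one could approximate $f$ in $L^1$ by continuous functions and exploit the Hardy--Littlewood maximal inequality on $[0, u_0]$ applied to the scalar function $w \mapsto \|f(w, \cdot) - f_\epsilon(w, \cdot)\|_{L^1([0, v_0])}$, which is the standard proof of the vector-valued result.
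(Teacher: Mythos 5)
Your proof is correct and is essentially the same argument as the paper's: the key point in both is to control the difference quotient uniformly in $v$ by the scalar quantity $\|f(w,\cdot)-f(u,\cdot)\|_{L^1([0,v_0])}$ and then use a maximal-function/approximation argument in the $u$ variable to get a $v$-independent exceptional set. The only difference is packaging: you invoke the Lebesgue point theorem for $L^1([0,v_0])$-valued Bochner integrable functions as a known result, whereas the paper proves exactly that statement inline (via smooth approximation and the weak $L^1$ bound for the Hardy--Littlewood maximal function), as you yourself note in your closing remark.
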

\begin{proof}
We give a proof of existence of $A$. Existence of $B$ is obtained analogously.

By the Fubini's Theorem, there exists a set $A_0$ of measure $0$ such that
$f(u, \cdot) \in L^1([0, v_0])$ for all $u \in [0, u_0]\setminus A_0$,
so that for all $(u, v) \in ([0, u_0] \setminus A_0)\times [0, v_0]$ we can define
\begin{equation}
\psi(u, v) := \int_0^v f(u, z)\ud z.
\end{equation}
For all $n \in \bN^*$, let
\begin{equation}
A_n := \bigg\{u \in (0, u_0) \setminus A_0: \limsup_{h \to 0}\sup_{v \in [0, v_0]}
\Big|\frac{\phi(u+h, v) - \phi(u, v)}{h} - \psi(u, v)\Big| \geq \frac 1n\bigg\}.
\end{equation}

We claim that $A_n$ is of measure $0$.
In order to prove this, let $f_m$ be a sequence of smooth functions on $[0, u_0]\times [0, v_0]$
such that for $g_m := f - f_m$ we have $\lim_{m\to\infty}\|g_m\|_{L^1} = 0$, and set
\begin{align}
\phi_m(u, v) &:= \int_0^u\int_0^v f_m(w, z)\ud z\ud w, \\
\psi_m(u, v) &:= \int_0^v f_m(u, z)\ud z.
\end{align}
For all $u\in (0, u_0)$ we have
\begin{equation}
\limsup_{h \to 0}\sup_{v \in [0, v_0]}\Big|\frac{\phi_m(u+h, v) - \phi_m(u, v)}{h} - \psi_m(u, v)\Big| = 0,
\end{equation}
thus for all $u \in A_n$ we obtain
\begin{equation}
\begin{aligned}
\sup_{0 < |h| < \min(u, u_0-u)}\sup_{v \in [0, v_0]}&\Big|\frac{\phi(u+h, v) - \phi(u, v)}{h} - \psi(u, v)\\
&- \frac{\phi_m(u+h, v) - \phi_m(u, v)}{h} + \psi_m(u, v)\Big| \geq \frac 1n.
\end{aligned}
\end{equation}
Now observe that for all $(h, v)$ as above we have
\begin{equation}
\begin{aligned}
\Big|\frac{\phi(u+h, v) - \phi(u, v)}{h} - \frac{\phi_m(u+h, v) - \phi_m(u, v)}{h}\Big|
&= \frac 1h \bigg|\int_u^{u+h}\int_0^v g_m(w, z)\ud z\ud w\bigg| \\
&\qquad \frac 1h\bigg|\int_u^{u+h} G_m(z)\ud z\bigg|,
\end{aligned}
\end{equation}
where
\begin{equation}
G_m(u) := \int_0^{v_0}|g_m(u, z)|\ud z, \qquad \|G_m\|_{L_u^1} = \|g_m\|_{L_{u, v}^1}.
\end{equation}
By the weak $L^1$ bound of the Hardy-Littlewood maximal function,
see for example \cite[Chapter 2.3]{Muscalu-Schlag}, we have $|A_n| \leq \frac{6\|g_m\|_{L^1}}{n}$. Letting $m \to \infty$, we obtain $|A_n| = 0$.

It now suffices to set $A := A_0 \cup A_1 \cup A_2 \cup \ldots$.
\end{proof}

\begin{lemma}
\label{lem:deriv-g+}
Let $u_0, v_0 > 0$, $f \in L^1([0, u_0]\times [0, v_0])$ and let $g_+$ be defined by
\begin{equation}
g_+(u, v) := \int_0^v f(u, z)\ud z
\end{equation}
whenever the last expression is defined.
Then there exists a set $C \subset [0, u_0]\times [0, v_0]$ of measure $0$
such that for all $(u, v) \in [0, u_0]\times [0, v_0] \setminus C$,
$\partial_v g_+(u, v)$ exists and
\begin{equation}
\partial_v g_+(u, v) = f(u, v).
\end{equation}
\end{lemma}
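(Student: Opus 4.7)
The statement is essentially the Lebesgue Differentiation Theorem applied in the variable $v$ with the parameter $u$, but with the subtle twist that a single exceptional null set must be produced in the product $[0,u_0]\times[0,v_0]$, not merely a $u$-dependent null set. The plan is to combine the classical one-variable differentiation theorem with Fubini--Tonelli.

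First, by Fubini's Theorem, there exists a null set $A_0 \subset [0, u_0]$ such that for every $u \in [0, u_0]\setminus A_0$ one has $f(u,\cdot) \in L^1([0, v_0])$; in particular $g_+(u,v)$ is defined for all such $u$ and all $v \in [0, v_0]$, and $v \mapsto g_+(u,v)$ is absolutely continuous (hence continuous). Next, for each $u \in [0, u_0] \setminus A_0$, the standard Lebesgue Differentiation Theorem applied to $f(u,\cdot)$ yields a null set $B_u \subset [0, v_0]$ such that $\partial_v g_+(u,v) = f(u,v)$ for every $v \in [0, v_0] \setminus B_u$. My candidate for the exceptional set is
\begin{equation}
C := \big(A_0 \times [0, v_0]\big) \cup \big\{(u, v) \in ([0, u_0]\setminus A_0)\times[0, v_0] : v \in B_u\big\}.
\end{equation}

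The main (mild) obstacle is to justify that $C$ is Lebesgue measurable and has two-dimensional measure $0$, since the $B_u$ depend on $u$ in an uncontrolled way. I would handle this by writing $C \subset A_0 \times [0, v_0] \cup \bigcup_{n \in \bN^\ast} C_n$, where
\begin{equation}
C_n := \Big\{(u, v) : u \notin A_0,\ \limsup_{h \to 0,\, h \in \bQ}\Big|\frac{g_+(u, v+h) - g_+(u, v)}{h} - f(u, v)\Big| \geq \frac 1n\Big\}.
\end{equation}
Restricting $h$ to rationals is harmless because $g_+(u,\cdot)$ is continuous for $u \notin A_0$. Each $C_n$ is then the sublevel set of a Borel function of $(u,v)$ (a countable $\limsup$ of measurable functions), hence measurable. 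For fixed $u \notin A_0$, the slice $\{v : (u,v) \in C_n\}$ is contained in $B_u$, so has measure zero; by Tonelli, $|C_n| = 0$, and therefore $|C| = 0$. By construction, for every $(u,v) \notin C$ we have $u \notin A_0$ and $v \notin B_u$, so $\partial_v g_+(u,v)$ exists and equals $f(u,v)$.

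An alternative, more self-contained route would be to imitate verbatim the maximal function argument used in the proof of Lemma~\ref{lem:exist-partial}: approximate $f$ by smooth $f_m$ with $\|f - f_m\|_{L^1} \to 0$, define
\begin{equation}
C_n := \Big\{(u,v) : \limsup_{h\to 0}\Big|\frac{g_+(u, v+h) - g_+(u,v)}{h} - f(u,v)\Big| \geq \frac 1n\Big\},
\end{equation}
and bound $|C_n|$ by $\tfrac{6}{n}\|f - f_m\|_{L^1}$ via the Hardy--Littlewood weak-type $(1,1)$ inequality applied in $v$ for each fixed $u$ followed by integration in $u$. Letting $m \to \infty$ gives $|C_n| = 0$, and $C := \bigcup_n C_n$ has the required properties. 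Either approach works; I would favor the Fubini version for brevity.
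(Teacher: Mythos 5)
Your first (Fubini) argument is exactly the paper's proof: define $C$ via the slice-wise exceptional sets of the one-variable Lebesgue Differentiation Theorem, check measurability, and conclude by Fubini. In fact your justification of the measurability of $C$ (restricting $h$ to rationals, using continuity of $g_+(u,\cdot)$ for $u\notin A_0$) is more explicit than the paper's, which simply asserts it; the proposal is correct.
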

\begin{proof}
Let $A_0$ be defined as in the previous proof, so that for all $u \in [0, u_0] \setminus A_0$,
the function $g_+(u, \cdot)$ is well-defined and continuous.
We define
\begin{equation}
\begin{aligned}
C &:= \big([0, u_0] \times \{0, v_0\}\big) \cup A_0 \times [0, v_0]\  \cup \\
&\cup \Big\{(u, v) \in ([0, u_0]\setminus A_0) \times (0, v_0):
\limsup_{h \to 0}\Big|\frac{g_+(u, v+h) - g_+(u, v)}{h} - f(u, v)\Big| > 0\Big\}.
\end{aligned}
\end{equation}
Then $C$ is a measurable set and, by the Lebesgue Differentiation Theorem,
for all $u \in [0, u_0] \setminus A_0$ the set $C_u := \{v \in (0, v_0): (u, v) \in C\}$
is of measure $0$. By Fubini's Theorem, the set $C$ is of measure $0$.
\end{proof}
\begin{remark}
One can get an analogous result by interchanging the roles of $u$ and $v$.
\end{remark}
\begin{lemma}
\label{lem:preserved-length}
Let $u_0 > 0$, $f \in L^1([0, u_0]; \bR^{d+1})$, $g_0 \in \bR^{d+1}$ and for all $u \in [0, u_0]$
\begin{equation}
\label{eq:g-int-f}
g(u) = g_0 + \int_0^u f(w) \ud w.
\end{equation}
Assume that $g(u)\cdot f(u) = 0$ for almost all $u \in [0, u_0]$. Then
\begin{equation}
\label{eq:preserved-length}
|g(u_0)| = |g_0|.
\end{equation}
\end{lemma}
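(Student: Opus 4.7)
The plan is to reduce the claim to the fundamental theorem of calculus applied to the scalar absolutely continuous function $u \mapsto |g(u)|^2$, whose distributional derivative vanishes by the orthogonality assumption $g\cdot f = 0$. Concretely, I would proceed as follows.

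First, I would note that \eqref{eq:g-int-f} exhibits $g$ as the sum of a constant and the indefinite integral of an $L^1$ function, so each component of $g$ is absolutely continuous on $[0, u_0]$ and $g'(u) = f(u)$ for almost every $u$. In particular, $g$ is bounded on $[0, u_0]$. Next, I would invoke the standard fact that the product of two absolutely continuous functions on a compact interval is again absolutely continuous, and that the usual product rule holds almost everywhere. Applying this component-wise to $|g|^2 = \sum_{i} g_i^2$, the function $u \mapsto |g(u)|^2$ is absolutely continuous on $[0, u_0]$, with
\begin{equation}
\frac{\mathrm{d}}{\mathrm{d}u}|g(u)|^2 = 2\, g(u)\cdot g'(u) = 2\, g(u)\cdot f(u) \quad \text{for a.e.\ } u \in [0, u_0].
\end{equation}

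Then the fundamental theorem of calculus for absolutely continuous functions gives
\begin{equation}
|g(u_0)|^2 - |g_0|^2 = \int_0^{u_0} 2\, g(u)\cdot f(u)\, \mathrm{d}u = 0,
\end{equation}
where the last equality uses the hypothesis that $g(u)\cdot f(u) = 0$ for almost every $u$. Taking square roots yields \eqref{eq:preserved-length}.

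No genuine obstacle is anticipated: the only point requiring any care is the justification of the product rule for $|g|^2$, which follows from the AC-times-AC product formula applied to bounded factors. If one preferred to avoid even that small appeal, an equivalent route is to approximate $f$ in $L^1$ by continuous functions $f_n$, observe that the resulting $g_n$ converge uniformly to $g$ and that $g_n \cdot f_n \to g \cdot f$ in $L^1$, integrate the identity $\frac{\mathrm{d}}{\mathrm{d}u}|g_n(u)|^2 = 2 g_n(u) \cdot f_n(u)$ which holds classically for continuous $f_n$, and pass to the limit; but the first approach is shorter and is the one I would record.
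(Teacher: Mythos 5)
Your proof is correct and follows essentially the same route as the paper's: both establish $g' = f$ a.e.\ via the Lebesgue Differentiation Theorem, note that $|g|^2$ is absolutely continuous as a product of absolutely continuous functions, apply the product rule to get $\frac{\vd}{\vd u}|g|^2 = 2g\cdot f = 0$ a.e., and conclude by the fundamental theorem of calculus. No issues.
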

\begin{proof}
By the Lebesgue Differentiation Theorem, $g'(u) = f(u)$ for almost all $u \in [0, u_0]$.
Thus, for almost all $u \in [0, u_0]$ we have
\begin{equation}
\dd u |g(u)|^2 = 2 g(u)\cdot g'(u) = 2g(u)\cdot f(u) = 0.
\end{equation}
Since $|g|^2$ is absolutely continuous (as a product of two absolutely continuous functions),
we obtain \eqref{eq:preserved-length}.
\end{proof}
\begin{lemma}
\label{lem:L1-lemma}
Let $x_0 > 0$.
For $N \in \bN^*$, let $T_N: L^1([0, x_0)) \to L^1([0, x_0))$ be given by
\begin{equation}
\label{eq:TN-def}
\begin{aligned}
(T_N f)(x) := f(x) - \frac{2^N}{x_0}\int_{\frac{nx_0}{2^{N}}}^{\frac{(n+1)x_0}{2^{N}}}f(t)\ud t,
\end{aligned}
\end{equation}
for all $f \in L^1([0, x_0))$, $n \in \llbracket 0, 2^N-1\rrbracket$ and $x \in \big[\frac{nx_0}{2^{N}}, \frac{(n+1)x_0}{2^{N}}\big)$. Then $\|T_N\|_{\scrL(L^1)} \leq 2$ for all $N$
and $\lim_{N \to \infty}\|T_N f\|_{L^1} = 0$ for all $f \in L^1([0, x_0))$.
\end{lemma}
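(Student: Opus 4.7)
The plan is to observe that $T_N = I - A_N$, where $A_N$ is the averaging (conditional expectation) operator
\begin{equation}
(A_N f)(x) := \frac{2^N}{x_0}\int_{\frac{nx_0}{2^N}}^{\frac{(n+1)x_0}{2^N}} f(t)\ud t, \qquad x \in \Big[\tfrac{nx_0}{2^N}, \tfrac{(n+1)x_0}{2^N}\Big).
\end{equation}
First I would establish the uniform bound: a direct computation gives
\begin{equation}
\|A_N f\|_{L^1} = \sum_{n=0}^{2^N-1}\bigg|\int_{\frac{nx_0}{2^N}}^{\frac{(n+1)x_0}{2^N}} f(t)\ud t\bigg| \leq \sum_{n=0}^{2^N-1}\int_{\frac{nx_0}{2^N}}^{\frac{(n+1)x_0}{2^N}} |f(t)|\ud t = \|f\|_{L^1},
\end{equation}
so $\|A_N\|_{\scrL(L^1)} \leq 1$ and by the triangle inequality $\|T_N\|_{\scrL(L^1)} \leq 1 + 1 = 2$.

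Next I would prove the convergence $\|T_N f\|_{L^1} \to 0$ first for $f$ continuous on $[0, x_0]$. Such an $f$ is uniformly continuous, with some modulus $\omega_f$. For $x \in [nx_0/2^N, (n+1)x_0/2^N)$, the value $(A_N f)(x)$ is an average of values of $f$ within distance $x_0/2^N$ from $x$, hence $|(T_N f)(x)| \leq \omega_f(x_0/2^N)$, and integrating yields $\|T_N f\|_{L^1} \leq x_0\, \omega_f(x_0/2^N) \to 0$.

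The final step is a standard density argument. Given $f \in L^1([0, x_0))$ and $\varepsilon > 0$, pick a continuous $g$ with $\|f - g\|_{L^1} < \varepsilon$. The uniform bound then gives
\begin{equation}
\|T_N f\|_{L^1} \leq \|T_N(f - g)\|_{L^1} + \|T_N g\|_{L^1} \leq 2\varepsilon + \|T_N g\|_{L^1},
\end{equation}
so $\limsup_{N \to \infty}\|T_N f\|_{L^1} \leq 2\varepsilon$, and letting $\varepsilon \to 0$ finishes the proof. There is no real obstacle here; the only mild point to be careful about is that $L^1$ itself does not admit a direct pointwise control, which is precisely why the density step and the uniform bound on $\|T_N\|_{\scrL(L^1)}$ are both needed.
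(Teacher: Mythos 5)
Your proof is correct and follows essentially the same route as the paper: establish the uniform bound $\|T_N\|_{\scrL(L^1)} \leq 2$ and then conclude by density (the paper uses smooth functions where you use continuous ones, which makes no difference). You simply spell out the details that the paper declares ``immediate''.
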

\begin{proof}
It is clear that $T_N$ is a linear operator and $\|T_N\|_{\scrL(L^1)} \leq 2$.
Thus, it suffices to prove that $T_N f \to 0$ in $L^1([0, x_0))$
for all smooth $f$, which is immediate.
\end{proof}

\providecommand{\noopsort}[1]{}

\end{document}